\definecolor{lightgray}{gray}{0.75}
\providecommand{\mathcol}[1]{\text{\colorbox{lightgray}{$#1$}}}
\newtheorem{theorem}{Theorem}
\newtheorem{lemma}[theorem]{Lemma}
\theoremstyle{definition}
\newtheorem{definition}[theorem]{Definition}
\newtheorem{example}[theorem]{Example}
\newtheorem{remark}[theorem]{Remark}
\newtheorem{problem}{Problem}
\newcommand{\beq}{\begin{equation}}
\newcommand{\eeq}{\end{equation}}
\newcommand{\barr}[1]{\begin{array}{#1}}
\newcommand{\earr}{\end{array}}
\providecommand{\lp}[0]{\mathscr{L}\!\!-\!\!\mathscr{P}}
\providecommand{\Z}[0]{\mathbb{Z}}
\providecommand{\R}[0]{\mathbb{R}}
\providecommand{\N}[0]{\mathbb{N}}
\newcommand{\lr}[1]{\left(#1\right)}
\begin{document}

\title{Operator~Diagonalizations of~Multiplier~Sequences}


\author{Robert D. Bates}



\date{\today}

\maketitle

\begin{abstract}
We consider hyperbolicity preserving operators with respect to a new linear operator representation on $\R[x]$. In essence, we demonstrate that every Hermite and Laguerre multiplier sequence can be diagonalized into a sum of hyperbolicity preserving operators, where each of the summands forms a classical multiplier sequence. Interestingly, this does not work for other orthogonal bases; for example, this property fails for the Legendre basis. We establish many new formulas concerning the $Q_k$'s of Peetre's 1959 differential representation for linear operators in the specific case of Hermite and Laguerre diagonal differential operators. Additionally, we provide a new algebraic characterization of the Hermite multiplier sequences and also extend a recent result of T. Forg\'acs and A. Piotrowski on hyperbolicity properties of the polynomial coefficients in hyperbolicity preserving Hermite diagonal differential operators. 
\end{abstract}

\section{Introduction}

Define the Jacobi-Theta function by, 
\beq
\Phi(t):=\sum_{n=1}^\infty (2n^4\pi^2e^{9t}-3n^2\pi e^{5t})e^{-n^2\pi e^{4t}}. 
\eeq
It is well known that the Riemann Hypothesis \cite[(1859)]{Rie59} is equivalent to the statement that the integral cosine transform of the Jacobi-Theta function, 
\beq\label{eq:jtfunction}
\int \Phi(t) \cos(xt) dt, 
\eeq
can be uniformly approximable by polynomials with only real zeros (see for example G. Csordas, T. Norfolk, and R. Varga \cite{CNV86}) (see also \cite{CC13,CV88,CV90,Cso98}). In 1913, J. Jensen \cite{Jen13} showed that every entire function, 
\beq\label{eq:f(x)}
f(x):=\sum_{k=0}^\infty \frac{\gamma_k}{k!}x^k, 
\eeq
can be uniformly approximated by polynomials with only real zeros if and only if $g_n(x)$ has only real zeros for each $n\in\N_0$, where
\beq
g_n(x):=\sum_{k=0}^n \binom{n}{k} \gamma_k x^k. 
\eeq
Hence, the associated Jensen polynomials, $\{g_n(x)\}_{n=0}^\infty$, have received a great deal of attention in modern times (see for example \cite{DC09,CC83,CC89,CW75}). In particular, M. Chasse in 2011 showed remarkably that the first $2\cdot 10^{17}$ Jensen polynomials of \eqref{eq:jtfunction} have only real zeros \cite[Theorem 177, p. 87]{Cha11}. 

In 1914, G. P\'olya and J. Schur \cite{PS14} gave a complete characterization of hyperbolicity preserving operators (operators that map polynomials with only real zeros to those of the same kind, see Definition \ref{def:hyper}) of the form,
\beq\label{eq:psintro}
T[x^n]:=\gamma_n x^n,\ \{\gamma_n\}_{n=0}^\infty\subset\R,
\eeq
by showing that $f(x)$ (from \eqref{eq:f(x)}) must be uniformly approximable by polynomials with zeros of one sign  (throughout the literature, $\{\gamma_n\}_{n=0}^\infty$ is called a multiplier sequence). Their work was greatly extended in 2009 by J. Borcea and P. Br\"and\'en \cite{BB09} who demonstrated that essentially every linear operator written in J. Peetre's \cite[(1959)]{Pee59} differential operator form, 
\beq\label{eq:do}
T:=\sum_{k=0}^\infty Q_k(x)D^k,\ D:=\frac{d}{dx}, 
\eeq
is hyperbolicity preserving if and only if either 
\beq
T[e^{xw}]:=e^{xw}\sum_{k=0}^\infty Q_k(x)w^k\ \ \text{ or }\ \ T[e^{-xw}]:=e^{-xw}\sum_{k=0}^\infty Q_k(x)(-w)^k
\eeq
is uniformly approximable by real two-variable stable polynomials. 

It was shown by Laguerre in 1882 \cite{Lag82} (later generalized by G. P\'olya \cite[(1913)]{Pol13}, \cite[(1915)]{Pol15}) that every real entire function, which can be uniformly approximated by polynomials with only real zeros, must be of the form, 
\beq
f(x):=cx^m e^{-ax^2+bx} \prod_{k=1}^\omega \left(1+\frac{x}{x^k}\right)e^{-x/x_k}, 
\eeq
where $0\le \omega\le \infty$, $a,b,c\in\R$, $a\ge 0$, $m\in\N_0$, $\{x_k\}_{k=1}^\omega\subset\R$, $x_k\not=0$, and $\sum_{k=1}^\omega \frac{1}{x_k^2}<\infty$. Likewise, real entire functions, that can be uniformly approximated by polynomials with non-positive zeros, must be of the form, 
\beq\label{eq:lpplusprod}
f(x):=cx^me^{bx}\prod_{k=1}^\omega \left(1+\frac{x}{x^k}\right), 
\eeq
where $0\le\omega\le\infty$, $b,c\in\R$, $b\ge 0$, $m\in\N_0$, $\{x^k\}_{k=1}^\omega$, $x_k>0$, and $\sum_{k=1}^\omega \frac{1}{x^k}<\infty$. In 1983, T. Craven and G. Csordas demonstrated an important subclass of functions from \eqref{eq:lpplusprod}, showing that $b\ge 1$ if and only if $f^{(k)}(0)\le f^{(k+1)}(0)$ ($\gamma_k\le \gamma_{k+1}$) for every $k\in\N_0$ \cite{CC83}. This motivated re-investigating some of P.~Tur\'an's results \cite[(1954)]{Tur54} by modifying G. P\'olya and J. Schur's operator, equation \eqref{eq:psintro}, replacing $x^n$ with the $n^{\text{th}}$ Hermite polynomial (see \cite{BC01}). In 2007, A. Piotrowski gave a complete characterization of hyperbolicity preserving operators that diagonalize on the Hermite basis, 
\beq
T[H_n(x)]:=\gamma_n H_n(x),\ \{\gamma_n\}_{n=0}^\infty\subset\R,
\eeq
where for each $n\in\N_0$, $H_n(x)$ denotes the $n^{\text{th}}$ Hermite polynomial. It was demonstrated that $T$ is hyperbolicity preserving if and only if $\{\gamma_n\}_{n=0}^\infty$ is an increasing classical multiplier sequence (from \eqref{eq:psintro}) (see Theorem \ref{thm:msherm}). Recently, there has been significant motivation in characterizing multiplier sequences of any basis (see \cite{FHMS13, Bat14a, BY13, BDFU12, Cha11, FP13a, FP13b, Pio07, Yos13}). In particular, it has become increasingly apparent, the role that orthogonal polynomials seem to play in defining hyperbolicity preserving operators (see also the recent characterization of Laguerre multiplier sequences by P. Br\"and\'en and E. Ottergren \cite{BO12}, see Theorem \ref{thm:mslag}). 

In this paper, we modify J. Peetre's differential representation \cite{Pee59}, giving a new differential representation for study with respect to hyperbolicity preservation (Theorem \ref{thm:classic} and \ref{thm:classic2}). We use this to essentially show that every Hermite and Laguerre multiplier sequence can be written as a sum of classical multiplier sequences (Theorem \ref{thm:herm} and \ref{thm:lag}). Interestingly, the Legendre basis does not enjoy this property (Example \ref{ex:leg}). New methods of determining the differential representation of Hermite and Laguerre diagonal differential operators are found (Theorem \ref{thm:hermqk}, \ref{thm:hermcomplex}, and \ref{thm:lagqk}). Additionally, we give a new algebraic characterization of Hermite multiplier sequences (Theorem \ref{thm:minimal}) and generalize a recent statement of T. Forg\'acs and A. Piotrowski \cite{FP13b}, on the hyperbolicity properties of the $Q_k$'s in \eqref{eq:do} that arise from a Hermite diagonal differential operator (Theorem \ref{thm:hermrealzeros}).  

\begin{definition}\label{def:poly}
We will denote the \textit{Hermite}, \textit{Laguerre}, and \textit{Legendre} polynomials as, $\{H_n(x)\}_{n=0}^\infty$, $\{L_n(x)\}_{n=0}^\infty$, and $\{P_n(x)\}_{n=0}^\infty$, respectively \cite[pp. 157, 187, 201]{Rai60}. For each $n\in\N_0$, these polynomials are given by the following formulas, 
\begin{align}
H_n(x)&=\sum_{k=0}^{[n/2]}\frac{(-1)^k n! 2^{n-2k}}{k!(n-2k)!} x^{n-2k}, \\
L_n(x)&=\sum_{k=0}^n \frac{(-1)^k}{k!}\binom{n}{k}x^k,\ \text{and} \\
P_n(x)&=\sum_{k=0}^{[n/2]} \frac{(-1)^k}{2^n}\binom{n}{k}\binom{2n-2k}{n}x^{n-2k}.
\end{align}
It is well know that these polynomials satisfy the following differential equations \cite[pp. 173, 188, 204, 258]{Rai60}, 
\begin{align}
&\left((-1/2)D^2+(x)D\right)H_n(x)=(n)H_n(x), \\
&\left((-x)D^2+(x-1)D\right)L_n(x)=(n)L_n(x),\ \text{and} \\
&\left((x^2-1)D^2+(2x)D\right)P_n(x)=(n^2+n)P_n(x),  
\end{align}
where $D:=\frac{d}{dx}$. 
\end{definition}

\begin{definition}
Suppose $f(x)$ is an entire function, 
\beq
f(x):=\sum_{k=0}^\infty \frac{\gamma_k}{k!}x^k. 
\eeq
For each $n\in\N_0$, we define the $n^\text{th}$ \textit{Jensen polynomial} associated to the entire function $f(x)$ (or associated to the sequence $\{\gamma_k\}_{k=0}^\infty$) by, 
\beq
g_n(x):=\sum_{k=0}^n \binom{n}{k}\gamma_k x^k. 
\eeq
Likewise, for each $n\in\N_0$, we define the $n^\text{th}$ \textit{reversed Jensen polynomial} by, 
\beq
g_n^*(x):=\sum_{k=0}^n \binom{n}{k} \gamma_k x^{n-k}. 
\eeq
\end{definition}

\begin{definition}\label{def:diagonalize}
Let $T:\R[x]\to\R[x]$ be a linear operator such that $T[B_n(x)]=\gamma_nB_n(x)$ for every $n\in\N_0$, where $\{\gamma_n\}_{n=0}^\infty$ is a sequence of real numbers and $\{B_n(x)\}_{n=0}^\infty$, $\deg(B_n(x))=n$, $B_0\not\equiv 0$, is a basis of real polynomials. Then $T$ will be referred to as a \textit{diagonal differential operator} with respect to the eigenvector sequence, $\{B_n(x)\}_{n=0}^\infty$, and eigenvalue sequence, $\{\gamma_n\}_{n=0}^\infty$. If $\{B_n(x)\}_{n=0}^\infty=\{x^n\}_{n=0}^\infty$ then $T$ is said to be a \textit{classical diagonal differential operator}. Similarly, if $\{B_n(x)\}_{n=0}^\infty$ is the Hermite, Laguerre, or Legendre polynomials (Definition \ref{def:poly}), then $T$ is said to be a \textit{Hermite diagonal differential operator}, \textit{Laguerre diagonal differential operator}, or a \textit{Legendre diagonal differential operator}, respectively. 
\end{definition}

\begin{definition}\label{def:hyper}
Let $T:\R[x]\to\R[x]$ be a linear operator. Operator $T$ is said to be \textit{hyperbolicity preserving} if $T[p(x)]$ has only real zeros whenever $p(x)\in\R[x]$ has only real zeros. If in addition, $T$ diagonalizes on $\{B_n(x)\}_{n=0}^\infty=\{x^n\}_{n=0}^\infty$, $\{B_n(x)\}_{n=0}^\infty=\{H_n(x)\}_{n=0}^\infty$, $\{B_n(x)\}_{n=0}^\infty=\{L_n(x)\}_{n=0}^\infty$, or $\{B_n(x)\}_{n=0}^\infty=\{P_n(x)\}_{n=0}^\infty$, as in $T[B_n(x)]=\gamma_n B_n(x)$ for some sequence of real numbers, $\{\gamma_n\}_{n=0}^\infty$, then $\{\gamma_n\}_{n=0}^\infty$ is called a \textit{classical multiplier sequence}, \textit{Hermite multiplier sequence}, \textit{Laguerre multiplier sequence}, or \textit{Legendre multiplier sequence}, respectively. 
\end{definition}

\begin{definition}
Suppose $T$ is a hyperbolicity preserving operator that diagonalizes on $\{B_n(x)\}_{n=0}^\infty$ and $\{\gamma_n\}_{n=0}^\infty$, where
\beq
\{\gamma_n\}_{n=0}^\infty := \{0,0,\ldots,0,0,\alpha,\beta,0,0,0,\ldots\},\ \alpha,\beta\in\R. 
\eeq
Then, $\{\gamma_n\}_{n=0}^\infty$ is called a \textit{trivial multiplier sequence}. In Theorems \ref{thm:msherm} and \ref{thm:mslag} we will exclude all trivial multiplier sequences. 
\end{definition}

\begin{definition}\label{def:lp}
The \textit{Laguerre-P\'olya class}, denoted as $\lp$, is the set of entire functions that are uniform limits of \textit{hyperbolic polynomials}, real valued polynomials with only real zeros. We define $\lp^s$ to be the entire functions in $\lp$ with Taylor coefficients of the same sign. Likewise, we define $\lp^a$ to be the entire functions in $\lp$ with alternating Taylor coefficients. The notation, $\lp^{sa}$, is defined as $\lp^{sa}:=\lp^s\cup\lp^a$. Given an interval, $I\subseteq\R$, $\lp^* I$ will denote functions in $\lp^*$ that have zeros only in $I$, where $\lp^*$ is either $\lp$, $\lp^s$, $\lp^a$, or $\lp^{sa}$.  
\end{definition}

\begin{theorem}[{T. Craven and G. Csordas \cite[(1983)]{CC83}}]\label{thm:lpinc}
Suppose $f(x)\in\lp^s$. Then $|f^{(k)}(0)|\le |f^{(k+1)}(0)|$ for all $k\in\N_0$ if and only if $e^{-x}f(x)\in\lp^s$. 
\end{theorem}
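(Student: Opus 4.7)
My overall plan is to pass through the Laguerre--P\'olya factorization of $f$ and reduce the theorem to a statement about the exponential factor. Replacing $f$ by $-f$ if necessary (which preserves both $|f^{(k)}(0)|$ and the membership $e^{-x}f\in\lp^s$), I may assume $\gamma_k:=f^{(k)}(0)\ge 0$ for every $k\in\N_0$. By the product representation \eqref{eq:lpplusprod}, $f(x)=cx^m e^{bx}\prod_j(1+x/x_j)$ with $c\ge 0$, $b\ge 0$, $x_j>0$, and $\sum 1/x_j<\infty$, so
\beq
e^{-x}f(x)=cx^m e^{(b-1)x}\prod_j(1+x/x_j).
\eeq
The same classification of $\lp^s$-functions shows that the right-hand side lies in $\lp^s$ if and only if $b-1\ge 0$. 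Thus the theorem reduces to the equivalence \emph{$\gamma_k\le\gamma_{k+1}$ for all $k$ if and only if $b\ge 1$}.

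For the direction $b\ge 1\Rightarrow$ monotonicity, I would write $e^{-x}f(x)=\sum\beta_k x^k/k!$ with all $\beta_k\ge 0$ (from the factorization) and use $f=e^x(e^{-x}f)$ together with the Leibniz rule to obtain $\gamma_n=\sum_{k=0}^n\binom{n}{k}\beta_k$; the conclusion $\gamma_{n+1}\ge\gamma_n$ then follows from $\binom{n+1}{k}\ge\binom{n}{k}$ for $0\le k\le n$.

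For the harder converse, I would argue asymptotically. First, iterated Laguerre inequalities $f^{(n+1)}(x)^2\ge f^{(n)}(x)f^{(n+2)}(x)$ evaluated at $x=0$ give log-concavity of $\{\gamma_n\}$, so on the range of indices where $\gamma_n>0$ the ratio $r_n:=\gamma_{n+1}/\gamma_n$ is monotonically non-increasing. Second, since the canonical product $g(x):=cx^m\prod_j(1+x/x_j)$ has exponential type $0$ (being genus zero with $\sum 1/x_j<\infty$), the full function $f=e^{bx}g$ has exponential type exactly $b$. By the Cauchy--Hadamard formula for the type of an order-at-most-one entire function, $\limsup_n \gamma_n^{1/n}=b$; combined with convergence of the monotone ratio $r_n$, the ratio-to-root test forces $r_n\downarrow b$. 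The hypothesis $r_n\ge 1$ on the relevant tail then yields $b\ge 1$.

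The hard part will be justifying the asymptotic $r_n\downarrow b$ with full rigor, especially when the canonical product is transcendental or $b=0$; one also needs to discard the initial block of zero coefficients produced by the $x^m$ factor before invoking log-concavity of the ratios, and the degenerate case $f\equiv 0$ must be handled by direct inspection.
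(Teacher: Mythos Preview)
The paper does not supply a proof of Theorem~\ref{thm:lpinc}; it is quoted from Craven and Csordas \cite{CC83} and thereafter used as a tool (see Remark~\ref{rm:lpinc} and the proof of Theorem~\ref{thm:minimal}). There is therefore no in-paper argument to compare yours against.

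On its own terms your outline is sound. The reduction via the product representation \eqref{eq:lpplusprod} to the single inequality $b\ge 1$ is correct, and the easy direction (Cauchy product with nonnegative $\beta_k$ and $\binom{n+1}{k}\ge\binom{n}{k}$) is clean. For the converse, the two ingredients you need are both true and not difficult to justify rigorously: (i) the genus-zero product $P(z)=\prod_j(1+z/x_j)$ with $\sum_j 1/x_j<\infty$ has exponential type $0$, since $\log|P(z)|\le\sum_j\log(1+|z|/x_j)$ and dominated convergence gives $\log P(r)/r\to 0$; and (ii) for any entire function of exponential type $\tau$ one has $\tau=\limsup_n\gamma_n^{1/n}$ via Stirling in the standard coefficient--type formula. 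Combining (i)--(ii) with the Tur\'an inequalities $\gamma_{n+1}^2\ge\gamma_n\gamma_{n+2}$ (valid for all $f\in\lp$) and the ratio-to-root lemma yields $b=\lim_n r_n\ge 1$ exactly as you sketch. The edge cases you flag are easily dispatched: the $x^m$ prefix only shifts the starting index; $f\equiv 0$ is trivial; and a nonzero polynomial is automatically excluded by the hypothesis, since its top coefficient $\gamma_d>0$ would violate $\gamma_d\le\gamma_{d+1}=0$.
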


\begin{remark}\label{rm:lpinc}
In the sequel we will make us of the fact that many of the classes defined above are closed under differentiation. Consider an entire function, 
\beq
f(x):=\sum_{k=0}^\infty \frac{\gamma_k}{k!}x^k.
\eeq
If $f(x)$ is in $\lp^s$, $\lp^a$, or $\lp$, then for every $n\in\N_0$, $f^{(n)}(x)$ is also in $\lp^s$, $\lp^a$, or $\lp$, respectively. Similarly, a slight extension of Theorem \ref{thm:lpinc} shows that if $e^{-\sigma x}f(x)\in\lp^s$ ($\sigma>0$), then for every $n\in\N_0$, $e^{-\sigma x}f^{(n)}(x)\in\lp^s$. Likewise, if $e^{\sigma x}f(x)\in\lp^a$ ($\sigma>0$), then for every $n\in\N_0$, $e^{\sigma x} f^{(n)}(x)\in\lp^a$.  
\end{remark}

\begin{theorem}[{\hspace{1sp}\cite{Bat14a}, \cite{Pee59}, \cite[Proposition 29, p. 32]{Pio07}}]\label{thm:piotr}
If $T:\R[x]\to\R[x]$ is any linear operator, then there is a unique sequence of real polynomials, $\{Q_k(x)\}_{k=0}^\infty\subset\R[x]$, such that 
\begin{equation} \label{eq:linear-operator}
T=\sum_{k=0}^\infty Q_k(x) D^k,\ \mbox{where } D:=\frac{d}{dx}.
\end{equation}
Furthermore, given any sequence of polynomials, $\{B_n(x)\}_{n=0}^\infty$ \textup{(}$\deg(B_n(x))=n$ for each $n\in\N_0$, $B_0(x)\not\equiv 0$\textup{)}, then for each $n\in\N_0$, 
\beq\label{equ13}
Q_n(x)=\frac{1}{B_n^{(n)}}\left(T[B_n(x)]-\sum_{k=0}^{n-1} Q_k(x)B_n^{(k)}(x)\right). 
\eeq
\end{theorem}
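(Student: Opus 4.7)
The plan is to take the recursion \eqref{equ13} as the definition of the $Q_k$'s and then verify that the resulting formal sum reassembles $T$; uniqueness will follow from a separate inductive comparison on the monomial basis.

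For existence, the key observation is that $\deg B_n(x) = n$ forces $B_n^{(n)}(x)$ to be the nonzero real constant $n!\,a_n$, where $a_n$ is the leading coefficient of $B_n$. Hence the division in \eqref{equ13} is by a nonzero scalar, and a straightforward induction on $n$ shows $Q_n \in \R[x]$ for every $n \in \N_0$: assuming $Q_0, \ldots, Q_{n-1} \in \R[x]$, the numerator of \eqref{equ13} is a polynomial while the denominator is a nonzero real scalar. To confirm that $\tilde T := \sum_{k=0}^\infty Q_k(x) D^k$ agrees with $T$, I would rearrange \eqref{equ13} into the identity
\[
T[B_n(x)] = \sum_{k=0}^n Q_k(x) B_n^{(k)}(x) = \tilde T[B_n(x)],
\]
where the last equality uses $B_n^{(k)} \equiv 0$ for $k > n$, so only finitely many terms of the formal sum act nontrivially on any fixed polynomial. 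Since $\tilde T$ and $T$ agree on the basis $\{B_n(x)\}_{n=0}^\infty$, linearity extends the equality to all of $\R[x]$.

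For uniqueness, I would apply two supposed representations $\sum_k Q_k(x) D^k = \sum_k \tilde Q_k(x) D^k$ to the monomials $x^n$ for $n = 0, 1, 2, \ldots$ in turn. Evaluation at $n = 0$ gives $Q_0 = \tilde Q_0$, and at step $n$ every summand with $k < n$ agrees by the inductive hypothesis, leaving $n!\,Q_n(x) = n!\,\tilde Q_n(x)$ and hence $Q_n = \tilde Q_n$.

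No genuine obstacle arises. The only conceptual point worth flagging is that the infinite sum $\sum_{k=0}^\infty Q_k(x) D^k$ should be interpreted as a formal differential operator whose action on any $p(x) \in \R[x]$ of degree $m$ truncates automatically after the $k = m$ term, so there is no convergence issue to resolve; the entire proof is essentially a verification that the recursion \eqref{equ13} is both well-posed and captures exactly the action of $T$ on the chosen basis.
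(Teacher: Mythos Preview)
Your proof is correct. Note, however, that the paper does not supply its own proof of this theorem: it is stated with citations to \cite{Bat14a}, \cite{Pee59}, and \cite[Proposition 29, p.~32]{Pio07} and then used as a known result, so there is no in-paper argument to compare against. Your argument is the standard one found in those references --- define the $Q_n$'s recursively via \eqref{equ13}, observe that $B_n^{(n)}$ is a nonzero scalar so the recursion is well-posed, check that the resulting operator agrees with $T$ on the basis $\{B_n\}$, and deduce uniqueness by evaluating on monomials inductively.
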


\begin{theorem}[{G. P\'olya and J. Schur \cite[(1914)]{PS14}}]\label{thm:ms}
Let $\{\gamma_k\}_{k=0}^\infty$ be a sequence of real numbers. Sequence $\{\gamma_k\}_{k=0}^\infty$ is a positive or negative multiplier sequence if and only if 
\beq
\sum_{k=0}^\infty \frac{\gamma_k}{k!} x^k \in \lp^s.
\eeq
Sequence $\{\gamma_k\}_{k=0}^\infty$ is an alternating multiplier sequence if and only if 
\beq
\sum_{k=0}^\infty \frac{\gamma_k}{k!} x^k \in \lp^a.
\eeq
\end{theorem}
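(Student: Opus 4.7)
The plan is to prove the classical P\'olya--Schur theorem by passing through the Jensen polynomials, combining an elementary forward direction with the Malo--Schur composition theorem for the reverse. First, the alternating case reduces to the positive/negative case via the substitution $x \mapsto -x$: if one defines $\tilde{T}[p](x) := T[p(-x)](x)$ then $\tilde{T}[x^n] = (-1)^n \gamma_n x^n$, hyperbolicity preservation of $T$ and $\tilde{T}$ are equivalent (since $p$ is hyperbolic iff $p(-x)$ is), and the generating function of $\{(-1)^n \gamma_n\}$ is $\varphi(-x)$, which lies in $\lp^s$ iff $\varphi \in \lp^a$. So I focus on proving that $\{\gamma_k\}_{k=0}^\infty$ is a multiplier sequence with constant-sign entries if and only if $\varphi(x) := \sum_{k=0}^\infty \gamma_k x^k / k! \in \lp^s$. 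The intermediate bridge is the chain of equivalences: $\{\gamma_k\}$ is a (sign-constant) multiplier sequence $\iff$ every Jensen polynomial $g_n$ is hyperbolic with all zeros of one sign $\iff$ $\varphi \in \lp^s$.

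For the first equivalence, the forward direction is immediate because $(1 + x)^n$ is hyperbolic, so $T[(1 + x)^n] = g_n(x)$ must be hyperbolic; since the $\gamma_k$'s all share a sign, the coefficients of $g_n$ also share a sign and Descartes' rule of signs forces the zeros of $g_n$ to be all non-positive (or all non-negative). The reverse direction is the key step and relies on the Malo--Schur composition theorem: if $p(x) = \sum_{k=0}^n a_k x^k$ is hyperbolic and $q(x) = \sum_{k=0}^n b_k x^k$ is hyperbolic with all zeros of one sign, then $\sum_{k=0}^n \binom{n}{k}^{-1} a_k b_k x^k$ is hyperbolic. Taking $q = g_n$, so that $b_k = \binom{n}{k}\gamma_k$, and $p$ any hyperbolic polynomial of degree $\leq n$, the composition is precisely $T[p](x) = \sum_k a_k \gamma_k x^k$, which is therefore hyperbolic; constancy of sign of the $\gamma_k$'s is read off from Vieta's formulas applied to the Jensen polynomials.

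For the second equivalence, in the forward direction I approximate $\varphi \in \lp^s$ locally uniformly by polynomials $P_N(x) = c_N x^{m_N} \prod_j (1 + x / x_j^{(N)})$ with $x_j^{(N)} > 0$, using the Hadamard-type factorization from \eqref{eq:lpplusprod}. The Jensen polynomials $g_n^{(N)}$ associated to $P_N$ are hyperbolic with zeros of one sign (a finite-dimensional check verifiable by induction on the number of linear factors using Laguerre's separation theorem or Newton's inequalities), and Hurwitz's theorem applied to the locally uniform convergence $g_n^{(N)} \to g_n$ yields the desired hyperbolicity of $g_n$ with zeros of one sign. For the reverse direction, the elementary identity $g_n(x/n) = \sum_{k=0}^n \frac{n(n-1)\cdots(n-k+1)}{n^k} \cdot \frac{\gamma_k}{k!} x^k$ shows $g_n(x/n) \to \varphi(x)$ locally uniformly as $n \to \infty$, and another application of Hurwitz's theorem places $\varphi \in \lp^s$.

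The main obstacle is the Malo--Schur composition theorem, which is the non-trivial classical engine that converts pointwise hyperbolicity of the Jensen polynomials into uniform hyperbolicity preservation by the operator $T$. It is typically proved via the Grace--Walsh--Szeg\H{o} coincidence theorem (by specializing multi-affine stable polynomials to the diagonal) or by induction on the degree using Laguerre's separation theorem. Everything else in the argument is approximation via Hurwitz's theorem together with the two explicit computations $T[(1 + x)^n] = g_n(x)$ and $g_n(x/n) \to \varphi(x)$.
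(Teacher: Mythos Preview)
The paper does not prove this theorem; it is stated without proof as the classical 1914 result of P\'olya and Schur, cited to \cite{PS14}, and used throughout as a foundational tool. There is therefore no proof in the paper to compare your attempt against.

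That said, your outline is a correct sketch of one of the standard classical routes: reduce the alternating case to the same-sign case by $x\mapsto -x$, bridge through the Jensen polynomials, and use the Malo--Schur (Schur--Szeg\H{o}) composition theorem as the engine for the hard direction (Jensen polynomials hyperbolic $\Rightarrow$ $T$ hyperbolicity preserving). The approximation arguments via $g_n(x/n)\to\varphi(x)$ and Hurwitz are the right way to pass between the polynomial and transcendental settings. The one place where you are slightly brisk is the claim that the Jensen polynomials of a finite product $\prod_j(1+x/x_j)$ with $x_j>0$ are themselves hyperbolic with zeros of one sign; this is true, but the cleanest justification is not really ``induction on linear factors via Laguerre's separation theorem'' so much as observing that for a polynomial $P$ with only non-positive zeros the operator $p(x)\mapsto \sum_k P^{(k)}(0)\,p^{(k)}(x)\,x^k/k!$ is a finite composition of the elementary operators $p\mapsto p + (x/x_j)p'$, each of which preserves hyperbolicity by Rolle/Laguerre. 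Either way the step goes through, and your identification of Malo--Schur as the non-trivial input is exactly right.
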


\begin{theorem}[{A. Piotrowski \cite[Theorem 152, p. 140 (2007)]{Pio07}}]\label{thm:msherm}
Let $\{\gamma_k\}_{k=0}^\infty$ be a sequence of real numbers and let $\{g_k^*(x)\}_{k=0}^\infty$ be the sequence of reversed Jensen polynomials associated with $\{\gamma_k\}_{k=0}^\infty$. Sequence $\{\gamma_k\}_{k=0}^\infty$ is a non-trivial positive or negative Hermite multiplier sequence if and only if 
\beq
e^{-x} \sum_{k=0}^\infty \frac{\gamma_k}{k!} x^k = \sum_{k=0}^\infty \frac{g_k^*(-1)}{k!}x^k \in \lp^s.
\eeq 
Sequence $\{\gamma_k\}_{k=0}^\infty$ is a non-trivial alternating Hermite multiplier sequence if and only if 
\beq
e^x \sum_{k=0}^\infty \frac{\gamma_k}{k!} x^k = e^{2x} \sum_{k=0}^\infty \frac{g_k^*(-1)}{k!}x^k \in \lp^a.
\eeq
\end{theorem}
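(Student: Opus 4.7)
The plan is to reduce the statement to two already-available tools combined with an intermediate algebraic characterization. The displayed equality is a direct Cauchy-product computation: multiplying $e^{-x}=\sum_{l\ge0}(-x)^l/l!$ by $\sum_{j\ge0}\gamma_jx^j/j!$ and collecting the coefficient of $x^n/n!$ yields $\sum_{j=0}^n\binom{n}{j}\gamma_j(-1)^{n-j}=g_n^*(-1)$; the alternating-case identity follows by multiplying through by $e^{2x}$. The heart of the proof is to establish the intermediate equivalence that $\{\gamma_k\}_{k=0}^\infty$ is a non-trivial positive or negative Hermite multiplier sequence if and only if $\{\gamma_k\}_{k=0}^\infty$ is a classical multiplier sequence of the same sign with $|\gamma_k|\le|\gamma_{k+1}|$ for every $k\in\N_0$. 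Once this is in hand, combining Theorem~\ref{thm:ms} (classical MS $\Leftrightarrow$ $\sum\gamma_kx^k/k!\in\lp^s$) with Theorem~\ref{thm:lpinc} (for $f\in\lp^s$, the growth $|f^{(k)}(0)|\le|f^{(k+1)}(0)|$ $\Leftrightarrow$ $e^{-x}f(x)\in\lp^s$) completes the positive or negative case.

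For the forward direction of the intermediate equivalence, assume $T[H_n]=\gamma_nH_n$ preserves hyperbolicity. Using the identity $H_n(\delta x)=\sum_{m=0}^{[n/2]}\frac{n!(\delta^2-1)^m\delta^{n-2m}}{m!(n-2m)!}H_{n-2m}(x)$, which follows from the generating function $e^{2xt-t^2}=\sum H_n(x)t^n/n!$, I would apply $T$ to the hyperbolic polynomial $H_n(\delta x)$ and divide by $\delta^n$; the resulting expression converges as $\delta\to\infty$ to $T[(2x)^n]$, which is therefore hyperbolic, and a similar limit applied to $H_n(\delta(x+a/2))$ shows that $T[(2x+a)^n]$ is hyperbolic for every $a\in\R$. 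Rescaling $x$ then gives hyperbolicity of $T[(x+a)^n]$ for every $a\in\R$, which via the classical P\'olya--Schur argument delivers hyperbolicity of the Jensen polynomial $g_n(x)=\sum_k\binom{n}{k}\gamma_kx^k$ and hence the classical MS property. To extract $|\gamma_k|\le|\gamma_{k+1}|$, I would apply $T$ to the one-parameter family $(x-a)H_n(x)$; using the three-term recurrence $xH_n(x)=\tfrac12H_{n+1}(x)+nH_{n-1}(x)$, this has Hermite expansion $\tfrac12H_{n+1}(x)+nH_{n-1}(x)-aH_n(x)$, so the image $\tfrac{\gamma_{n+1}}{2}H_{n+1}(x)+n\gamma_{n-1}H_{n-1}(x)-a\gamma_nH_n(x)$ must be hyperbolic for every $a\in\R$. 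This forces an interlacing between $\gamma_nH_n(x)$ and $\tfrac{\gamma_{n+1}}{2}H_{n+1}(x)+n\gamma_{n-1}H_{n-1}(x)$; comparing signs at the extreme zeros of $H_n$ then yields the required monotonicity. The converse direction is established by showing that the Hermite--Jensen polynomials $\sum_{k=0}^n\binom{n}{k}\gamma_kH_k(x)=T[H_n(x+1/2)]$ remain hyperbolic under the hypotheses and extending to arbitrary hyperbolic input by a density argument in the Hermite basis.

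The alternating case reduces to the positive or negative case via the reflection isomorphism $R[p(x)]:=p(-x)$. Since $H_n(-x)=(-1)^nH_n(x)$, the identity $R\circ T_\gamma=T_{\tilde\gamma}$ holds where $\tilde\gamma_k:=(-1)^k\gamma_k$, and because $R$ preserves hyperbolicity, $\{\gamma_k\}$ is an alternating Hermite MS if and only if $\{(-1)^k\gamma_k\}$ is a positive or negative Hermite MS; applying the first case and substituting $x\mapsto-x$ converts $e^{-x}\sum(-1)^k\gamma_kx^k/k!\in\lp^s$ into $e^x\sum\gamma_kx^k/k!\in\lp^a$. The primary obstacles I anticipate are the interlacing/sign analysis required to establish $|\gamma_n|\le|\gamma_{n+1}|$ in the forward direction, and the density construction in the converse direction, both of which must carefully handle degenerate configurations excluded by the ``non-trivial'' hypothesis (including equalities among consecutive $|\gamma_k|$ and collapses in the leading Hermite coefficients).
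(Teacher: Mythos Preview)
The paper does not provide its own proof of this theorem; it is stated as a cited result from Piotrowski's 2007 thesis and used as a foundational tool throughout. There is therefore no proof in the paper to compare your proposal against.

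That said, your overall strategy---reducing to the intermediate characterization ``non-trivial Hermite multiplier sequence $\Leftrightarrow$ non-decreasing classical multiplier sequence'' and then invoking Theorems~\ref{thm:ms} and~\ref{thm:lpinc}---is precisely what the paper's introduction attributes to Piotrowski, and the Cauchy-product identity you open with is recorded verbatim in the paper as Theorem~\ref{thm:comb}. The reduction of the alternating case to the same-sign case via $H_n(-x)=(-1)^nH_n(x)$ is also correct.

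There are, however, genuine gaps in your sketch of the intermediate equivalence. In the forward direction, your limit computation does show that $T[(2x)^n]$ and $T[(2x+a)^n]$ are hyperbolic, but the assertion that this ``via the classical P\'olya--Schur argument delivers hyperbolicity of the Jensen polynomial $g_n(x)$'' is unjustified: $T$ is the Hermite diagonal operator, not the standard one, so $T[(x+a)^n]$ is \emph{not} $a^n g_n(x/a)$, and the P\'olya--Schur test does not apply to it. (The paper handles the passage ``Hermite MS $\Rightarrow$ classical MS'' by a different route, citing the separate fact that the operator $T_0[x^n]=\gamma_nx^n$ of Theorem~\ref{thm:classic} is automatically hyperbolicity preserving whenever $T$ is; see the paragraph before Example~\ref{ex:qua}.) Your interlacing argument for $|\gamma_n|\le|\gamma_{n+1}|$ is plausible in outline, but ``comparing signs at the extreme zeros of $H_n$ then yields the required monotonicity'' is exactly where the work lies and is not carried out; compare the alternative extraction of $\gamma_{n-2}\le\gamma_n$ via $D^{n-2}T[x^n]$ in the proof of Theorem~\ref{thm:minimal}. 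Finally, the converse direction---that an increasing classical multiplier sequence really does yield a hyperbolicity-preserving Hermite operator---is dismissed as ``a density argument in the Hermite basis,'' which is not a proof; this is the substantive direction and in Piotrowski's thesis requires a careful argument handling the non-trivial hypothesis.
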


\begin{theorem}[{P. Br\"and\'en and E. Ottergren \cite[(2014)]{BO12}}]\label{thm:mslag}
Let $\{\gamma_k\}_{k=0}^\infty$ be a sequence of real numbers and let $\{g_k^*(x)\}_{k=0}^\infty$ be the reversed Jensen polynomials associated with $\{\gamma_k\}_{k=0}^\infty$. Sequence $\{\gamma_k\}_{k=0}^\infty$ is a non-trivial positive or negative Laguerre multiplier sequence if and only if 
\beq
\sum_{k=0}^\infty g_k^*(-1)x^k \in \R[x]\cap\lp^s[-1,0].
\eeq
There are no non-trivial alternating Laguerre multiplier sequences. 
\end{theorem}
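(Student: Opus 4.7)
The strategy is to translate the Laguerre-basis diagonalization $T[L_n]=\gamma_nL_n$ into monomial-basis data via the inversion identity $\frac{x^n}{n!}=\sum_{k=0}^n(-1)^k\binom{n}{k}L_k(x)$, which yields $T[x^n/n!]=\sum_{k=0}^n(-1)^k\binom{n}{k}\gamma_kL_k(x)$. A central auxiliary identity, derivable from the Laguerre generating function $\sum_kL_k(x)t^k=(1-t)^{-1}e^{-xt/(1-t)}$, is
\[ \sum_{k=0}^n\binom{n}{k}c^kL_k(x)=(1+c)^nL_n\!\left(\frac{cx}{1+c}\right),\qquad c\in\R\setminus\{-1\}, \]
supplying a one-parameter family of hyperbolic test polynomials on which to probe $T$. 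A second key observation is that $g_k^*(-1)=\Delta^k\gamma_0$ (the $k$-th forward difference of $\{\gamma_k\}$), so Newton's series gives $\gamma_n=\sum_{k\ge 0}\binom{n}{k}g_k^*(-1)$, which is polynomial in $n$ exactly when $Q(x):=\sum_kg_k^*(-1)x^k$ is itself a polynomial.

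\textbf{Necessity.} Suppose $T$ is hyperbolicity preserving. Applying $T$ to $(1+c)^nL_n(cx/(1+c))$ yields $\sum_{k=0}^n\binom{n}{k}c^k\gamma_kL_k(x)$, which must be hyperbolic for each $c>-1$; letting $c\to-1^+$ and invoking Hurwitz's theorem shows $T[x^n/n!]$ is hyperbolic for every $n$. I would then analyze the generating function $H(x,t):=\sum_n\gamma_nL_n(x)t^n$ (the formal $T$-image of the Laguerre generating function) through a Hurwitz/normal-families argument in $x$, forcing $\{g_k^*(-1)\}$ to be eventually zero, so $Q\in\R[x]$ and $\gamma_n$ is polynomial in $n$. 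Refined inspection of the zero locations and sign patterns of the explicit hyperbolic images $T[x^n/n!]$ then pins $Q$ in $\lp^s[-1,0]$.

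\textbf{Sufficiency and alternating case.} Conversely, assume $Q\in\R[x]\cap\lp^s[-1,0]$ of degree $d$. Then $\gamma_n$ is polynomial in $n$, and $T=P(M_L)$ for the Laguerre ODE operator $M_L:=-xD^2+(x-1)D$ (with $M_LL_n=nL_n$) and the polynomial $P$ determined by $P(n)=\gamma_n$. To show $T$ preserves hyperbolicity, I would appeal to the Borcea--Br\"and\'en stability characterization of HPOs applied to the two-variable symbol $T[(x+z)^n]$: explicit computation using $x^k=k!\sum_j(-1)^j\binom{k}{j}L_j(x)$ converts the stability of this symbol into the geometric condition that $Q$ has all roots in $[-1,0]$ with fixed-sign coefficients---precisely the hypothesis. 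For the nonexistence of alternating Laguerre multiplier sequences, running the necessity analysis for $\gamma_n=(-1)^nc_n$ still forces $c_n$ polynomial in $n$; then tracking the monomial-basis sign patterns of $T[x^n/n!]$ for $n$ exceeding the degree of $c_n$ produces non-real zeros, a contradiction.

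The main obstacle I foresee is the truncation step in the necessity direction---passing from ``$T[x^n/n!]$ hyperbolic for every $n$'' to ``$\{g_k^*(-1)\}$ has finite support''---which requires a delicate normal-families argument on the generating function $H(x,t)$: super-polynomial growth of $\gamma_n$ would push zeros of the partial sums off the real line as $n\to\infty$, but making this precise while simultaneously localizing the zeros of $Q$ in the interval $[-1,0]$ is technically demanding. This restrictive polynomial structure is exactly what distinguishes the Laguerre case from the much richer classical and Hermite multiplier-sequence classes.
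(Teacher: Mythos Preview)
The paper does not prove this theorem at all: it is quoted verbatim as a background result of Br\"and\'en and Ottergren \cite{BO12} and then used as a black box (for instance in the proof of Theorem~\ref{thm:lag}). There is therefore no ``paper's own proof'' to compare your proposal against.

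Regarding your outline on its own merits: it is a plausible roadmap but remains a sketch with the substantial gap you yourself flag. The necessity step---passing from hyperbolicity of $T[x^n/n!]$ for every $n$ to finite support of $\{g_k^*(-1)\}$ and then to the root-localization $[-1,0]$---is precisely the nontrivial content of the Br\"and\'en--Ottergren argument, and a ``Hurwitz/normal-families'' gesture on the formal generating function $H(x,t)$ does not by itself deliver it; one needs a careful stability analysis, not just a growth-rate heuristic. Your sufficiency direction via $T=P(M_L)$ and the Borcea--Br\"and\'en symbol is closer in spirit to how such results are typically established, but the assertion that stability of $T[(x+z)^n]$ ``converts into'' the condition $Q\in\lp^s[-1,0]$ is stated, not demonstrated. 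The alternating case is handled only by analogy. In short, your proposal identifies the right objects and the right obstacles, but does not yet constitute a proof.
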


From Theorem \ref{thm:ms}, \ref{thm:msherm}, and \ref{thm:mslag}, it is clear that every Laguerre multiplier sequence is a Hermite multiplier sequence, and every Hermite multiplier sequence is a classical multiplier sequence (see also the classification diagram of K. Blakeman, E. Davis, T. Forg\'acs, and K. Urabe \cite{BDFU12}). 

In the literature, it is common to discuss only the non-negative multiplier sequences. However, many of our results establish strong differences between the sequences from $\lp^a$ and the sequences in $\lp^s$ (see for example Theorem \ref{thm:herminter}). Hence, we will take great care to discuss $\lp^s$ sequences separately from $\lp^a$ sequences. The following example demonstrates the strong differences in differential representation from positive eigenvalues versus alternating eigenvalues. 

\begin{example}\label{ex:hermfinin}
Consider the following hyperbolicity preserving Hermite diagonal differential operators (see Theorem \ref{thm:msherm}), 
\beq
T[H_n(x)]:=nH_n(x)\ \ \ \text{and}\ \ \ W[H_n(x)]:=(-1)^nnH_n(x). 
\eeq
Using the recursive formula from Theorem \ref{thm:piotr}, we calculate $T$ and $W$, 
\beq
T=(x)D+\left(-\frac{1}{2}\right)D^2, 
\eeq
and
\beq
W=(-x)D+\left(2x^2-\frac{1}{2}\right)D^2+\left(-2x^3+x\right)D^3+\cdots. 
\eeq
We observe that $T$ is a finite order differential operator, while $W$ is an infinite order differential operator. This observation makes sense when we note that $\{(-1)^n n\}_{n=0}^\infty$ is not interpolatable by a polynomial (see \cite{Bat14a}). 
\end{example}

The sensitivity of the two classes, $\lp^s$\ and $\lp^a$, can also be seen in the following theorem, which holds for sequences arising from $\lp^s$, but not for sequences arising from $\lp^a$. 

\begin{theorem}[{T. Craven and G. Csordas \cite[(1983)]{CC83}}]\label{thm:mscomb}
Let $\{\gamma_k\}_{k=0}^\infty$ be a positive or negative classical multiplier sequence. Then, for each $m\in\N_0$, 
\beq\label{eq:mscomb}
\left\{\sum_{k=0}^n \binom{n}{k} \gamma_{m+k} \right\}_{n=0}^\infty\ \ \ \text{and}\ \ \ \left\{\sum_{k=0}^m \binom{m}{k} \gamma_{n+k} \right\}_{n=0}^\infty, 
\eeq
are also positive or negative classical multiplier sequences, respectively. 
\end{theorem}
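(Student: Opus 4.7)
The plan is to invoke the Pólya–Schur characterization (Theorem \ref{thm:ms}) to recast both claims as $\lp^s$-membership problems for generating functions built from $f(x) := \sum_{k=0}^\infty \gamma_k x^k/k! \in \lp^s$. Because replacing $\gamma_k$ with $-\gamma_k$ swaps the positive and negative cases, I will work only with the positive case, where every $\gamma_k \geq 0$.

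For the first sum $\sigma_n := \sum_{k=0}^n \binom{n}{k} \gamma_{m+k}$, I would reindex by $j = n-k$ after swapping the order of summation in $\sum_{n=0}^\infty \sigma_n x^n/n!$, obtaining the identity $\sum_{n=0}^\infty \sigma_n x^n/n! = e^x f^{(m)}(x)$. Since $f \in \lp^s$, Remark \ref{rm:lpinc} gives $f^{(m)} \in \lp^s$, and multiplication by the $\lp^s$-function $e^x$ preserves the class (non-positive zeros, positive Taylor coefficients). Theorem \ref{thm:ms} applied to $e^x f^{(m)}(x)$ then yields that $\{\sigma_n\}$ is a positive classical multiplier sequence.

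For the second sum $\tau_n := \sum_{k=0}^m \binom{m}{k} \gamma_{n+k}$, a direct rearrangement shows $\sum_{n=0}^\infty \tau_n x^n/n! = \sum_{k=0}^m \binom{m}{k} f^{(k)}(x) = (1+D)^m f(x)$. The crux is then the intertwining identity $(1+D)^m = e^{-x} D^m e^x$, which I would obtain by iterating $D e^x = e^x(1+D)$; this recasts the generating function as $e^{-x} D^m[e^x f(x)]$. Setting $h(x) := e^x f(x)$, the function $h$ lies in $\lp^s$ (product of two $\lp^s$-functions) and $e^{-x} h(x) = f(x) \in \lp^s$, so the slight extension of Theorem \ref{thm:lpinc} recorded in Remark \ref{rm:lpinc}, with $\sigma = 1$, gives $e^{-x} h^{(m)}(x) \in \lp^s$; Theorem \ref{thm:ms} closes the argument. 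Spotting the operator identity $(1+D)^m = e^{-x} D^m e^x$ is the only non-routine ingredient—without it, one is stuck attempting to prove $\lp^s$-closure under $(1+D)^m$ from scratch—while everything else reduces to closure of $\lp^s$ under differentiation, closure under multiplication, and the Pólya–Schur characterization.
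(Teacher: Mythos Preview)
Your proposal is correct and follows essentially the same route as the paper: both identify the exponential generating functions of the two sequences as $e^{x}D^{m}f(x)$ and $e^{-x}D^{m}e^{x}f(x)$, respectively, and then appeal to closure of $\lp^s$ under differentiation and multiplication (the paper phrases the computations as Cauchy products, while you spell out the operator identity $(1+D)^m=e^{-x}D^{m}e^{x}$ and invoke Remark~\ref{rm:lpinc} explicitly).
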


\begin{proof}
For the first sequence, using a Cauchy product, we calculate 
\beq
\sum_{n=0}^\infty \left(\sum_{k=0}^n \binom{n}{k} \gamma_{m+k}\right)\frac{x^n}{n!} = e^x D^m \sum_{n=0}^\infty \frac{\gamma_n}{n!}x^n \in \lp^s. 
\eeq
For the second sequence, using two Cauchy products, we calculate 
\beq\nonumber
\sum_{n=0}^\infty \left(\sum_{k=0}^m \binom{m}{k} \gamma_{n+k}\right)\frac{x^n}{n!}  = e^{-x} D^m e^x \sum_{n=0}^\infty \frac{\gamma_n}{n!}x^n\in \lp^s. \eqno\qed
\eeq
\end{proof}

\begin{example}
We show that Theorem \ref{thm:mscomb} does not hold for $\lp^a$. Consider the following function in $\lp^a$, 
\beq
f(x):=\sum_{k=0}^\infty \frac{(-1)^k}{k!} \frac{x^k}{k!},
\eeq
which is obtained by application of the multiplier sequence $\{\frac{(-1)^k}{k!}\}_{k=0}^\infty$ to the function $e^x$. The sequence, 
\beq
\{\gamma_n\}_{n=0}^\infty=\left\{\sum_{k=0}^n \binom{n}{k} \frac{(-1)^k}{k!}\right\}_{n=0}^\infty, 
\eeq
has the form, 
\beq
\{\gamma_n\}_{n=0}^\infty=\left\{1,0,-\frac{1}{2},-\frac{2}{3},-\frac{5}{8},\ldots,\frac{887}{5760},\ldots\right\}. 
\eeq
Hence, $\{\gamma_n\}_{n=0}^\infty$ is not a multiplier sequence, since there is no function in $\lp^s$ or $\lp^a$ with Taylor coefficients that match the signs of $\{\gamma_n\}_{n=0}^\infty$ (see Theorem \ref{thm:ms}). 
\end{example}

For the reader's convenience we provide the following compilation of combinatorial identities that will be used extensively throughout the paper. These types of calculations have already been observed in the proof of Theorem \ref{thm:mscomb}. 

\begin{theorem}[{\hspace{1sp}\cite[p. 49]{Rio79}, \cite[Proposition 33, p. 35]{Pio07}}]\label{thm:comb}
Given a sequence of real numbers, $\{\alpha_k\}_{k=0}^\infty$, for each $n\in\N_0$, define, 
\beq
\beta_n= \sum_{k=0}^n \binom{n}{k} \alpha_k. 
\eeq
Then, for all $n\in\N_0$, 
\beq
\alpha_n = \sum_{k=0}^n \binom{n}{k} \beta_k (-1)^{n-k}. 
\eeq
In particular, we have, 
\beq
e^x \sum_{n=0}^\infty \frac{\alpha_n}{n!} x^n = \sum_{k=0}^\infty \frac{\beta_n}{n!}x^n\ \ \ \text{and}\ \ \ e^{-x} \sum_{k=0}^\infty \frac{\beta_n}{n!}x^n = \sum_{k=0}^\infty \frac{\alpha_n}{n!}x^n.
\eeq
Similarly, if $\{g_k^*(x)\}_{k=0}^\infty$ are the reversed Jensen polynomials associated with $\{\gamma_k\}_{k=0}^\infty$, then for every $n\in\N_0$, 
\beq\label{eq:jenreverse}
\gamma_n=\sum_{k=0}^n \binom{n}{k} g_k^*(-1)\ \ \ \text{and}\ \ \ g_n^*(-1)=\sum_{k=0}^n \binom{n}{k} \gamma_k (-1)^{n-k}. 
\eeq
Likewise, if $\{\gamma_k\}_{k=0}^\infty$ diagonalizes the classical diagonal differential operator, $T$, then 
\beq\label{eq:diagst}
T[x^n]=\left(\sum_{k=0}^\infty \frac{g_k^*(-1)}{k!} x^k D^k \right)x^n = \gamma_n x^n. 
\eeq 
\end{theorem}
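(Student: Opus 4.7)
The plan is to cascade the identities, starting from the two Cauchy product formulas, from which binomial inversion follows, and then specialize to Jensen polynomials and finally to the diagonal operator identity. First I would establish the generating function identity by a direct Cauchy product calculation:
\[
e^x \sum_{n=0}^\infty \frac{\alpha_n}{n!} x^n = \sum_{n=0}^\infty \left(\sum_{k=0}^n \frac{1}{(n-k)!}\cdot\frac{\alpha_k}{k!}\right) x^n = \sum_{n=0}^\infty \frac{1}{n!}\left(\sum_{k=0}^n \binom{n}{k} \alpha_k\right)x^n = \sum_{n=0}^\infty \frac{\beta_n}{n!}x^n.
\]
Multiplying both sides by $e^{-x}$ and comparing coefficients of $x^n/n!$ inverts the relation, yielding both the second power-series identity and the binomial inversion formula $\alpha_n = \sum_{k=0}^n \binom{n}{k}\beta_k(-1)^{n-k}$.

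Next, for the Jensen polynomial identities, I would note that the definition of the reversed Jensen polynomial gives $g_n^*(-1) = \sum_{k=0}^n \binom{n}{k}\gamma_k (-1)^{n-k}$ directly by substitution. Setting $\alpha_n := g_n^*(-1)$ and $\beta_n := \gamma_n$ in the inversion formula just established (and checking the sign conventions match) yields the companion identity $\gamma_n = \sum_{k=0}^n \binom{n}{k}g_k^*(-1)$. This step is purely bookkeeping.

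Finally, for the operator identity, I would compute the action of $x^k D^k$ on a monomial: $x^k D^k[x^n] = n(n-1)\cdots(n-k+1)\,x^n = k!\binom{n}{k}x^n$ for $k\le n$, and $0$ otherwise. Hence
\[
\left(\sum_{k=0}^\infty \frac{g_k^*(-1)}{k!} x^k D^k\right)\! x^n = \sum_{k=0}^n \binom{n}{k} g_k^*(-1)\, x^n = \gamma_n x^n,
\]
where the final equality is the Jensen inversion identity from the previous step. Since $T$ is the classical diagonal differential operator with eigenvalue sequence $\{\gamma_n\}_{n=0}^\infty$, this agrees with $T[x^n]=\gamma_n x^n$.

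No serious obstacle arises: every step is a finite sum or a Cauchy product of formal power series, and the inversion identity is the glue that links the generating-function identities to the Jensen and operator versions. The only care required is keeping the sign conventions and the indexing of the reversed Jensen polynomials consistent throughout.
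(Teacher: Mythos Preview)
Your argument is correct and follows the natural route: Cauchy product for the generating-function identity, multiplication by $e^{-x}$ for the inversion, specialization to $g_n^*(-1)$, and direct computation of $x^kD^k[x^n]$ for the operator statement. The paper does not give its own proof of this theorem; it is presented as a compilation of known identities with citations to Riordan and Piotrowski, so there is nothing further to compare.
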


\vspace{.25in}
\section{Operator Diagonalizations of Diagonalizable Operators}

Our main objective is to present a new representation of diagonal differential operators (Theorem \ref{thm:classic}). We will only need to assume that $\deg(Q_k(x))\le k$ for each $k\in\N_0$; a property that all diagonal differential operators have, as the recursive formula of Theorem \ref{thm:piotr} shows (see also \cite{Bat14a}). 

\begin{theorem}\label{thm:classic}
Given a linear operator, $T:\R[x]\to\R[x]$, 
\beq
T=\sum_{k=0}^\infty Q_k(x)D^k,
\eeq
where $\deg(Q_k(x))\le k$ for every $k\in\N_0$. Define the family of sequences, 
\beq\label{eq:aform}
\{b_{n,k}\}_{k=0}^\infty:=\left\{\sum_{j=0}^k\binom{k}{j}Q_{j+n}^{(j)}(0)\right\}_{k=0}^\infty,\ \ \ \ \ \ \ \ n\in\N_0. 
\eeq
For each $n\in\N_0$, define the classical diagonal differential operator, 
\beq\label{eq:tn}
T_n[x^k]:=b_{n,k}x^k. 
\eeq
Then, 
\beq\label{eq:diawriting}
T=\sum_{n=0}^\infty T_nD^n. 
\eeq
Furthermore, the representation in \eqref{eq:diawriting} is unique. 
\end{theorem}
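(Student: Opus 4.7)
The plan is to exploit the hypothesis $\deg Q_k(x) \le k$ by expanding each $Q_k$ in its (finite) Taylor series about the origin, $Q_k(x) = \sum_{j=0}^k \frac{Q_k^{(j)}(0)}{j!} x^j$, and then regrouping the resulting double sum by the index $n = k-j$. Concretely, I would substitute into $T = \sum_{k=0}^\infty Q_k(x) D^k$ to obtain
\beq
T = \sum_{k=0}^\infty \sum_{j=0}^k \frac{Q_k^{(j)}(0)}{j!} x^j D^k = \sum_{n=0}^\infty \left(\sum_{j=0}^\infty \frac{Q_{j+n}^{(j)}(0)}{j!} x^j D^j\right) D^n,
\eeq
where the rearrangement is harmless because, when any polynomial is inserted, only finitely many of the terms act nontrivially. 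This is the only genuine manipulation in the proof, and it isolates a candidate inner operator $S_n := \sum_{j=0}^\infty \frac{Q_{j+n}^{(j)}(0)}{j!}x^j D^j$ that I must identify with $T_n$.

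Next I would verify $S_n = T_n$. Using $x^j D^j[x^k] = \binom{k}{j} j! \, x^k$ for $j \le k$, one computes directly
\beq
S_n[x^k] = \sum_{j=0}^k \binom{k}{j} Q_{j+n}^{(j)}(0)\, x^k = b_{n,k}\, x^k,
\eeq
which matches the definition \eqref{eq:tn} of $T_n$. (Alternatively, one may observe that the shape $\sum_{j=0}^\infty \frac{a_{n,j}}{j!}x^j D^j$ is precisely the differential representation of a classical diagonal differential operator supplied by \eqref{eq:diagst} in Theorem~\ref{thm:comb}, with $a_{n,j} = Q_{j+n}^{(j)}(0)$, and then \eqref{eq:jenreverse} inverts this to recover the eigenvalues $b_{n,k}$.) This establishes \eqref{eq:diawriting}.

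For uniqueness, I would apply any purported decomposition $T = \sum_{n=0}^\infty \widetilde{T}_n D^n$, with $\widetilde{T}_n$ classical diagonal and $\widetilde{T}_n[x^k] = \widetilde{b}_{n,k}\,x^k$, to the monomial $x^m$. Only terms with $n \le m$ survive, and
\beq
T[x^m] = \sum_{n=0}^m \frac{m!}{(m-n)!}\, \widetilde{b}_{n,m-n}\, x^{m-n}.
\eeq
Matching coefficients of $x^{m-n}$ against the analogous expression for the decomposition built above forces $\widetilde{b}_{n,m-n} = b_{n,m-n}$ for each pair $(n,m)$, and hence $\widetilde{T}_n = T_n$ for every $n$.

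The only delicate point is the legitimacy of the index swap between $k$ and $(j,n)$, but this is immediate because both sides are linear operators on $\R[x]$ and each polynomial input activates only finitely many summands; no analytic convergence enters. The rest is bookkeeping driven by $\deg Q_k \le k$, the elementary action of $x^j D^j$ on monomials, and the inversion identity of Theorem~\ref{thm:comb}.
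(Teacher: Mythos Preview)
Your proposal is correct and follows essentially the same route as the paper: both arguments rest on the identity $T_n=\sum_{j\ge 0}\frac{Q_{j+n}^{(j)}(0)}{j!}\,x^jD^j$ and the observation that these pieces exactly partition the monomial terms of the $Q_k$'s (using $\deg Q_k\le k$). The only cosmetic difference is that the paper derives uniqueness by invoking the uniqueness of the $Q_k$'s from Theorem~\ref{thm:piotr}, whereas you argue it directly by evaluating at $x^m$; both are valid.
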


\begin{proof}
We are concerning ourselves with operators defined on $\R[x]$, hence convergence discussions are a non-issue. By Theorem \ref{thm:comb}, for every $n\in\N_0$, we know the differential representation of $T_n$, namely,  
\beq\label{eq:formula}
T_n=\sum_{k=0}^\infty \left(\sum_{j=0}^k \binom{k}{j}b_{n,j}(-1)^{k-j}\right)\frac{1}{k!}x^kD^k= \sum_{k=0}^\infty \frac{Q_{k+n}^{(k)}(0)}{k!}x^kD^k. 
\eeq
Note the calculation $\frac{Q_{k+n}^{(k)}(0)}{k!}x^k$ is precisely the $k^{\text{th}}$ term of the polynomial, $Q_{k+n}(x)$. Hence, each summand, in each $T_n$, is one term from some $Q_k(x)$. Furthermore, no two $T_n$'s use the same term in a particular $Q_k(x)$. Finally, because $\deg(Q_k(x))\le k$, we are assured that every term in every $Q_k(x)$ will be present in some $T_n$. The uniqueness follows from the uniqueness of the differential representation in Theorem \ref{thm:piotr}. 
\end{proof}

\begin{example}
Theorem \ref{thm:classic} can be best understood with the aid of a concrete illustrative. Define the differential operator, 
\beq
T:=\underset{Q_2(x)}{\underbrace{(a_2x^2+b_1x+c_0)}}D^2+\underset{Q_1(x)}{\underbrace{(a_1x+b_0)}}D+\underset{Q_0(x)}{\underbrace{(a_0)}}, 
\eeq
where $a_2,a_1,a_0,b_1,b_0,c_0\in\R$. Using Theorem \ref{thm:classic}, we re-write $T$, in terms of $T_n$'s, 
\beq
\begin{array}{rcl}
T &=& \left(\frac{Q_2^{(2)}(0)}{2!}x^2D^2+\frac{Q_1^{(1)}(0)}{1!}x^1D^1+\frac{Q_0^{(0)}(0)}{0!} x^0D^0\right) D^0\ + 
\\&&
\\&&\left(\frac{Q_2^{(1)}(0)}{1!}x^1D^1+\frac{Q_1^{(0)}(0)}{0!}x^0D^0\right) D^1 \ +
\\&&
\\&&\left(\frac{Q_2^{(0)}(0)}{0!}x^0D^0\right) D^2
\\&&
\\&=& \underset{\mathcol{T_0}}{\underbrace{(a_2x^2D^2+a_1xD+a_0)}}+\underset{\mathcol{T_1}}{\underbrace{(b_1xD+b_0)}}D+\underset{\mathcol{T_2}}{\underbrace{(c_0)}}D^2.
\end{array}
\eeq
\end{example}

Theorem \ref{thm:classic} can be extended to arbitrary linear operators on $\R[x]$; reminiscent of a Laurent series from complex variables (see \cite[p. 222]{MH99}). 

\begin{theorem}\label{thm:classic2}
Let $T:\R[x]\to\R[x]$ be an arbitrary linear operator, 
\beq
T:=\sum_{k=0}^\infty Q_k(x)D^k. 
\eeq
Define the family of sequences, 
\beq
\{b_{n,k}\}_{k=0}^\infty:=\left\{\sum_{j=0}^k\binom{k}{j}Q_{j+n}^{(j)}(0)\right\}_{k=0}^\infty,\ \ \ \ \ \ n\in\Z, 
\eeq
where we take $Q_{j+n}^{(j)}(0)=0$ for $n+j<0$. For each $n\in\Z$, define the classical diagonal differential operator, 
\beq
T_n[x^k]:=b_{n,k}x^k. 
\eeq
Then, 
\beq\label{eq:diawriting2}
T=\sum_{n=1}^{\infty} T_{-n}D^{-n} + \sum_{n=0}^\infty T_nD^n, 
\eeq
where we define $D\cdot D^{-1}=1$. Furthermore, the representation in \eqref{eq:diawriting2} is unique. 
\end{theorem}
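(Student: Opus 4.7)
The plan is to mimic the proof of Theorem~\ref{thm:classic} almost verbatim, treating negative indices $n$ the same way as non-negative ones. The key point that makes this work is the convention $Q_{j+n}^{(j)}(0)=0$ for $j+n<0$ built into the definition of $b_{n,k}$: it forces every monomial term $x^k D^{k+n}$ appearing in $T_n D^n$ to satisfy $k+n\ge 0$, so no genuine $D^{-1}$ ever acts on a polynomial, despite the formal notation $D^{-n}$ for $n<0$. Unlike Theorem~\ref{thm:classic}, we cannot restrict to $n\ge 0$ because terms of $Q_j(x)$ of degree strictly greater than $j$ force the use of $T_n$ with $n=j-m<0$.

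First, for each $n\in\Z$ I would apply Theorem~\ref{thm:comb} to the eigenvalue sequence $\{b_{n,k}\}_{k=0}^\infty$, exactly as in \eqref{eq:formula}, to obtain the differential representation
\[
T_n = \sum_{k=0}^\infty \frac{Q_{k+n}^{(k)}(0)}{k!}\,x^k D^k,
\]
in which the terms with $k+n<0$ vanish by convention. Formally multiplying by $D^n$ and invoking the rule $D\cdot D^{-1}=1$ yields
\[
T_n D^n \;=\; \sum_{k\ge \max(0,-n)} \frac{Q_{k+n}^{(k)}(0)}{k!}\,x^k D^{k+n},
\]
in which every power of $D$ is non-negative. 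Summing over all $n\in\Z$ and reindexing by $j=k+n\ge 0$ gives
\[
\sum_{n=1}^\infty T_{-n}D^{-n} + \sum_{n=0}^\infty T_n D^n \;=\; \sum_{j=0}^\infty \Bigl(\sum_{k=0}^{\deg Q_j} \frac{Q_j^{(k)}(0)}{k!}\,x^k\Bigr) D^j \;=\; \sum_{j=0}^\infty Q_j(x) D^j \;=\; T,
\]
which is the original differential representation of $T$.

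Uniqueness follows from Theorem~\ref{thm:piotr}: if two such decompositions agree, then reading off the coefficient of $x^k D^{k+n}$ in the underlying differential representation pins down $b_{n,k}$ for every pair $(n,k)$, and hence each diagonal operator $T_n$. The main obstacle I anticipate is verifying that the double sum genuinely defines a linear operator on $\R[x]$; this reduces to checking that for any fixed $x^p$ only finitely many $T_n D^n x^p$ are nonzero, which holds because $\max_{0\le j\le p}\deg Q_j$ is finite and this quantity bounds how negative $n$ can be while still producing a nonzero contribution. Once that finiteness is recorded, the formal computation above is rigorous and the theorem follows.
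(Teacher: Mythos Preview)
Your proposal is correct and follows essentially the same route as the paper: invert the eigenvalue sequence via Theorem~\ref{thm:comb} to obtain $T_n=\sum_{k\ge 0}\frac{Q_{k+n}^{(k)}(0)}{k!}x^kD^k$, then reindex so that each monomial $x^kD^{k+n}$ of $T_nD^n$ is matched with exactly one Taylor term of some $Q_j(x)D^j$, and invoke Theorem~\ref{thm:piotr} for uniqueness. Your treatment is in fact slightly more careful than the paper's, since you explicitly verify that only finitely many $T_nD^n$ act nontrivially on a given $x^p$ and that no genuine negative power of $D$ ever appears.
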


\begin{proof}
We first note that for each $n\in\N_0$, $ T_n=\sum_{k=0}^\infty \frac{Q_{k+n}^{(k)}(0)}{k!}x^kD^k$ (see Theorem \ref{thm:comb}). Similar to the proof of Theorem \ref{thm:classic}, each term from the $T_n$'s are in one-to-one correspondence with each term in the $Q_k$'s. Thus, a change of index yields, 
\beq
T=\sum_{n=0}^\infty Q_n(x)D^n = \sum_{n=0}^\infty \left(\sum_{k=0}^\infty \frac{Q_k^{(k)}(0)}{k!}x^k\right) D^n = \sum_{n=-\infty}^\infty \left(\sum_{k=0}^\infty \frac{Q_{k+n}^{(k)}(0)}{k!}x^k\right) D^{k+n} = \sum_{n=-\infty }^\infty T_n D^n. \nonumber\tag*{\qedhere}
\eeq
\end{proof}

\begin{example}
We provide another example demonstrating Theorem \ref{thm:classic2}. Define the differential operator, 
\beq
T:=(a_2x^2+b_1x+c_0)D^2+(z_1x^2+a_1x+b_0)D+(y_0x^2+z_0x+a_0), 
\eeq
where $y_0,z_1,z_0,a_2,a_1,a_0,b_1,b_0,c_0\in\R$. Using Theorem \ref{thm:classic2}, we rewrite $T$ in terms of $T_n$'s, 
\begin{align}
T=& \underset{T_{-2}}{\underbrace{(y_0x^2D^2)}}\mathcol{D^{-2}}\ +               
\\& \underset{T_{-1}}{\underbrace{(z_1x^2D^2+z_0xD)}}\mathcol{D^{-1}}\ +        
\\& \underset{T_0}{\underbrace{(a_2x^2D^2+a_1xD+a_0)}}\mathcol{D^0}\ +
\\& \underset{T_1}{\underbrace{(b_1xD+b_0)}}\mathcol{D^1}\ +
\\& \underset{T_2}{\underbrace{(c_0)}}\mathcol{D^2}\ . 
\end{align}
\end{example}

\begin{example}
It is possible for representation \eqref{eq:diawriting2} to be ``transcendental'' in both directions. Consider the differential operator, 
\beq
T:=\sum_{k=0}^\infty (x^{2k}+1)D^k. 
\eeq
Then for $n\in\N$, $T_{-n}=x^{2n}D^{2n}$ and for $n\in\N_0$, $T_{n}=1$. Hence, 
\begin{align}
T=& \cdots+T_{-2}D^{-2}+T_{-1}D^{-1}+T_0 D^0+T_1D^1+T_2D^2+\cdots \\
=&\cdots+(x^4D^4)D^{-2}+(x^2D^2)D^{-1}+(1)D^0+(1)D^1+(1)D^2+\cdots. 
\end{align} 
\end{example}

Upon attaining the representation \eqref{eq:diawriting} in Theorem \ref{thm:classic}, we direct our attention to the property of hyperbolicity preservation. If $T$ in equation \eqref{eq:diawriting}, is hyperbolicity preserving, then what properties do the $T_n$'s possess? One might hope that the $T_n$'s also enjoy the property of hyperbolicity preservation. This hope would certainly be warranted since, in fact, $T_0$ always possess the property of hyperbolicity preservation in a diagonal differential operator (see \cite{Bat14a} and \cite[Theorem 158, p. 145]{Pio07}). In addition, classical multiplier sequences and operators of the form $f(xD)$ and $f(D)$, from the Hermite-Poulain \cite[p. 4]{Obr63} and Laguerre Theorems \cite[Satz 3.2]{Obr63}, trivially have $T_n$'s that are hyperbolicity preserving. However, in general, our hope is false as the next several examples will demonstrate. The following Tur\'an type inequality, equation \eqref{eq:tti}, will be of great use. 

\begin{theorem}[{R. Bates and R. Yoshida \cite[(2013)]{BY13}}]\label{thm:BYform}
Let $a,b,c,r_1,r_2,r_3\in\R$. Define polynomials $Q_2(x)=a(x-r_1)(x-r_2)$, $Q_1(x)=b(x-r_3)$, and $Q_0(x)=c$. Then $T$ is hyperbolicity preserving, where 
\beq
T:=Q_2(x)D^2+Q_1(x)D+Q_0(x), 
\eeq 
if and only if $a, b, c$ are of the same sign and
\beq\label{eq:tti}
b^2\left(\frac{(r_1-r_3)(r_3-r_2)}{(r_1-r_2)^2}\right)-ac\ge 0. 
\eeq
We take $\left(\frac{(r_1-r_3)(r_3-r_2)}{(r_1-r_2)^2}\right)=\frac{1}{4}$ when $r_1=r_2=r_3$. If $r_1=r_2$ and $r_1\not=r_3$, then $T$ is not hyperbolicity preserving. 
\end{theorem}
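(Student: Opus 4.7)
The plan is to pass to the symbol polynomial
\[F(x,w) := Q_0(x) + Q_1(x)w + Q_2(x)w^2 = c + b(x-r_3)w + a(x-r_1)(x-r_2)w^2,\]
which satisfies $T[e^{xw}] = F(x,w)\, e^{xw}$. By the Borcea-Br\"and\'en framework sketched in the introduction, $T$ is hyperbolicity preserving precisely when one of $F(x,w)$ or $F(x,-w)$ is a real stable polynomial of bidegree $(2,2)$. This reduces the theorem to an algebraic question about this specific symbol.

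First I would extract the sign conditions by testing $T$ on $x^n$: comparing the leading, middle, and trailing coefficients of $T[x^n]$ as $n$ varies forces $a$, $b$, $c$ to share a common sign (or to vanish); replacing $T$ with $-T$ I may assume $a, b, c \ge 0$. Next, a necessary condition for either alternative of real stability is that for every real $w$, the polynomial $F(\cdot,w)$ is hyperbolic in $x$ (and this condition is symmetric under $w \mapsto -w$, hence applies uniformly to both alternatives). A direct computation of the $x$-discriminant of $F$ yields
\[\Delta_x(w) = w^2\Bigl[a^2(r_1-r_2)^2\,w^2 - 2ab(r_1+r_2-2r_3)\,w + (b^2 - 4ac)\Bigr].\]
When $r_1 \neq r_2$, the bracketed expression is a $w$-quadratic with positive leading coefficient, so its nonnegativity on $\R$ is equivalent to nonpositivity of its discriminant, which, via the identity $(r_1+r_2-2r_3)^2 - (r_1-r_2)^2 = 4(r_1-r_3)(r_2-r_3)$, collapses exactly to the Tur\'an-type inequality \eqref{eq:tti}.

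The degenerate cases fall out of the same bracket: if $r_1 = r_2$ the $w^2$-coefficient of the bracket vanishes, forcing the linear-in-$w$ coefficient $r_1+r_2-2r_3$ to vanish as well (so $r_3 = r_1 = r_2$) and reducing the remaining condition to $b^2 \ge 4ac$, precisely the $\tfrac{1}{4}$ convention; whereas if $r_1 = r_2 \neq r_3$, the linear term cannot be killed and $\Delta_x(w)$ changes sign on $\R$, giving non-preservation. For sufficiency, I would then need to upgrade $w$-slice hyperbolicity of $F$ (equivalent to \eqref{eq:tti}) to full real stability of one of the two candidate symbols; this can be carried out by a Hermite-Biehler interlacing argument applied to the polynomial coefficients of $F$ viewed in $w$. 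The main obstacle is exactly this sufficiency upgrade. An attractive self-contained alternative that bypasses it entirely is to apply $T$ to the family $(x-t)^n$, obtain the factorization $T[(x-t)^n] = (x-t)^{n-2}\bigl[n(n-1)Q_2(x) + n Q_1(x)(x-t) + Q_0(x)(x-t)^2\bigr]$, verify directly that the bracketed quadratic in $x$ is hyperbolic for all $t \in \R$ and $n \ge 2$, and observe that its limiting $x$-discriminant as $n \to \infty$ recovers \eqref{eq:tti} exactly; HP on arbitrary hyperbolic polynomials then follows by a standard limit/interlacing argument since any hyperbolic polynomial is a limit of convex combinations of products of factors of the form $(x-t)$.
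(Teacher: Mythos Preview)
The paper does not prove this theorem: it is quoted verbatim from \cite{BY13} and used only as a tool in Examples~\ref{ex:qua}--\ref{ex:notdia}. So there is no ``paper's own proof'' to compare against, and your proposal stands or falls on its own.

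Your necessary direction is essentially correct and nicely organized. The discriminant computation is right, and the identity $(r_1+r_2-2r_3)^2-(r_1-r_2)^2=4(r_1-r_3)(r_2-r_3)$ indeed collapses the nonnegativity of the bracketed quadratic to \eqref{eq:tti}; the degenerate analysis for $r_1=r_2$ is also fine.

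The sufficiency direction, however, has two genuine gaps. First, real-$w$ slice hyperbolicity of $F(x,w)$ is strictly weaker than real stability of $F$ (or of $F(x,-w)$): the Borcea--Br\"and\'en criterion requires nonvanishing for \emph{all} $x,w$ with $\Im x>0$, $\Im w>0$, not merely real roots in $x$ for each real $w$. You acknowledge this as ``the main obstacle,'' but the Hermite--Biehler sketch you offer is not an argument; for this particular $F$ you would still need to verify, e.g., proper interlacing of the $w$-coefficient polynomials together with the correct sign pattern, and that is where the sign condition on $a,b,c$ and the inequality \eqref{eq:tti} must be used jointly.

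Second, the alternative route via $T[(x-t)^n]$ does not close the gap either. The factorization you wrote is correct, and the bracketed quadratic can be analyzed, but the final sentence is where the argument breaks: an arbitrary hyperbolic polynomial is \emph{not} a limit of convex combinations of powers $(x-t)^n$, and hyperbolicity is not preserved under sums in any case, so there is no ``standard limit/interlacing argument'' that promotes hyperbolicity of $T[(x-t)^n]$ for all $t$ and $n$ to hyperbolicity of $T[\prod_i(x-t_i)]$. To make a direct argument work you would need to handle genuine products of distinct linear factors, which for a second-order operator amounts to a careful induction with an interlacing hypothesis --- substantially more than what you have sketched.
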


\begin{remark}
For clarity, we point out that the condition that $a,b,c$ be of the same sign, in Theorem \ref{thm:BYform}, cannot be removed. For example, the following operator satisfies equation \eqref{eq:tti} but not the necessary sign condition of the leading coefficients, 
\beq
T:=(x-1)(x+1)D^2-2xD+1.
\eeq
Hence, $T$ is not hyperbolicity preserving, as can be seen since $T[x^2]=-x^2-2$. 
\end{remark}

\begin{example}\label{ex:qua}
Consider the following differential operator, 
\begin{align}
T:=&(x-2)(x+1)D^2+3(x+1/2)D+1 \\
=& (-2)D^2+(-xD+3/2)D+(x^2D^2+3xD+1) \\
=& T_2D^2+T_1D+T_0. 
\end{align}
By an application of Theorem \ref{thm:BYform}, operator $T$ is certainly hyperbolicity preserving, 
\beq
3^2\left(\frac{\lr{2-(-1/2)}\lr{(-1/2)-(-1)}}{\lr{(-1)-2}^2}\right)-1\cdot 1 = \frac{1}{4} \ge 0. 
\eeq
However, $T_1=-xD+3/2$ (see \eqref{eq:tn}) is not a hyperbolicity preserver, since $T_1[x^2-1]=(-1/2)x^2-3/2$. 
\end{example}

\begin{example}\label{ex:leg}
Consider the Legendre basis of polynomials, $\{P_n(x)\}_{n=0}^\infty$, that satisfy the differential equation (Definition \ref{def:poly}), 
\beq\label{eq:legendrehp}
((x^2-1)D^2+(2x)D+1)P_n(x) = (n^2+n+1)P_n(x). 
\eeq
Equation \eqref{eq:legendrehp} was first verified to be hyperbolicity preserving by K. Blakeman, E. Davis, T. Forg\'acs, and K. Urabe \cite[Lemma 5]{BDFU12}. We re-verify that $(x^2-1)D^2+(2x)D+1$ is a hyperbolicity preserver using the calculation in Theorem \ref{thm:BYform}, 
\beq
2^2\left(\frac{(1-0)(0-(-1))}{(-1-1)^2}\right)-1\cdot 1 = 1-1=0 \ge 0. 
\eeq
Hence, compositions are hyperbolicity preserving, and thus, $T$ is hyperbolicity preserving, where $T[P_n(x)]:=(n^2+n+1)^3P_n(x)$. We calculate the differential form of $T$ (see Theorem \ref{thm:piotr}),   
\begin{align}
T=&((x^2-1)D^2+(2x)D+1)^3 
\\=&(x^6-3x^4+\mathcol{3x^2}-1)D^6+
\\&(18x^5-36x^3+\mathcol{18x})D^5+
\\&(101x^4-130x^2+\mathcol{29})D^4+
\\&(208x^3-160x)D^3+
\\&(145x^2-57)D^2+
\\&(26x)D+
\\&1.
\end{align}
Consider the highlighted terms of from above to calculate $T_4$ (see \eqref{eq:tn}), 
\beq
T_4=3x^2D^2+18xD+29. 
\eeq
From Theorem \ref{thm:BYform} we infer that operator $T_4$ fails to be hyperbolicity preserving, \beq
18^2\left(\frac{1}{4}\right)-3\cdot 29 = 81 - 87 = -6 < 0. 
\eeq
\end{example}

\begin{example}\label{ex:sher}
Due to A. Piotrowski (see \cite[Lemma 157, p. 145]{Pio07}), affine transforms ($\{c_nB_n(\alpha x+\beta)\}_{n=0}^\infty$, $c_n,\alpha,\beta\in\R$, $c_n,\alpha\not=0$) share the same multiplier sequence class as the basis $\{B_n(x)\}_{n=0}^\infty$. Let us consider then an affine transform of the Hermite polynomials, $\{H_n(x\pm 3)\}_{n=0}^\infty$, and a multiplier sequence for these shifted Hermite polynomials, $\{n^2+n+1\}_{n=0}^\infty$ (see Theorem \ref{thm:msherm}). Thus $T$ is hyperbolicity preserving, where $T[H_n(x\pm 3)]=(n^2+n+1)H_n(x\pm 3)$. We calculate the differential form of $T$ (see Theorem \ref{thm:piotr}),
\beq\label{eq:shercalc}
T = \left(\frac{1}{4}\right)D^4+\lr{\mathcol{-x}\mp 3}D^3+\lr{x^2\pm 6x+\mathcol{\frac{15}{2}}}D^2+\lr{2x\pm 6}D+\lr{1}.
\eeq
From the highlighted items in \eqref{eq:shercalc} we formulate $T_2=-xD+15/2$ (see \eqref{eq:tn}) and note that $T_2$ is not hyperbolicity preserving since $T[2x^8-2x^6]=-x^8-3x^6$. 

It is intriguing to see that while affine transforms share multiplier sequence classes, the $T_n$'s in equation \eqref{eq:diawriting} may not share in the property of hyperbolicity preservation. Hence, as we will see in Theorem \ref{thm:hermiteissums} and \ref{thm:herm}, the Hermite polynomials are distinguished amongst all affine transforms of the Hermite polynomials. 
\end{example}

\begin{example}\label{ex:slag}
Consider the shifted Laguerre polynomials (see \cite[Lemma 157, p. 145]{Pio07}), $\{L_n(x+2)\}_{n=0}^\infty$, and a multiplier sequence for these shifted Laguerre polynomials, $\{n\}_{n=0}^\infty$ (see Theorem \ref{thm:mslag}). Thus $T$ is hyperbolicity preserving, where $T[L_n(x+2)]=nL_n(x+2)$ and 
\beq
T = (\mathcol{-x}-2)D^2+(x+\mathcol{1})D+(0).
\eeq
Consider the operator formed by the highlighted terms, $T_1=-xD+1$. Operator $T_1$ fails to preserve hyperbolicity since $T_1[x^2-1]=-x^2-1$. (See also Question \ref{que:que5} in the open problems.) 
\end{example}

\begin{example}\label{ex:malo}
A more technical example is the following. Using the generalized Malo-Schur-Szeg\"o Composition Theorem \cite{Bru49,CC04} it can be shown that, given $p(x)=(x+1)^3$, 
\begin{align}
T:&=-\frac{1}{6}p'''(x)D^3+\frac{1}{2}p''(x)D^2-p'(x)D+p(x) \\
 &=-D^3+(\mathcol{3x}+3)D^2+(-3x^2-6x\mathcol{-3})D+(x^3+3x^2+3x+1)
\end{align}
is hyperbolicity preserving \cite[p. 47]{Yos13}. Define $T_{1}:=3xD-3$ (see \eqref{eq:tn}) and note that $T_{1}[x^2-1]=3x^2+3$, thus $T_1$ is not hyperbolicity preserving. 
\end{example}

\begin{example}\label{ex:notdia}
Another example involving $Q_k$'s, where $\deg(Q_k(x))>k$ for some of the $k$'s. Using the Hermite-Poulain Theorem \cite[p. 4]{Obr63} it can be shown that the non-diagonalizable operator, 
\beq
T:=(\mathcol{x^2}+2x+1)D^2-(x^2+2x+\mathcol{1}), 
\eeq
preservers hyperbolicity. The operator $T_0=x^2D^2-1$ (see \eqref{eq:tn}) is not a hyperbolicity preserver, since $T_0[x^2-1]=x^2+1$. This example is even more interesting considering the fact that, in general, $W_0$ is always hyperbolicity preserving, whenever $W$ is any arbitrary diagonal differential hyperbolicity preserver (see \cite{Bat14a}). 
\end{example}

By now the reader has hopefully been convinced that Examples \ref{ex:qua}-\ref{ex:notdia} demonstrate the very high sensitivity of the following results; namely, for Hermite or Laguerre multiplier sequences the $T_n$'s in \eqref{eq:tn} from Theorem \ref{thm:classic} are hyperbolicity preservers. It is surprising, that not only will each $T_n$ be hyperbolicity preserving, the family of sequences, $\{b_{n,k}\}_{k=0}^\infty$ (see \eqref{eq:aform}), turn out to be more Hermite or Laguerre multiplier sequences, respectively. In this sense every Hermite or Laguerre multiplier sequence generates an entire family of additional Hermite or Laguerre multiplier sequences.

\vspace{.25in}
\section{Operator Diagonalizations of Hermite Multiplier Sequences}

Our main goal in this section is to demonstrate for hyperbolicity preserving Hermite diagonal differential operators, each $T_n$ defined in Theorem \ref{thm:classic} is hyperbolicity preserving. This will be done in two phases. First we will find a formula for $b_{n,k}$ (see \eqref{eq:aform}). Second, we will show that, for each $n\in\N_0$, $\{b_{n,k}\}_{k=0}^\infty$ a Hermite multiplier sequence and hence $\{b_{n,k}\}_{k=0}^\infty$ is also a classical multiplier sequence, i.e. each $T_n$ is hyperbolicity preserving. 

\begin{lemma}\label{lem:zeroherm}
For $k,j\in\N_0$, the $k^\text{th}$ derivative of the $(k+2j+1)^\text{th}$ and $(k+2j)^\text{th}$ Hermite polynomials \textup{(}see Definition \ref{def:poly}\textup{)} evaluated at zero is, 
\beq
H_{k+2j+1}^{(k)}(0)=0\ \ \ \text{and}\ \ \ H_{k+2j}^{(k)}(0)=\frac{(k+2j)!2^k(-1)^j}{j!}. 
\eeq
\end{lemma}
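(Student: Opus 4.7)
The plan is to read off both identities directly from the explicit expansion
\[
H_n(x)=\sum_{i=0}^{[n/2]}\frac{(-1)^i n!\,2^{n-2i}}{i!(n-2i)!}\,x^{n-2i}
\]
recorded in Definition \ref{def:poly}. The guiding observation is that every monomial appearing in $H_n(x)$ has the same parity of exponent as $n$, since the exponents are $n-2i$. Consequently, the $k^{\text{th}}$ derivative of $H_n$ evaluated at zero isolates (up to a factor of $k!$) the coefficient of $x^k$ in $H_n$, and this coefficient can be nonzero only when $k$ and $n$ have matching parity.

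For the first identity, I would take $n=k+2j+1$, so $n$ and $k$ differ in parity; every $x^{n-2i}$ has exponent with parity opposite to $k$, so the coefficient of $x^k$ is zero and hence $H_{k+2j+1}^{(k)}(0)=0$.

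For the second identity, take $n=k+2j$. The equation $n-2i=k$ has the unique solution $i=j$ in the admissible range $0\le i\le [n/2]$, so the coefficient of $x^k$ in $H_{k+2j}(x)$ is
\[
\frac{(-1)^j (k+2j)!\,2^{(k+2j)-2j}}{j!\,(k+2j-2j)!}=\frac{(-1)^j (k+2j)!\,2^k}{j!\,k!}.
\]
Multiplying by $k!$ to account for the $k^{\text{th}}$ derivative yields the claimed value
\[
H_{k+2j}^{(k)}(0)=\frac{(k+2j)!\,2^k(-1)^j}{j!}.
\]
No step here is a real obstacle; the only thing to be careful about is verifying that $i=j$ does lie within the summation range $0\le i\le [(k+2j)/2]$ and is the unique index producing an $x^k$ term, which is immediate.
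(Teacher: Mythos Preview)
Your argument is correct. The paper actually states Lemma~\ref{lem:zeroherm} without proof, treating it as an immediate consequence of the explicit formula for $H_n(x)$ in Definition~\ref{def:poly}; your computation is exactly the intended elementary verification.
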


\begin{theorem}\label{thm:hermiteissums}
Let $T$ be a Hermite diagonal differential operator, $T[H_n(x)]:=\gamma_n H_n(x)$, where $\{\gamma_n\}_{n=0}^\infty$ a sequence of real numbers. Then there is a sequence of polynomials, $\{Q_k(x)\}_{k=0}^\infty$, and a sequence of classical diagonal differential operators, $\{T_n\}_{n=0}^\infty$, such that 
\beq\nonumber
T[H_n(x)]:=\left(\sum_{k=0}^\infty Q_k(x)D^k\right) H_n(x) = \left(\sum_{k=0}^\infty T_k D^k \right) H_n(x) = \gamma_n H_n(x). 
\eeq
Then, for each $n\in\N_0$, 
\beq\nonumber
\{b_{2n+1,m}\}_{m=0}^\infty = \{0\}_{m=0}^\infty
\eeq
and
\beq\nonumber
\{b_{2n,m}\}_{m=0}^\infty:=\left\{\sum_{k=0}^m \binom{m}{k} \frac{(-1)^n}{n!2^n} \left(\sum_{j=0}^n \binom{n}{j} \frac{g_{k+n+j}^*(-1)}{2^{j}}\right)\right\}_{m=0}^\infty, 
\eeq
where $T_n[x^m]=b_{n,m}x^m$ for every $n,m\in\N_0$. 
\end{theorem}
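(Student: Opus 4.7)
The plan is to combine Theorem \ref{thm:classic} (which applies since $\deg Q_k \le k$ for any diagonal differential operator) with the eigenvalue relation $T[H_n]=\gamma_n H_n$. Writing $T = \sum_{k \ge 0} T_k D^k$ with $T_k[x^m] = b_{k,m} x^m$, and invoking the elementary Hermite identity $D^k H_n = 2^k\,\tfrac{n!}{(n-k)!}\,H_{n-k}$ (which is equivalent to Lemma \ref{lem:zeroherm}), the eigenvalue equation becomes
\[
\sum_{k=0}^{n} 2^k\, \frac{n!}{(n-k)!}\, T_k[H_{n-k}(x)] = \gamma_n H_n(x).
\]
I would then expand both $H_{n-k}(x)$ and $H_n(x)$ in the monomial basis via Definition \ref{def:poly}, turning each side into an explicit polynomial in $x$, and match the coefficient of $x^m$ for each $0 \le m \le n$.

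The structural observation that drives everything is that in the matching equation at $x^m$, only those $b_{k,m}$ with $k + 2i = n-m$ (for some $i \ge 0$) appear; consequently $k \equiv n-m \pmod 2$, and the odd-index and even-index systems decouple. When $n-m$ is odd the right-hand side contributes nothing (since $H_n$ contains only monomials $x^{n-2j}$), so the matching is a triangular homogeneous system in $\{b_{2\ell+1, m}\}_{\ell \ge 0}$ with nonzero leading coefficients (powers of $-4$ divided by factorials). Back-substitution then forces $b_{2\ell+1,m} = 0$ for every $\ell$ and $m$, which is the first claim.

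For the even case I would set $n = m + 2j$; after simplification the matching reduces to the triangular relation
\[
\sum_{\ell=0}^{j} \binom{j}{\ell}\, \ell!\, (-4)^\ell\, b_{2\ell,m} = \gamma_{m+2j}.
\]
Setting $\alpha_\ell := \ell!\,(-4)^\ell b_{2\ell,m}$ and $\beta_j := \gamma_{m+2j}$, this is precisely the binomial transform of Theorem \ref{thm:comb}, whose inverse yields
\[
b_{2n,m} = \frac{1}{n!\,4^n} \sum_{\ell=0}^{n} \binom{n}{\ell} (-1)^\ell\, \gamma_{m+2\ell}.
\]

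To convert into the $g^*_\cdot(-1)$ form stated, I would substitute $\gamma_r = \sum_s \binom{r}{s} g^*_s(-1)$ from \eqref{eq:jenreverse}, expand $\binom{m+2\ell}{s}$ via Vandermonde, and reduce the inner sum to the combinatorial identity
\[
\sum_{\ell=0}^{n} \binom{n}{\ell}(-1)^\ell \binom{2\ell}{p} = (-1)^n \binom{n}{p-n}\, 2^{\,2n-p} \qquad (n \le p \le 2n),
\]
with the sum vanishing outside this range. This drops out of $\sum_\ell \binom{n}{\ell}(-(1+x)^2)^\ell = (1-(1+x)^2)^n = (-1)^n x^n (x+2)^n$ by extracting the coefficient of $x^p$. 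After reindexing $p = n+i$ and $s = k+n+i$, the powers $2^{n-i}/4^n = 1/2^{n+i}$ combine cleanly with the surviving $\binom{m}{k}$ factor to produce exactly the stated formula. The main obstacle is the bookkeeping through the nested sums and reindexings; no ingredient beyond the generating-function identity above and the binomial inversion already in Theorem \ref{thm:comb} is required.
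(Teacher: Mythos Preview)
Your argument is correct and complete, but it follows a genuinely different path from the paper's proof.

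The paper invokes the Forg\'acs--Piotrowski closed form \eqref{eq:forgacs} for $Q_k(x)$ as a sum of Hermite polynomials, differentiates and evaluates at zero via Lemma~\ref{lem:zeroherm}, and reads off $Q_{k+2n}^{(k)}(0)$ and $Q_{k+2n+1}^{(k)}(0)$ directly; the $b_{n,m}$'s then drop out of \eqref{eq:aform}. In contrast, you bypass \eqref{eq:forgacs} entirely: you feed the decomposition $T=\sum_k T_k D^k$ and the derivative rule $D^kH_n = 2^k\tfrac{n!}{(n-k)!}H_{n-k}$ into the eigenvalue relation, match monomial coefficients, and solve the resulting triangular system by binomial inversion. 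Your parity observation gives a clean structural reason for the vanishing of the odd $b$'s, and the inversion step produces the pleasant intermediate formula
\[
b_{2n,m}=\frac{1}{n!\,4^n}\sum_{\ell=0}^{n}\binom{n}{\ell}(-1)^\ell \gamma_{m+2\ell}
\]
in terms of the eigenvalues themselves (this is in fact the content of \eqref{eq:hermrecur} in the paper, reached there only later). The final conversion to the $g^*_{\cdot}(-1)$ form via Vandermonde and the generating-function identity $(1-(1+x)^2)^n=(-1)^n x^n(x+2)^n$ is correct; the ranges and reindexings check out. Your route is more self-contained (no dependence on \cite{FP13b}) and makes the parity dichotomy transparent, at the price of a longer combinatorial tail; the paper's route is shorter once \eqref{eq:forgacs} is available and dovetails with the subsequent Theorem~\ref{thm:hermqk}.
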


\begin{proof}
The exists of the sequences $\{Q_k(x)\}_{k=0}^\infty$ and $\{T_k\}_{k=0}^\infty$ are established by Theorem \ref{thm:piotr} and \ref{thm:classic}. We now begin with the remarkable representation formula of T. Forg\'acs and A. Piotrowski that computes the $Q_k$'s in any Hermite diagonal differential operator \cite[Theorem 3.1]{FP13b}, 
\beq\label{eq:forgacs}
Q_k(x)=\sum_{j=0}^{[k/2]} \frac{(-1)^j}{j!(k-2j)!2^{k-j}}g_{k-j}^*(-1)H_{k-2j}(x). 
\eeq
This formula yields the following expressions for all $k,n\in\N_0$, 
\beq\label{eq:calc1}
Q_{k+2n+1}^{(k)}(0)=0, 
\eeq
and
\beq\label{eq:calc2}
Q_{k+2n}^{(k)}(0)=\frac{(-1)^n}{n!2^n}\sum_{j=0}^{n} \binom{n}{j}\frac{g_{k+n+j}^*(-1)}{2^{j}}. 
\eeq

Equations \eqref{eq:calc1} and \eqref{eq:calc2} could have been calculated using the recursive formula of Theorem \ref{thm:piotr}, if one knew, \textit{a priori}, the importance of the $g_{k-j}^*(-1)$'s in formula \eqref{eq:forgacs}. However, this dependence was not made apparent until formula \eqref{eq:forgacs} was uncovered. 

Let us now verify \eqref{eq:calc1} and \eqref{eq:calc2}. Equation \eqref{eq:calc1} is obvious from formula \eqref{eq:forgacs} and the fact that the Hermite polynomials alternate between even and odd polynomials. We now establish \eqref{eq:calc2} using formula \eqref{eq:forgacs} and Lemma \ref{lem:zeroherm} as follows: 
\begin{align}
Q_{k+2n}^{(k)}(0) 
&=\sum_{j=0}^{[(k+2n)/2]} \frac{(-1)^j}{j!(k+2n-2j)!2^{k+2n-j}}g_{k+2n-j}^*(-1)H_{k+2n-2j}^{(k)}(0) \\
&=\sum_{j=0}^{n} \frac{(-1)^j}{j!(k+2(n-j))!2^{k+n+(n-j)}}g_{k+n+(n-j)}^*(-1)H_{k+2(n-j)}^{(k)}(0) \\
&=\sum_{j=0}^{n} \frac{(-1)^{n-j}}{(n-j)!(k+2j)!2^{k+n+j}}g_{k+n+j}^*(-1)H_{k+2j}^{(k)}(0) \\
&=\sum_{j=0}^{n} \frac{(-1)^{n-j}}{(n-j)!(k+2j)!2^{k+n+j}}g_{k+n+j}^*(-1)\left(\frac{(k+2j)!2^k(-1)^j}{j!}\right) \\
&=\frac{(-1)^n}{n!2^n}\sum_{j=0}^{n} \binom{n}{j}\frac{g_{k+n+j}^*(-1)}{2^{j}}. 
\end{align}
We finish the proof by using formula \eqref{eq:aform}. 
\end{proof}

With the aid of what has been shown thus far, we are now in a position to demonstrate our main result, that every Hermite multiplier sequence is the unique sum of classical multiplier sequences. That is, for Hermite multiplier sequences, each $T_n$ in equation \eqref{eq:tn} is hyperbolicity preserving. The spirit of the following argument will be the establishment of a Rodrigues type formula that relates each governing entire function, $\sum_{k=0}^\infty \frac{b_{n,k}}{k!}x^k$, of each $T_n$, with the entire function that defines the hyperbolicity properties of $T$ itself, $\sum_{k=0}^\infty \frac{\gamma_k}{k!}x^k$ (see Theorem \ref{thm:ms} and \ref{thm:msherm}). 

\begin{theorem}\label{thm:herm}
Let $\{\gamma_k\}_{k=0}^\infty$ is a non-trivial Hermite multiplier sequence and let $\{g_k^*(x)\}_{k=0}^\infty$ be the reversed Jensen polynomials associated with $\{\gamma_k\}_{k=0}^\infty$. Then, for each $n\in\N_0$,
\beq\label{eq:crazyherm}
\{b_{n,m}\}_{m=0}^\infty:=\left\{\sum_{k=0}^m \binom{m}{k} \frac{(-1)^n}{n!2^n} \left(\sum_{j=0}^n \binom{n}{j} \frac{g_{k+n+j}^*(-1)}{2^{j}}\right)\right\}_{m=0}^\infty, 
\eeq
is a Hermite multiplier sequence. 
\end{theorem}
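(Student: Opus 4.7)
The plan is to verify the criterion of Theorem \ref{thm:msherm} directly for the governing entire function $B_n(x):=\sum_{m=0}^\infty b_{n,m}x^m/m!$. Applying Theorem \ref{thm:comb} to the outer binomial convolution in \eqref{eq:crazyherm} gives $B_n(x)=e^x\sum_{k=0}^\infty c_{n,k}x^k/k!$, where $c_{n,k}:=\frac{(-1)^n}{n!\,2^n}\sum_{j=0}^n\binom{n}{j}g^*_{k+n+j}(-1)/2^j$. Setting $\phi(x):=\sum_{k=0}^\infty g_k^*(-1)x^k/k!$ and using the index shift $\sum_k g^*_{k+\ell}(-1)x^k/k!=\phi^{(\ell)}(x)$, the inner $j$-sum rearranges as $D^n(I+D/2)^n\phi=2^{-n}D^n(D+2)^n\phi$, producing the operator identity
\begin{equation}\label{eq:plan-key1}
e^{-x}B_n(x)=\frac{(-1)^n}{n!\,4^n}\,D^n(D+2)^n\phi(x).
\end{equation}
This is the Rodrigues-type formula alluded to in the preamble; via the direct computation $(D^2+2D)(e^{-x}g)=e^{-x}(D^2-1)g$ it is equivalent to $B_n(x)=\frac{(-1)^n}{n!\,4^n}(D^2-1)^n f(x)$, a clean relation between the governing functions of $\{b_{n,m}\}$ and of $\{\gamma_k\}$.

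When $\{\gamma_k\}$ is positive or negative, Theorem \ref{thm:msherm} gives $\phi\in\lp^s$, so it will suffice to prove $D^n(D+2)^n\phi\in\lp^s$. Remark \ref{rm:lpinc} gives closure of $\lp^s$ under $D$; that $D+2$ also preserves $\lp^s$ will follow from approximating $\phi$ uniformly on compacta by hyperbolic polynomials with non-negative coefficients, using Hermite-Poulain to keep $(D+2)p$ in $\lp$, and noting that the coefficient map $a_k\mapsto(k+1)a_{k+1}+2a_k$ preserves non-negativity. For the alternating case, Theorem \ref{thm:msherm} gives $G(x):=e^{2x}\phi(x)\in\lp^a$, and the conjugation identities $D(e^{2x}\psi)=e^{2x}(D+2)\psi$ and $D^n(e^{-2x}\psi)=e^{-2x}(D-2)^n\psi$ together transform \eqref{eq:plan-key1} into the companion identity
\begin{equation}\label{eq:plan-key2}
e^xB_n(x)=\frac{(-1)^n}{n!\,4^n}\,D^n(D-2)^n G(x).
\end{equation}
The same strategy then shows that $D-2$ preserves $\lp^a$ (Hermite-Poulain keeps the image in $\lp$, while the coefficient map $a_k\mapsto(k+1)a_{k+1}-2a_k$ flips the starting parity but preserves alternation) and that $D$ preserves $\lp^a$ by Remark \ref{rm:lpinc}, so the right-hand side of \eqref{eq:plan-key2} lies in $\lp^a$ and Theorem \ref{thm:msherm} finishes the alternating case. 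If $B_n\equiv 0$, the resulting sequence is the trivial Hermite multiplier sequence.

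The main obstacle is the closure assertion that $D+2$ preserves $\lp^s$ and $D-2$ preserves $\lp^a$. Hermite-Poulain only guarantees preservation of real zeros, and the finer classes $\lp^s$ and $\lp^a$ are not preserved by arbitrary first-order operators $D-c$; for instance $(D-2)(1+x)^3=1-3x^2-2x^3\notin\lp^s$. What makes the argument go through is that the specific shifts $\pm 2$ in the two closure claims are precisely matched to the exponential weights $e^{\pm x}$ appearing in the Hermite multiplier sequence characterisation of Theorem \ref{thm:msherm}, so that the relevant coefficient recurrences preserve, respectively, one-signedness and alternation. Once these two closure properties are rigorously established by the polynomial-approximation argument above, the remainder of the proof is routine operator bookkeeping.
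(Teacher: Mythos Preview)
Your proof is correct and follows essentially the same route as the paper: the paper derives the identical Rodrigues-type identity, written in the conjugated form $h_n(x)=\frac{(-1)^n}{n!4^n}D^n e^{-2x}D^n e^{2x}\phi(x)$ (which equals your $D^n(D+2)^n\phi$ since $e^{-2x}D^ne^{2x}=(D+2)^n$), and then concludes both the $\lp^s$ and $\lp^a$ cases by invoking Remark~\ref{rm:lpinc}. That remark, restated via $f=e^{2x}\phi$, is exactly the assertion that $D+2$ preserves $\lp^s$ (and dually $D-2$ preserves $\lp^a$), so your Hermite--Poulain plus coefficient-sign argument is simply a direct verification of the special case of Remark~\ref{rm:lpinc} that the paper cites.
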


\begin{proof}
By assumption, $\{\gamma_k\}_{k=0}^\infty$ is a Hermite multiplier sequence. Hence, by Theorem \ref{thm:msherm}, if 
\beq
f(x):=\sum_{k=0}^\infty \frac{f^{(k)}(0)}{k!} x^k := \sum_{k=0}^\infty \frac{g_k^*(-1)}{k!} x^k, 
\eeq
then, either $f(x)\in\lp^s$ or $e^{2x}f(x)\in\lp^a$. We wish to show that, $\{b_{n,m}\}_{m=0}^\infty$, is a Hermite multiplier sequence; thus using Theorem \ref{thm:msherm} we must show that if
\beq\label{h_n's}
h_n(x):=\sum_{m=0}^\infty \left(\sum_{k=0}^m \binom{m}{k} b_{n,k} (-1)^{m-k} \right) \frac{x^m}{m!}, 
\eeq
then either $h_n(x)\in\lp^s$ or $e^{2x}h_n(x)\in\lp^a$. We use Theorem \ref{thm:comb} and perform the following calculation, 
\begin{align}
h_n(x)
&=\sum_{m=0}^\infty \left(\sum_{k=0}^m \binom{m}{k} b_{n,k} (-1)^{m-k} \right) \frac{x^m}{m!} \\
&=\sum_{k=0}^\infty \left(\frac{(-1)^n}{n!2^n} \left(\sum_{j=0}^n \binom{n}{j} \frac{g_{k+n+j}^*(-1)}{2^{j}}\right) \right)\frac{x^k}{k!} \\
&=\frac{(-1)^n}{n!2^n} \sum_{j=0}^n \binom{n}{j}\frac{1}{2^{j}} \sum_{k=0}^\infty \left(\frac{g_{k+n+j}^*(-1)}{k!}\right) x^k \\
&=\frac{(-1)^n}{n!2^n} \sum_{j=0}^n \binom{n}{j} \frac{1}{2^j} D^{n+j} f(x) \\
&=\frac{(-1)^n}{n!4^n} D^n \left(\sum_{j=0}^n \binom{n}{j} D^{j} 2^{n-j}\right) f(x) \\
&=\frac{(-1)^n}{n!4^n} D^n (2+D)^n f(x) \\
&=\frac{(-1)^n}{n!4^n} D^n e^{-2x} D^n e^{2x} f(x). \label{eq:hermrecur}
\end{align}
Hence, if $f(x)\in\lp^s$, then $h_n(x)\in\lp^s$ and if $e^{2x}f(x)\in\lp^a$, then $e^{2x}h_n(x)\in\lp^a$ (see also Remark \ref{rm:lpinc}). 
\end{proof}

Equation \eqref{eq:hermrecur} yields a little more information than Theorem \ref{thm:herm}, in particular we derive the recursive formula, 
\beq
h_n(x)=\frac{-1}{4n}De^{-2x}De^{2x} h_{n-1}(x),\ \ \ \ \ (n\ge 1,\ h_0(x):=f(x)).  
\eeq
Hence, only $T_n$ needs to be diagonalizable with a Hermite multiplier sequence to establish that $T_{n+1}$ is also diagonalizable with a Hermite multiplier sequence. 

Given a Hermite diagonal differential operator, $T[H_n(x)]=\gamma_n H_n(x)$, $\gamma_n\in\R$, (see Definition \ref{def:diagonalize}), then $T_0$ (see \ref{eq:tn}) diagonalizes with the same eigenvalue sequence, namely $T_0[x^n]=\gamma_n x^n$. In fact, this is more generally known (see \cite{Bat14a}). This indicates that if one assumes each operator $T_n$ yields a Hermite multiplier sequence, then Theorem \ref{thm:herm} has a trivial converse, in the sense that if one assumes each $T_n$ diagonalizes with a Hermite multiplier sequence then $T$ itself will also be hyperbolicity preserving. However, what if one only assumes that each $T_n$ is hyperbolicity preserving? Must $T$ be hyperbolicity preserving? We answer this question in the negative, with the following examples. 

\begin{example}\label{ex:notherm1}
Consider the following Hermite diagonal operator that is not hyperbolicity preserving (see Theorem \ref{thm:msherm}), 
\beq
T[H_n(x)]:=\left((-1)^{n+1}(n-1)\right)H_n(x).
\eeq
Thus we calculate, 
\beq
w(x):=\sum_{k=0}^\infty \frac{(-1)^{k+1}(k-1)}{k!}x^k = (x+1)e^{-x}. 
\eeq
Hence, using equation \eqref{eq:hermrecur} (note, $f(x)=e^{-x}w(x)$ (see Theorem \ref{thm:msherm})), we can calculate the $h_n$'s, 
\begin{align}
h_0(x)& := \sum_{k=0}^\infty \frac{Q_k^{(k)}(0)}{k!}x^k = (x+1)e^{-2x},\\ 
h_1(x)& := \sum_{k=0}^\infty \frac{Q_{k+2}^{(k)}(0)}{k!}x^k = \frac{1}{2}e^{-2x},\ \ \ \ \ \text{and}\\ 
h_n(x)& := \sum_{k=0}^\infty \frac{Q_{k+2n}^{(k)}(0)}{k!}x^k = 0,\ \ \ \ \ \text{for}\ n\ge 2. 
\end{align}
Hence, 
\begin{align}
T&=1-xD+\sum_{k=0}^\infty \Bigg(\frac{\overset{\underbrace{h_0^{(k+2)}(0)}}{\mathcol{k(-2)^{k+1}}}}{(k+2)!}x^{k+2}+\frac{\overset{\underbrace{h_1^{(k)}(0)}}{\mathcol{-(-2)^{k-1}}}}{k!}x^k\Bigg) D^{k+2} \\
&=T_0+T_2D^2.
\end{align}
Thus, 
\begin{align}
T_0[x^n]
&=\Bigg(1-xD+\sum_{k=0}^\infty\frac{k(-2)^{k+1}}{(k+2)!}x^{k+2}D^{k+2}\Bigg) x^n = \left((-1)^{n+1}(n-1)\right) x^n, \\
T_2[x^n]&=\left(\sum_{k=0}^\infty \left(\frac{-(-2)^{k-1}}{k!}x^k\right)D^{k}\right) x^n = \left(\frac{1}{2}(-1)^n\right) x^n,\ \ \ \ \ \text{and} \\
T_{2m}[x^n] &= \left(0\right)x^n=(0)x^n,\ \ \ \ \ \text{for}\ m\ge 2. 
\end{align}
We see that for every $n\ge 1$, $h_n(x)\in\lp^a$, hence $T_{2n}$ is hyperbolicity preserving (see Theorem \ref{thm:ms}). However, the original operator $T$ itself is not hyperbolicity preserving, as the following calculation shows,
\begin{align}
T[4x^2+2x-5]=&\ T[\overset{\underbrace{-3H_0(x)}}{(-3)}+\overset{\underbrace{H_1(x)}}{(2x)}+\overset{\underbrace{H_2(x)}}{(4x^2-2)}]\\
=&\ 1(-3)+0(2x)+(-1)(4x^2-2)=-4x^2-1.  
\end{align}
\end{example}

\begin{example}\label{ex:notherm2}
Consider another Hermite diagonal operator that does not preserve hyperbolicity (see Theorem \ref{thm:msherm}), $\{\gamma_k\}_{k=0}^\infty=\{(1/2)^k\}_{k=0}^\infty$; that is,  
\beq
T[H_n(x)]=\gamma_nH_n(x):=(1/2)^n H_n(x). 
\eeq
Using Theorem \ref{thm:classic} we write $T=\sum_{n=0}^\infty T_nD^n$, where $T_n[x^m]=b_{n,m}x^m$. We rewrite formula \eqref{eq:hermrecur} in terms of $b_{n,m}$'s and $\gamma_n$'s (see Theorem \ref{thm:comb}), 
\beq
\sum_{k=0}^\infty \frac{b_{n,k}}{k!}x^k = \frac{(-1)^n}{n!4^n} e^x D^n e^{-2x} D^n e^x \sum_{k=0}^\infty \frac{\gamma_k}{k!}x^k. 
\eeq
Since $\sum_{k=0}^\infty \frac{\gamma_k}{k!}x^k = e^{x/2}$, then 
\beq
\sum_{k=0}^\infty \frac{b_{n,k}}{k!}x^k = \frac{(-1)^n}{n!4^n}\left(-\frac{1}{2}\right)^n\left(\frac{3}{2}\right)^n e^{x/2}. 
\eeq
Thus $\sum_{k=0}^\infty \frac{b_{n,k}}{k!}x^k\in\lp^s$ for every $n\in\N_0$. Hence, $T_{2n}$ is hyperbolicity preserving for every $n\in\N_0$ (see Theorem \ref{thm:ms}), however, as noted above, $T$ is not hyperbolicity preserving (see Theorem \ref{thm:msherm}). 
\end{example}

\begin{example}
To demonstrate the usefulness of Theorem \ref{thm:herm}, consider the following example. How would one show that 
\beq
\{a_m\}_{m=0}^\infty:=\{m^{5/2}\}_{m=0}^\infty
\eeq
is not a multiplier sequence? Sequence $\{a_m\}_{m=0}^\infty$ satisfies the Tur\'an inequalities and is a positive, increasing sequence. Thus some well known methods do not work (see for example see \cite[p. 341]{Lev80}, concerning the Tur\'an inequalities). One could apply the sequence to $(1+x)^5$ to calculate to the fifth associated Jensen polynomial, 
\beq
= (5)x+(56.56\ldots)x^2+(155.88\ldots)x^3+(160)x^4+(55.90\ldots)x^5  
\eeq
and verify that this polynomial has non-real zeros, however this can prove to be quite tedious. Instead, we apply Theorem \ref{thm:herm} and calculate as summarized in Figure \ref{eq:bnthing}. 
\begin{figure}[h!]
\beq\nonumber
\barr{cclll}
&&\\
b_{0,n}&=&0,&1,&\cdots\\&&\\
b_{1,n}&=&-1.41,&-3.65,&\cdots\\&&\\
b_{2,n}&=&0.646,&0.804,&\cdots\\&&\\
b_{3,n}&=&\mathcol{-0.0238},&\mathcol{-0.020},&\cdots\\&&\\
\vdots&&\ \ \vdots&\ \ \vdots&\ddots
\earr
\eeq
\caption{Table of Hermite diagonal differential operator eigenvalues. \label{eq:bnthing}}
\end{figure}
Hence, after a few simple \textit{numerical} calculations we arrive at the highlighted portions in Figure \ref{eq:bnthing} and note that they are negative and increasing, so $\{b_{3,n}\}_{n=0}^\infty$ is not a Hermite multiplier sequence (see Theorem \ref{thm:msherm}). Thus, the original sequence, $\{a_m\}_{m=0}^\infty$, is not a Hermite multiplier sequence. Consequently, since $\{a_m\}_{m=0}^\infty$ is an increasing sequence that is not a Hermite multiplier sequence, by Theorem \ref{thm:msherm}, we conclude that $\{a_m\}_{m=0}^\infty$ cannot be a classical multiplier sequence. 
\end{example}

Our next task is to present several relationships between the polynomial coefficients, the $Q_k$'s, and the eigenvalues, the $\gamma_k$'s, in a Hermite diagonal differential operator, 
\beq
T[H_n(x)]:=\left(\sum_{k=0}^\infty Q_k(x) D^k\right)H_n(x)=\gamma_n H_n(x). 
\eeq
In general, in a diagonal differential operator, the relationship between the $Q_k$'s and the $\gamma_k$'s is not well understood, particularly in the context of hyperbolicity preservation. In special cases direct formulas have been found (see for example \eqref{eq:forgacs}) (cf. Theorem \ref{thm:piotr} and \cite[Proposition 216, p. 107]{Cha11}), but a general relation has not been derived that indicates the properties of the $Q_k$'s and the $\gamma_k$'s for arbitrary hyperbolicity preserving operators. Thus, whenever possible, it is beneficial to present formulas that highlight the nature of the $Q_k$'s in terms of the eigenvalues, the $\gamma_k$'s. Using calculation \eqref{eq:calc1} and \eqref{eq:calc2}, in Theorem \ref{thm:hermqk} we can provide another formula for the $Q_k$'s in a Hermite diagonal differential operator. 

\begin{theorem}\label{thm:hermqk}
Let $\{\gamma_n\}_{n=0}^\infty$ be a sequence of real numbers and $\{Q_k(x)\}_{k=0}^\infty$ be a sequence of real polynomials, such that 
\beq
T[H_n(x)]:=\left(\sum_{k=0}^\infty Q_k(x)D^k\right)H_n(x) = \gamma_n H_n(x),\ \ \ n\in\N_0. 
\eeq
Then for each $m\in\N_0$, 
\beq
Q_m(x)=\sum_{k=0}^{[m/2]} \frac{(-1)^k}{k!2^k} \left(\sum_{j=0}^k \binom{k}{j} \frac{g_{m-k+j}^*(-1)}{2^{j}}\right)\frac{x^{m-2k}}{(m-2k)!}, 
\eeq
where $\{g_k^*(x)\}_{k=0}^\infty$ are the associated reversed Jensen polynomials of $\{\gamma_n\}_{n=0}^\infty$. 
\end{theorem}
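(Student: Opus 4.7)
The plan is to recognize that this formula is essentially a repackaging of the derivative computations \eqref{eq:calc1} and \eqref{eq:calc2} that already appeared inside the proof of Theorem \ref{thm:hermiteissums}. So nothing new really needs to be computed from scratch; I just need to expand $Q_m(x)$ as a Taylor polynomial about the origin and plug in the values that Theorem \ref{thm:hermiteissums} supplies.

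First I would note that, by \eqref{eq:forgacs}, each $Q_m(x)$ is a linear combination of the Hermite polynomials $H_{m-2j}(x)$, and hence is a polynomial of parity matching that of $m$; equivalently, $Q_m^{(i)}(0)=0$ whenever $m-i$ is odd. Moreover, since $\deg Q_m(x) \leq m$, the Taylor expansion of $Q_m(x)$ about $x=0$ terminates, and only the indices of the form $i = m-2k$ with $0 \leq k \leq [m/2]$ contribute. Thus
\beq
Q_m(x) \;=\; \sum_{k=0}^{[m/2]} \frac{Q_m^{(m-2k)}(0)}{(m-2k)!}\, x^{m-2k}.
\eeq

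Next, I would invoke \eqref{eq:calc2}, which was established in the proof of Theorem \ref{thm:hermiteissums} by combining \eqref{eq:forgacs} with Lemma \ref{lem:zeroherm}. Applying that identity with its ``$k$'' replaced by $m-2k$ and its ``$n$'' replaced by $k$ gives
\beq
Q_m^{(m-2k)}(0) \;=\; Q_{(m-2k)+2k}^{(m-2k)}(0) \;=\; \frac{(-1)^k}{k!\,2^k} \sum_{j=0}^k \binom{k}{j} \frac{g_{m-k+j}^*(-1)}{2^j}.
\eeq
(The odd case \eqref{eq:calc1} is what justifies dropping all the terms $i=m-(2k+1)$ from the Taylor expansion, which is the parity observation above.)

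Substituting this expression for $Q_m^{(m-2k)}(0)$ back into the Taylor expansion then yields precisely the claimed formula
\beq
Q_m(x) \;=\; \sum_{k=0}^{[m/2]} \frac{(-1)^k}{k!\,2^k} \left( \sum_{j=0}^k \binom{k}{j}\frac{g^*_{m-k+j}(-1)}{2^j}\right) \frac{x^{m-2k}}{(m-2k)!}.
\eeq
Since all of the heavy lifting, namely the identification of $Q_{k+2n}^{(k)}(0)$, was already carried out in the proof of Theorem \ref{thm:hermiteissums}, there is no real obstacle here; the only thing to be careful about is the reindexing $(k,n) \leftrightarrow (m-2k, k)$ and keeping track of which index refers to the order of differentiation versus the order of the operator in the original sum $\sum Q_k(x)D^k$.
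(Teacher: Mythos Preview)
Your proposal is correct and follows exactly the approach the paper intends: the paper explicitly states that Theorem \ref{thm:hermqk} is obtained ``using calculation \eqref{eq:calc1} and \eqref{eq:calc2}'' and provides no further proof, so your Taylor expansion of $Q_m(x)$ about the origin together with the reindexing $(k,n)\mapsto(m-2k,k)$ in \eqref{eq:calc2} is precisely what is required.
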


We also derive a complex formulation for the $Q_k$'s in a Hermite diagonal differential operator (Theorem \ref{thm:hermcomplex}). A heuristic argument of the proof of Theorem \ref{thm:hermcomplex} follows easily by considering the generating function of the Hermite polynomials (see \cite[p. 187]{Rai60}), 
\beq\label{eq:hermgen}
e^{2xt-t^2}=\sum_{n=0}^\infty \frac{H_n(x)}{n!} t^n. 
\eeq
We now calculate $T[e^{2xt-t^2}]$ in two ways, 
\begin{align} 
T[e^{2xt-t^2}]&=\left(\sum_{k=0}^\infty Q_k(x)D^k\right) e^{2xt-t^2}=e^{2xt-t^2}\sum_{k=0}^\infty Q_k(x)(2t)^k,\ \ \ \ \ \text{and} \\ 
T[e^{2xt-t^2}]&=T\left[\sum_{n=0}^\infty \frac{H_n(x)}{n!}t^n\right]=\sum_{n=0}^\infty \frac{\gamma_n H_n(x)}{n!}t^n. 
\end{align}
Hence, 
\beq\label{eq19}
\sum_{k=0}^\infty Q_k(x) (2t)^k = e^{-2xt+t^2} \left(\sum_{n=0}^\infty \frac{\gamma_n H_n(x)}{n!}t^n \right). 
\eeq
Thus, performing a Cauchy product on the right hand side of \eqref{eq19} and comparing the coefficients of $t^n$ on the right and left of \eqref{eq19}, for each $n\in\N_0$, we have, 
\begin{align}
Q_n(x)2^n 
&= \frac{1}{n!} \sum_{k=0}^n \binom{n}{k} \left.\left(\frac{d^{n-k}}{dt^{n-k}}e^{-2xt+t^2}\right)\right|_{t=0} \left.\left(\frac{d^{k}}{dt^{k}} \sum_{j=0}^\infty \frac{\gamma_j H_j(x)}{j!}t^j\right)\right|_{t=0} \\ 
&= \frac{1}{n!} \sum_{k=0}^n \binom{n}{k} \left.\frac{d^{n-k}}{dt^{n-k}} \sum_{j=0}^\infty \frac{H_j(ix)}{j!}(it)^j \right|_{t=0} \left.\frac{d^{k}}{dt^{k}} \sum_{j=0}^\infty \frac{\gamma_j H_j(x)}{j!}t^j\right|_{t=0} \\ 
&= \frac{1}{n!} \sum_{k=0}^n \binom{n}{k} i^{n-k}H_{n-k}(ix) \gamma_k H_k(x). 
\end{align}

\begin{remark}\label{rmk:careful}
We must be cautious with the argument above since $T[e^{2xt-t^2}]$ need not converge and hence is only calculated formally. However, even under formal assumptions there is no reason to assume that a differential representation of a linear operator will calculate the same formal series as the operator itself. That is, the calculation, 
\beq\label{eq:thingy}
T[e^{2xt-t^2}]=e^{2xt-t^2}\sum_{k=0}^\infty Q_k(x)(2t)^k, 
\eeq
has not been rigorously established.  
\end{remark}

\begin{theorem}\label{thm:hermcomplex}
Let $\{\gamma_n\}_{n=0}^\infty$ be a sequence of real numbers and $\{Q_k(x)\}_{k=0}^\infty$ be a sequence of real polynomials, such that 
\beq
T[H_n(x)]:=\left(\sum_{k=0}^\infty Q_k(x)D^k\right)H_n(x) = \gamma_n H_n(x),\ \ \ n\in\N_0. 
\eeq
Then for each $n\in\N_0$, 
\beq\label{eq:newherm}
Q_n(x)=\frac{1}{n!2^n}\sum_{k=0}^{n} \binom{n}{k} \gamma_k i^{n-k}H_{n-k}(ix)H_k(x).  
\eeq
\end{theorem}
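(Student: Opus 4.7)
The plan is to make rigorous the heuristic generating-function argument preceding Remark \ref{rmk:careful} by working inside the formal power series ring $\R[x][[t]]$ (with complex extension $\C[x][[t]]$) rather than with convergent series. This sidesteps the issue flagged in the remark: no notion of $T$ acting on an entire function is required, only a formal identity between two power series in $t$ whose coefficients are polynomials in $x$.

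First I would record the two Hermite generating functions needed. The identity $e^{2xt-t^2}=\sum_{n\ge 0} H_n(x)\,t^n/n!$ holds in $\R[x][[t]]$ by definition, and its complex twist $e^{-2xt+t^2}=\sum_{n\ge 0} i^n H_n(ix)\,t^n/n!$ is obtained by the continuous substitutions $x\mapsto ix$, $t\mapsto it$ (valid since both respect the $(t)$-adic topology). Because $2xt-t^2$ and $-2xt+t^2$ are additive inverses without constant term in $t$, these two series are formal multiplicative inverses. The standard Hermite identity $H_n'(x)=2nH_{n-1}(x)$ then lifts termwise to $D^k[e^{2xt-t^2}]=(2t)^k e^{2xt-t^2}$ in $\R[x][[t]]$ for every $k\in\N_0$.

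Next I would apply $T$ to $e^{2xt-t^2}$ in two different ways and equate the results in $\R[x][[t]]$. On the one hand, termwise linearity together with the hypothesis yields $T[e^{2xt-t^2}]=\sum_{n\ge 0}\gamma_n H_n(x)\,t^n/n!$. On the other hand, applying each summand $Q_k(x)D^k$ of $T$ to the expanded series and using $D^k e^{2xt-t^2}=(2t)^k e^{2xt-t^2}$ produces the formal product $\bigl(\sum_{k\ge 0} Q_k(x)(2t)^k\bigr)e^{2xt-t^2}$. The delicate step is verifying that these two formal series agree; extracting the $t^n$ coefficient from each reduces the claim to the well-known Hermite identity $H_n^{(k)}(x)=2^k\,n!/(n-k)!\,H_{n-k}(x)$. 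Multiplying the resulting identity by the inverse series then gives
\[
\sum_{k\ge 0} Q_k(x)(2t)^k \;=\; \biggl(\sum_{m\ge 0} i^m H_m(ix)\frac{t^m}{m!}\biggr)\biggl(\sum_{j\ge 0}\gamma_j H_j(x)\frac{t^j}{j!}\biggr).
\]

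The final step is routine: extract the $t^n$ coefficient from both sides. The left side contributes $Q_n(x)2^n$, while the right side, via Cauchy product, contributes $\frac{1}{n!}\sum_{k=0}^n\binom{n}{k}\gamma_k\,i^{n-k}H_{n-k}(ix)H_k(x)$; solving for $Q_n(x)$ delivers formula \eqref{eq:newherm}. The apparent non-reality of the right-hand side is resolved by the parity observation that $H_m(ix)$ contains only monomials $x^\ell$ with $m-\ell$ even, so $i^m H_m(ix)\in\R[x]$, and the full expression is real (alternatively, reality is forced by the uniqueness of the differential representation in Theorem \ref{thm:piotr} applied to the real operator $T$). The main obstacle throughout is the reconciliation step inside $\R[x][[t]]$; once that Hermite-identity calculation is dispatched, the remainder of the argument is bookkeeping with Cauchy products.
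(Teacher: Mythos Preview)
Your argument is correct and takes a genuinely different route from the paper. The paper proves the formula by direct verification: it defines $\tilde{T}$ with the claimed $Q_n$'s from \eqref{eq:newherm} and checks $\tilde{T}[H_n(x)]=\gamma_n H_n(x)$ by a combinatorial expansion, using $D^k H_n = 2^k\frac{n!}{(n-k)!}H_{n-k}$, the identity $\binom{n}{k}\binom{k}{j}=\binom{n}{j}\binom{n-j}{k-j}$, and the Leibniz rule applied to the product $e^{-2xt+t^2}e^{2xt-t^2}$ to collapse the inner sum. By contrast, you rehabilitate the heuristic generating-function derivation that the paper sketched and then set aside in Remark~\ref{rmk:careful}: working in $\R[x][[t]]$, you justify the two evaluations of $T[e^{2xt-t^2}]$ coefficient-by-coefficient (where only finitely many $k$ contribute to each $t^n$), so the formal equality \eqref{eq:thingy} is rigorous without any convergence hypothesis, and the formula drops out of a Cauchy product. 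Your route is more conceptual and explains the origin of \eqref{eq:newherm}; the paper's route avoids the formal-series apparatus at the cost of a heavier binomial computation. Both rely on the same Hermite derivative identity at the core, and your reality remark about $i^m H_m(ix)$ is the natural companion observation.
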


\begin{proof}
Define 
\beq
\tilde{T}:=\sum_{k=0}^\infty Q_k(x)D^k, 
\eeq
where we define $Q_k(x)$ from equation \eqref{eq:newherm}. In the spirit of T. Forg\'acs and A. Piotrowski \cite[Theorem 3.1]{FP13b}, we need only to show that $\tilde{T}[H_n(x)]=\gamma_n H_n(x)$ for each $n\in\N_0$. We note that for $n,m\in\N_0$, $D^m H_n(x) = 2^m \binom{m}{n} n! H_{n-m}(x)$ \cite[p. 188]{Rai60}. We also note that $\binom{n}{k}\binom{k}{j} = \binom{n}{j}\binom{n-j}{k-j}$ (see \cite[p. 3]{Rio79}). Using the generating function of the Hermite polynomials, equation \eqref{eq:hermgen}, we now calculate 
\begin{align}
\tilde{T}[H_n(x)] 
&=\sum_{k=0}^n\left(\frac{1}{k!2^k}\sum_{j=0}^{k} \binom{k}{j} \gamma_j i^{k-j}H_{k-j}(ix)H_j(x)\right)\mathcol{D^k H_n(x)} \\
&=\sum_{k=0}^n\left(\frac{1}{k!2^k}\sum_{j=0}^{k} \binom{k}{j} \gamma_j i^{k-j}H_{k-j}(ix)H_j(x)\right)\mathcol{2^k\binom{n}{k}k! H_{n-k}(x)} \\
&=\sum_{j=0}^n \gamma_j H_j(x) \sum_{k=j}^{n} \binom{n}{k} \binom{k}{j} i^{k-j}H_{k-j}(ix)H_{n-k}(x) \\
&=\sum_{j=0}^n \gamma_j H_j(x) \sum_{k=0}^{n-j} \mathcol{\binom{n}{k+j} \binom{k+j}{j}} i^{k}H_{k}(ix)H_{(n-j)-k}(x) \\
&=\sum_{j=0}^n \gamma_j H_j(x) \sum_{k=0}^{n-j} \mathcol{\binom{n}{j} \binom{n-j}{k}} i^{k}H_{k}(ix)H_{(n-j)-k}(x) \\
&=\sum_{j=0}^n \binom{n}{j} \gamma_j H_j(x) \cdot\sum_{k=0}^{n-j} \binom{n-j}{k} \left(\left.\frac{d^{k}}{dt^k}e^{-2xt+t^2}\right|_{t=0}\right)\cdot \left(\left.\frac{d^{(n-j)-k}}{dt^{(n-j)-k}}e^{2xt-t^2}\right|_{t=0}\right) \\
&=\sum_{j=0}^n \binom{n}{j} \gamma_j H_j(x) \left.\frac{d^{n-j}}{dt^{n-j}}e^{-2xt+t^2}e^{2xt-t^2}\right|_{t=0} \\
&=\gamma_n H_n(x). \nonumber\tag*{\qedhere}
\end{align}
\end{proof}

We can also establish an interesting relationship between alternating Hermite diagonal differential operators and non-alternating Hermite diagonal differential operators. This will allow us to provide an alternate proof and a non-obvious extension of T. Forg\'acs and A. Piotrowski \cite[Theorem 3.7]{FP13b} (cf.~Example~\ref{ex:hermfinin}). 

\begin{theorem}\label{thm:herminter}
Let $\{\gamma_k\}_{k=0}^\infty$ be a sequence of real numbers. Define the Hermite diagonal differential operators, 
\beq
T[H_n(x)]:=\left(\sum_{k=0}^\infty Q_k(x)D^k\right)H_n(x) = \gamma_n H_n(x)
\eeq
and
\beq
\tilde{T}[H_n(x)]:=\left(\sum_{k=0}^\infty \tilde{Q}_k(x)D^k\right)H_n(x) = (-1)^n\gamma_n H_n(x). 
\eeq
Then for each $n\in\N_0$,  
\beq\label{eq:thingy23}
Q_n(x)=\frac{(-2)^n}{n!}\left(\sum_{k=0}^\infty \frac{\tilde{Q}_k(x)}{2^k}D^k\right) x^n
\eeq
and
\beq
\tilde{Q}_n(x)=\frac{(-2)^n}{n!}\left(\sum_{k=0}^\infty \frac{Q_k(x)}{2^k}D^k\right) x^n. 
\eeq
\end{theorem}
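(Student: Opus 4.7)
The plan is to prove the theorem via generating functions, using Theorem \ref{thm:hermcomplex} as the starting point. First I would multiply the identity in Theorem \ref{thm:hermcomplex} by $(2t)^n$, sum over $n\in\N_0$, and recognize the resulting double sum as a Cauchy product. Using the Hermite generating function $\sum_{m=0}^\infty H_m(y)s^m/m! = e^{2ys-s^2}$ applied at $y=ix$, $s=it$, one has $\sum_{m=0}^\infty i^m H_m(ix)t^m/m! = e^{-2xt+t^2}$, yielding the formal power series identity
\beq
\sum_{n=0}^\infty Q_n(x)(2t)^n = e^{-2xt+t^2}\sum_{k=0}^\infty \frac{\gamma_k H_k(x)}{k!}t^k.
\eeq
Replacing $\gamma_n$ by $(-1)^n\gamma_n$ produces the analogous identity for the $\tilde{Q}_n$'s, and substituting $t\mapsto -t$ in that identity gives
\beq
\sum_{n=0}^\infty \tilde{Q}_n(x)(-2t)^n = e^{2xt+t^2}\sum_{k=0}^\infty \frac{\gamma_k H_k(x)}{k!}t^k.
\eeq

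Next I would divide the first identity by the second to cancel the common right-hand factor, obtaining
\beq
\sum_{n=0}^\infty Q_n(x)(2t)^n = e^{-4xt}\sum_{n=0}^\infty \tilde{Q}_n(x)(-2t)^n.
\eeq
Expanding $e^{-4xt}$ as a power series and extracting the coefficient of $t^n$ from both sides yields
\beq
Q_n(x)\,2^n = \sum_{k=0}^n \frac{(-4x)^{n-k}}{(n-k)!}(-2)^k \tilde{Q}_k(x).
\eeq
A routine simplification (collecting powers of $2$ and $-1$) reduces this to $Q_n(x)=(-1)^n\sum_{k=0}^n 2^{n-k}x^{n-k}\tilde{Q}_k(x)/(n-k)!$. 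Since $D^k x^n = n!\,x^{n-k}/(n-k)!$ for $k\le n$ and vanishes otherwise, this is exactly the right-hand side of equation \eqref{eq:thingy23}, establishing the first formula.

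The second formula follows by the symmetry $\gamma_n \leftrightarrow (-1)^n\gamma_n$, which swaps the roles of $T$ and $\tilde{T}$ (because applying the alternation twice recovers $\gamma_n$); applying the first formula with $(\tilde{T},T)$ in place of $(T,\tilde{T})$ immediately gives the desired expression for $\tilde{Q}_n$. I do not anticipate any serious obstacle: the only subtle point is that the generating function identity must be viewed as a genuine formal power series equality, not as an instance of the heuristic expression in Remark \ref{rmk:careful}. This is handled by the coefficient-by-coefficient Cauchy product argument sketched above, which derives the identity directly from Theorem \ref{thm:hermcomplex} and thus bypasses the problematic equation \eqref{eq:thingy}.
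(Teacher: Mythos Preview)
Your proposal is correct and follows essentially the same route as the paper's proof: both establish the formal generating-function identity $\sum_n Q_n(x)(2t)^n = e^{-2xt+t^2}\sum_k \gamma_k H_k(x)t^k/k!$ (and its tilde analogue) via Theorem~\ref{thm:hermcomplex}, then use the relation $e^{-2xt+t^2}=e^{-4xt}\cdot e^{2xt+t^2}$ to link the two generating series, extract coefficients, and invoke symmetry. The only cosmetic differences are that the paper computes $\tilde{Q}_n$ first while you compute $Q_n$ first, and your phrasing ``divide the first identity by the second'' should really be read as ``multiply by $e^{-4xt}$'' (no inversion of the possibly non-unit series $\sum_k \gamma_k H_k(x)t^k/k!$ is actually needed).
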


\begin{proof}
In light of Remark \ref{rmk:careful} and Theorem \ref{thm:hermcomplex}, we may conclude that, 
\beq\label{eq:thing8}
\sum_{k=0}^\infty Q_k(x)(2t)^k = e^{-2xt+t^2}\left(\sum_{n=0}^\infty \frac{\gamma_n H_n(x)}{n!} t^n\right)
\eeq
and
\beq
\sum_{k=0}^\infty \tilde{Q}_k(x)(2t)^k = e^{-2xt+t^2}\left(\sum_{n=0}^\infty \frac{(-1)^n\gamma_n H_n(x)}{n!} t^n\right);
\eeq
i.e., as formal power series in $t$, the coefficients are equal (see \cite{Niv69} or \cite[p. 130]{Rot02}). Hence, after substitution of $t\to -t$, we have
\begin{align}
e^{-4xt}\sum_{k=0}^\infty Q_k(x)(-2t)^k 
&= e^{-4xt}\left(e^{2xt+t^2} \left(\sum_{n=0}^\infty \frac{(-1)^n\gamma_n H_n(x)}{n!} t^n\right) \right)\\
&= e^{-2xt+t^2} \sum_{n=0}^\infty \frac{(-1)^n\gamma_n H_n(x)}{n!} t^n \\
&= \sum_{k=0}^\infty \tilde{Q}_k(x)(2t)^k. 
\end{align}
Thus, 
\begin{align}
\tilde{Q}_n(x) 
&= \frac{1}{n!2^n}\frac{d^n}{dt^n} \left. e^{-4xt}\sum_{k=0}^\infty Q_k(x)(-2t)^k \right|_{t=0} \\
&= \frac{1}{n!2^n}\sum_{k=0}^n \binom{n}{k} (-4x)^{n-k} (-2)^k k! Q_k(x) \\
&= \frac{(-2)^n}{n!}\sum_{k=0}^n  \binom{n}{k} \frac{x^{n-k}}{2^k} k! Q_k(x) \\
&= \frac{(-2)^n}{n!}\left(\sum_{k=0}^n  \frac{Q_k(x)}{2^k}D^k \right)x^n. 
\end{align}
By symmetry, equation \eqref{eq:thingy23} also holds. 
\end{proof}

\begin{theorem}[{\hspace{1sp}\cite[Theorem 3.7]{FP13b}}]\label{thm:hermrealzeros}
Let $\{\gamma_k\}_{k=0}^\infty$ be a non-trivial Hermite multiplier sequence,  
\beq
T[H_n(x)]:=\left(\sum_{k=0}^\infty Q_k(x)D^k\right)H_n(x) = \gamma_n H_n(x). 
\eeq 
Then each $Q_k(x)$ has only real zeros. 
\end{theorem}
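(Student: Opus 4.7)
The approach is to combine the Forg\'acs-Piotrowski representation \eqref{eq:forgacs} with the Hermite-Poulain theorem via the identity
\[
H_{m}(x)=e^{-D^{2}/4}(2x)^{m},
\]
which follows from Definition \ref{def:poly} by a direct termwise expansion of the operator $e^{-D^{2}/4}$ applied to monomials. Substituting into \eqref{eq:forgacs} gives the factorization
\[
Q_{k}(x)=e^{-D^{2}/4}\,P_{k}(2x),\qquad P_{k}(y):=\sum_{j=0}^{[k/2]}\frac{(-1)^{j}\,g_{k-j}^{*}(-1)}{j!(k-2j)!\,2^{k-j}}\,y^{k-2j}.
\]
The entire function $e^{-z^{2}/4}$ lies in $\lp^{a}$: it is the locally uniform limit of the hyperbolic polynomials $(1-z^{2}/(4N))^{N}$, and its nonzero Taylor coefficients alternate in sign. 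The Hermite-Poulain theorem therefore makes $e^{-D^{2}/4}$ a hyperbolicity preserving operator, so it will suffice to show that each $P_{k}(y)$ has only real zeros in $y$.

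A direct Cauchy product computation yields the key generating function identity
\[
\sum_{k=0}^{\infty}P_{k}(y)(2t)^{k}=h\!\left(t(y-2t)\right),\qquad h(x):=\sum_{n=0}^{\infty}\frac{g_{n}^{*}(-1)}{n!}x^{n}.
\]
By Theorem \ref{thm:msherm} the hypothesis of non-trivial positive or negative Hermite multiplier sequence gives $h\in\lp^{s}$; the alternating case reduces similarly after multiplication by $e^{2x}$ to land in $\lp^{a}$. Completing the square writes $t(y-2t)=y^{2}/8-(2t-y/2)^{2}/2$, and expanding $h$ in a Taylor series around $y^2/8$ gives
\[
\sum_{k=0}^\infty P_k(y)(2t)^k=\sum_{l=0}^{\infty}\frac{(-1/2)^{l}\,h^{(l)}(y^{2}/8)}{l!}\,(2t-y/2)^{2l}.
\]
Extracting the coefficient of $t^{k}$ reduces, in the even case $k=2s$, the desired real-rootedness of $P_{2s}(y)$ to the claim that the auxiliary polynomial
\[
Y_{s}(u):=\sum_{l=0}^{s}\frac{g_{s+l}^{*}(-1)}{(2l)!\,(s-l)!\,2^{l}}\,u^{l}
\]
has only non-positive real zeros (with an analogous statement for $k$ odd).

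The hard part will be this last step. The classical Tur\'an inequalities on $\beta_{n}:=g_{n}^{*}(-1)$ alone do not yield the needed discriminant/positivity conditions for $Y_{s}$ (for instance, at $s=2$ one easily checks that the crude estimate one would want, $\beta_{3}^{2}\geq 2\beta_{2}\beta_{4}$, already fails for $h(x)=e^{x}$, yet $Y_{2}$ is still real-rooted in that case), so the full strength of $h\in\lp^{s}$ must be used. I would attack it in two stages. First, for polynomial $h(x)=c\prod_{\ell=1}^{K}(1+x/x_{\ell})$ with $x_{\ell}>0$, the generating function factors as $\prod_{\ell}(1+t(y-2t)/x_{\ell})$, so one may induct on $K$ and exploit the classical identity $H_{2s}(x)=(-1)^{s}4^{s}s!\,L_{s}^{(-1/2)}(x^{2})$ relating even Hermite polynomials to generalized Laguerre polynomials, whose zeros are automatically non-negative. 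Second, for general $h\in\lp^{s}$ one passes to the limit using uniform-on-compacta approximation and Hurwitz's theorem, which preserves real-rootedness of polynomials under such limits. Combined with the Hermite-Poulain step, this yields that every $Q_{k}(x)$ has only real zeros, as asserted.
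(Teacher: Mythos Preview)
Your reduction via the identity $H_m(x)=e^{-D^2/4}(2x)^m$ is correct: the factorization $Q_k(x)=e^{-D^2/4}P_k(2x)$ follows from \eqref{eq:forgacs}, the generating function $\sum_kP_k(y)(2t)^k=h(t(y-2t))$ is verified by a straightforward Cauchy product, and the passage from $P_{2s}$ to $Y_s$ is sound. But there is a genuine gap at exactly the point you flag as ``the hard part.'' The claim that every $P_k$ (equivalently every $Y_s$) has only real zeros is \emph{strictly stronger} than the theorem, since $e^{-D^2/4}$ is hyperbolicity preserving while its inverse $e^{D^2/4}$ is not; you have therefore replaced the statement by a harder one and then not proved it. Your proposed induction on the number of factors of $h$ gives the three-term recurrence
\[
P_k^{\text{new}}(y)=P_k(y)+\frac{y}{2x_\ell}\,P_{k-1}(y)-\frac{1}{2x_\ell}\,P_{k-2}(y),
\]
and such recurrences do not preserve real-rootedness without an interlacing hypothesis you never establish. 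Equivalently, the bivariate factor $x_\ell+ty-2t^2$ is \emph{not} real stable (take $t=i$, $y=(2+x_\ell)i$), so stable-polynomial machinery does not apply to your generating function. The role of the Hermite--Laguerre identity is left unexplained; it handles the base case $h(x)=e^{bx}$ but says nothing about the inductive step. As written, the argument is a plausible programme, not a proof.

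The paper's proof is entirely different and avoids this difficulty. It never attempts the stronger claim about $P_k$. Instead, Theorem~\ref{thm:herminter} gives the identity
\[
Q_n(x)=\frac{(-2)^n}{n!}\Bigl(\sum_{k=0}^\infty \frac{\tilde Q_k(x)}{2^k}D^k\Bigr)[x^n],
\]
where $\tilde T=\sum_k\tilde Q_k(x)D^k$ is the Hermite diagonal operator for the alternated sequence $\{(-1)^k\gamma_k\}$. Since $\{(-1)^k\gamma_k\}$ is again a (non-trivial) Hermite multiplier sequence, $\tilde T$ is hyperbolicity preserving; then the Borcea--Br\"and\'en theorem shows the rescaled operator $\sum_k\tilde Q_k(x)2^{-k}D^k$ is also hyperbolicity preserving, and applying it to the hyperbolic polynomial $x^n$ yields the conclusion in one line. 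This route uses a deep external input (Borcea--Br\"and\'en), whereas yours is more elementary in spirit; but yours is incomplete, and it is not clear your inductive plan can be repaired without importing a tool of comparable strength.
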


\begin{proof}
If $\{\gamma_k\}_{k=0}^\infty$ is a Hermite multiplier sequence, then $\{(-1)^k\gamma_k\}_{k=0}^\infty$ is also a Hermite multiplier sequence \cite[Proposition 119, p. 98]{Pio07}. Hence, 
\beq
\sum_{k=0}^\infty \tilde{Q}_k(x)D^k, 
\eeq
is a hyperbolicity preserver. Thus, using the Borcea-Br\"and\'en Theorem \cite[Theorem 5]{BB09} (which requires non-trivial), we conclude that the operator, 
\beq
\sum_{k=0}^\infty \frac{\tilde{Q}_k(x)}{2^k}D^k, 
\eeq
is also a hyperbolicity preserver. In particular, by Theorem \ref{thm:herminter}, for each $n\in\N_0$,
\beq
Q_n(x)=\frac{(-2)^n}{n!}\left(\sum_{k=0}^\infty \frac{\tilde{Q}_k(x)}{2^k}D^k\right) x^n, 
\eeq
has only real zeros. 
\end{proof}

Theorem \ref{thm:herminter} actually shows that for every $k\in\N_0$, $Q_k(x)$ and $Q_{k+1}(x)$ have real interlacing zeros \cite[Remark 6, p. 5]{Cha11}; i.e., for every $\alpha,\beta\in\R$, $k\in\N_0$, $\alpha Q_k(x)+\beta Q_{k+1}(x)$ has only real zeros. 

We also note that Theorem \ref{thm:herminter} seems to indicate that only the polynomials $\{x^n\}_{n=0}^\infty$ are needed to establish that a Hermite diagonal differential operator is a hyperbolicity preserver. This observation provides us a new algebraic characterization of Hermite multiplier sequences (cf. \cite[Theorem 46, p. 44]{Pio07}). 

\begin{theorem}\label{thm:minimal}
Let $\{\gamma_n\}_{n=0}^\infty$ be a non-zero, positive, classical multiplier sequence of real numbers and let $T$ be a Hermite diagonal differential operator, where $T[H_n(x)]:=\gamma_n H_n(x)$ for every $n\in\N_0$. Then $T$ is hyperbolicity preserving if and only if, 
\beq
T[x^n]\in\lp, 
\eeq
for every $n\in\N_0$. 
\end{theorem}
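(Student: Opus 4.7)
The forward implication is immediate: $x^n$ is hyperbolic, so hyperbolicity preservation of $T$ forces $T[x^n]\in\lp$. For the converse, my plan is to show $\{\gamma_n\}$ is a Hermite multiplier sequence. Set $f(x):=\sum_{k=0}^\infty \gamma_k x^k/k!$, which lies in $\lp^s$ by the hypothesis and Theorem~\ref{thm:ms}. By Theorem~\ref{thm:msherm}, the goal then reduces to showing $e^{-x}f\in\lp^s$, which by Theorem~\ref{thm:lpinc} is equivalent to the adjacent monotonicity $\gamma_k\le\gamma_{k+1}$ for every $k\in\N_0$.

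The argument then splits into two steps. First I would extract same-parity monotonicity from the hypothesis $T[x^n]\in\lp$. Using the inversion formula $x^n=\tfrac{n!}{2^n}\sum_{k=0}^{[n/2]}\tfrac{H_{n-2k}(x)}{k!(n-2k)!}$, a direct calculation yields
\[
T[x^n]=\gamma_n x^n+\tfrac{n(n-1)}{4}(\gamma_{n-2}-\gamma_n)x^{n-2}+(\text{lower-degree terms of the same parity as }n),
\]
and in particular the $x^{n-1}$ coefficient vanishes. If the $n$ zeros $r_1,\ldots,r_n$ of $T[x^n]$ are all real, then $\sum_i r_i=0$, so by Vieta's formula
\[
\tfrac{n(n-1)(\gamma_{n-2}-\gamma_n)}{4\gamma_n}=\sum_{i<j}r_ir_j=-\tfrac12\sum_i r_i^2\le 0.
\]
Since $\gamma_n>0$, this delivers $\gamma_{n-2}\le\gamma_n$ for every $n\ge 2$.

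Second I would exploit the classical multiplier sequence hypothesis via the Laguerre inequality $(g')^2\ge g g''$ valid on all of $\R$ for $g\in\lp$. Applied at $x=0$ to $g:=f^{(n-1)}$, which remains in $\lp^s$ by Remark~\ref{rm:lpinc}, this gives the Newton-type inequality $\gamma_n^2\ge\gamma_{n-1}\gamma_{n+1}$. Shifting the index and combining with the same-parity step,
\[
\gamma_{n+1}^2\;\ge\;\gamma_n\gamma_{n+2}\;\ge\;\gamma_n^2,
\]
and extracting non-negative square roots yields $\gamma_{n+1}\ge\gamma_n$. Theorems~\ref{thm:lpinc} and~\ref{thm:msherm} now close the argument. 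Any initial string of vanishing $\gamma_n$ causes no trouble, because Newton's inequality forces the zero terms of a positive classical multiplier sequence to form an initial segment, on which the desired monotonicity is trivial.

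The genuine obstacle is the \emph{parity gap}: $T[x^n]$ mixes only the eigenvalues $\gamma_{n-2k}$, so the hypothesis $T[x^n]\in\lp$ by itself produces comparisons only between $\gamma_k$'s of the same parity and can never directly compare $\gamma_k$ with $\gamma_{k+1}$. The Newton-type inequality contributed by the classical multiplier sequence hypothesis is precisely what bridges this gap; this explains both why neither hypothesis alone is enough and why the test polynomials $\{x^n\}_{n=0}^\infty$ are actually a minimal sufficient testing set.
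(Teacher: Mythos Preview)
Your proof is correct and follows essentially the same route as the paper's: expand $x^n$ in the Hermite basis, extract the same-parity inequality $\gamma_{n-2}\le\gamma_n$ from the hyperbolicity of $T[x^n]$, and then bridge the parity gap via the Tur\'an inequality coming from the classical multiplier sequence hypothesis. The only cosmetic difference is in how the same-parity inequality is extracted: the paper applies $D^{n-2}$ to $T[x^n]$ to reduce to the quadratic $\tfrac{n!\gamma_n}{4}\bigl(2x^2+(\gamma_{n-2}/\gamma_n-1)\bigr)$, whereas you read off the same information from Vieta's relations on the top two nonvanishing coefficients; these are equivalent maneuvers.
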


\begin{proof}
In order to establish the non-trivial direction, it suffices to show $T$ is hyperbolicity preserving; i.e., $\{\gamma_n\}_{n=0}^\infty$ is a Hermite multiplier sequence. We will make use of the fact that $H'_{n}(x)=2nH_{n-1}(x)$ for every $n\in\N$ \cite[p. 188]{Rai60}. By assumption, for each $n\ge 2$, the following polynomial has only real zeros (see \cite[p. 194]{Rai60} for the Hermite expansion of $x^n$), 
\begin{align}
D^{n-2}T[x^n]
&=D^{n-2}T\left[\frac{n!}{2^n}\sum_{k=0}^{[n/2]} \frac{1}{k!(n-2k)!}H_{n-2k}(x)\right] \\
&=D^{n-2}\frac{n!}{2^n} \sum_{k=0}^{[n/2]} \frac{\gamma_{n-2k}}{k!(n-2k)!} H_{n-2k}(x) \\
&=\frac{n!}{2^n}\left(\gamma_n \frac{2^{n-2}}{2!}H_2(x)+\gamma_{n-2} \frac{2^{n-2}}{1!} H_0(x)\right) \\
&=n!\left(\frac{\gamma_n}{8}(4x^2-2)+\frac{\gamma_{n-2}}{4}(1)\right) \\
&=\frac{n!\gamma_n}{4}\left(2x^2+\left(\frac{\gamma_{n-2}}{\gamma_n}-1\right)\right).
\end{align}
Hence, $\frac{\gamma_{n-2}}{\gamma_{n}}\le 1$ for every $n\ge 2$. Following the outline of A. Piotrowski \cite[Theorem 127, p. 107]{Pio07},  since $\{\gamma_n\}_{n=0}^\infty$ is assumed to be a multiplier sequence, then the Tur\'an inequalities hold, $\gamma_{n-1}^2-\gamma_{n-2}\gamma_n\ge 0$ for every $n\ge 2$. Hence, for each $n\ge 2$,
\beq
1\le \frac{\gamma_{n}}{\gamma_{n-2}}\le \left(\frac{\gamma_{n-1}}{\gamma_{n-2}}\right)^2. 
\eeq
Thus, $\gamma_{n-2}\le \gamma_{n-1}$ for $n\ge 2$, and therefore $\{\gamma_n\}_{n=0}^\infty$ is a Hermite multiplier sequence (see Theorem \ref{thm:lpinc} and \ref{thm:msherm}).  
\end{proof}

\vspace{.25in}
\section{Operator Diagonalizations of Laguerre Multiplier Sequences}

The main objective of this section is exactly the same as that of the previous. We provide a few preliminary remarks for Laguerre multiplier sequences, we then find a formula for the $b_{n,k}$'s (see \eqref{eq:aform}), and finally we show that the $b_{n,k}$'s (see \eqref{eq:aform}) that arise from a Laguerre multiplier sequence yield more Laguerre multiplier sequences. The subtlety of the proceeding results can be seen in Examples \ref{ex:qua}-\ref{ex:notdia}, particularly Example \ref{ex:slag}. 

\begin{lemma}\label{lem:zerolag}
For $k,n\in\N_0$, the $k^\text{th}$ derivative of the $n^\text{th}$ Laguerre polynomial \textup{(}Definition \ref{def:poly}\textup{)} evaluated at zero is, 
\beq
L_{n}^{(k)}(0)=\binom{n}{k}(-1)^k. 
\eeq
\end{lemma}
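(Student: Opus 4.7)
The plan is to prove this lemma by a completely direct calculation, working straight from the explicit power series expansion of $L_n(x)$ provided in Definition \ref{def:poly}. There is no need to invoke the differential equation, Rodrigues' formula, or any generating function identity, since the assertion only concerns the Taylor coefficient of $L_n(x)$ at the origin.

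First I would write
\beq\nonumber
L_n(x)=\sum_{j=0}^n \frac{(-1)^j}{j!}\binom{n}{j}x^j,
\eeq
and differentiate termwise $k$ times, obtaining
\beq\nonumber
L_n^{(k)}(x)=\sum_{j=k}^n \frac{(-1)^j}{j!}\binom{n}{j}\frac{j!}{(j-k)!}x^{j-k},
\eeq
where the sum starts at $j=k$ because lower-order terms vanish under $k$-fold differentiation. Evaluating at $x=0$ kills every term except the one with $j=k$, leaving
\beq\nonumber
L_n^{(k)}(0)=\frac{(-1)^k}{k!}\binom{n}{k}\cdot k!=(-1)^k\binom{n}{k},
\eeq
which is the claimed identity. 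The only step requiring any care is the bookkeeping of the factorial cancellation and the observation that the lower limit of the summation must be raised to $j=k$; neither presents a genuine obstacle. If $k>n$ the formula still holds with both sides equal to zero, so no separate case analysis is needed.

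Since this lemma is the Laguerre analogue of Lemma \ref{lem:zeroherm}, it will serve the same role in the next theorem: namely, allowing one to extract $Q_{k+n}^{(k)}(0)$ from the representation \eqref{eq:aform} when the underlying basis is Laguerre. Accordingly, I would keep the statement and proof as brief as possible, reserving any substantive combinatorial manipulation for the subsequent theorems that use this lemma as an ingredient.
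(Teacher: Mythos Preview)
Your proof is correct and is exactly the kind of direct computation one would expect here. Note that the paper itself states this lemma without proof (as with the companion Lemma~\ref{lem:zeroherm}), treating the identity as an immediate consequence of the explicit formula in Definition~\ref{def:poly}; your write-up simply fills in that routine verification.
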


\begin{lemma}\label{lm:horrible}
Let $n$, $m$, and $p$ be integers. We then have the following combinatorial identity, 
\begin{align}
\sum_{k=0}^n\sum_{j=0}^m\binom{m}{j}&\binom{k-j}{p-j}\binom{p}{k-j}\binom{n+1}{k+m-j} \nonumber\\
& = \binom{n+1}{p}\binom{n+1}{m} - \binom{n+1-m}{p-m}\binom{p}{n+1-m}. 
\end{align}
\end{lemma}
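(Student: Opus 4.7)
The plan is to route the double sum through Vandermonde's convolution, accounting for the finite upper bound $k\le n$ by one explicit correction term. First, I would apply the trinomial-revision identity $\binom{p}{k-j}\binom{k-j}{p-j} = \binom{p}{j}\binom{j}{k-p}$ to decouple the $j$- and $k$-dependencies in the summand, rewriting the left-hand side as
$$\sum_{j=0}^m \binom{m}{j}\binom{p}{j}\sum_{k=0}^n \binom{j}{k-p}\binom{n+1}{k+m-j}.$$

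Next, I would formally extend the outer sum from $k\le n$ to $k\le n+1$. Since $\binom{n+1}{k+m-j}=0$ whenever $k+m-j>n+1$ and $j\le m$, the only additional non-zero contribution arises at $(k,j)=(n+1,m)$; its value is $\binom{p}{m}\binom{m}{n+1-p}$, which equals $\binom{n+1-m}{p-m}\binom{p}{n+1-m}$ by one more trinomial revision — precisely the correction appearing on the right-hand side. On the extended range, the substitution $\ell=k-p$ together with $\binom{j}{\ell}=\binom{j}{j-\ell}$ turns the inner sum into a Vandermonde convolution: $\sum_{\ell\ge 0}\binom{j}{\ell}\binom{n+1}{\ell+p+m-j}=\binom{n+1+j}{p+m}$.

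The problem is then reduced to proving the auxiliary identity
$$\sum_{j=0}^m \binom{m}{j}\binom{p}{j}\binom{n+1+j}{p+m} = \binom{n+1}{p}\binom{n+1}{m},$$
which I would establish by expanding $\binom{n+1+j}{p+m}$ via Vandermonde as $\sum_k \binom{n+1}{k}\binom{j}{p+m-k}$, swapping the order of summation, applying the trinomial revision $\binom{p}{j}\binom{j}{p+m-k}=\binom{p}{p+m-k}\binom{k-m}{j-p-m+k}$, evaluating the resulting inner $j$-sum by a second Vandermonde, and finally combining one more trinomial revision (to produce $\binom{n+1}{p}\binom{n+1-p}{k-p}$) with a third Vandermonde convolution to collapse the remaining $k$-sum into $\binom{n+1}{p}\binom{n+1}{m}$. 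Subtracting the correction term from the first step yields the lemma.

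I expect the main difficulty to lie in this auxiliary identity: although every individual manipulation is elementary, the chain of three Vandermonde convolutions interleaved with two trinomial revisions is delicate, and requires careful bookkeeping of the summation ranges — in particular, the implicit restriction $k\ge m$ hidden in the vanishing of $\binom{k-m}{\cdot}$ in the second trinomial revision, and the off-by-one behaviour in the passage from the bounded to the extended inner sum.
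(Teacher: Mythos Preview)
Your proposal is correct, and shares with the paper the key opening observation: extending the outer sum to $k\le n+1$ contributes exactly the single term $\binom{n+1-m}{p-m}\binom{p}{n+1-m}$, reducing the lemma to the clean identity~\eqref{eq:eq13}. From there, however, the paper takes a shorter path. Rather than decoupling via the trinomial revision $\binom{p}{k-j}\binom{k-j}{p-j}=\binom{p}{j}\binom{j}{k-p}$, the paper substitutes $l=k-j$ directly, which immediately isolates a Vandermonde $\sum_j\binom{m}{j}\binom{l}{p-j}=\binom{m+l}{p}$ and leaves the single sum $\sum_l\binom{p}{l}\binom{n+1}{m+l}\binom{m+l}{p}$. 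One reindexing and one further trinomial-plus-Vandermonde step then finishes the argument. Your route is sound, but your auxiliary identity $\sum_{j}\binom{m}{j}\binom{p}{j}\binom{n+1+j}{p+m}=\binom{n+1}{p}\binom{n+1}{m}$ costs an extra Vandermonde expansion, two additional trinomial revisions, and two additional convolutions --- roughly twice the bookkeeping. The substitution $l=k-j$ is the efficiency gain you are missing: it replaces your initial trinomial decoupling and delivers a simpler intermediate sum at no cost.
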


\begin{proof}
We first note that $\binom{n+1-m}{p-m}\binom{p}{n+1-m}$ can be added to the summation, hence, we wish to show, 
\beq\label{eq:eq13}
\sum_{k=0}^{n+1}\sum_{j=0}^{m}\binom{m}{j}\binom{k-j}{p-j}\binom{p}{k-j}\binom{n+1}{k+m-j}=\binom{n+1}{p}\binom{n+1}{m}.
\eeq
We perform a substitution of $l=k-j$ on the left side of \eqref{eq:eq13} and then apply two Vandermonde identities \cite[pp. 9, 15]{Rio79}, 
\begin{align}
\sum_{k=0}^{n+1}\sum_{j=0}^m\binom{m}{j}\binom{k-j}{p-j}\binom{p}{k-j}\binom{n+1}{k+m-j} 
&=\sum_{l=0}^{n+1}\binom{p}{l}\binom{n+1}{m+l}\mathcol{\sum_{j=0}^m\binom{m}{j}\binom{l}{p-j}} \\
&=\sum_{l=0}^{n+1}\binom{p}{l}\binom{n+1}{m+l}\mathcol{\binom{m+l}{p}} \\ 
&=\sum_{j=0}^{n+1}\binom{p}{j-m}\binom{n+1}{j}\binom{j}{p} \\ 
&=\mathcol{\sum_{j=0}^{n+1}\binom{j}{p}\binom{p}{j-m}\binom{n+1}{j}} \\ 
&=\mathcol{\binom{n+1}{p}\binom{n+1}{m}}.  \nonumber\tag*{\qedhere}
\end{align}
\end{proof}

\begin{theorem}\label{thm:lagissums}
Let $T$ be a Laguerre diagonal differential operator, $T[L_n(x)]:=\gamma_n L_n(x)$, where $\{\gamma_n\}_{n=0}^\infty$ a sequence of real numbers. Then there is a sequence of polynomials, $\{Q_k(x)\}_{k=0}^\infty$, and a sequence of classical diagonal differential operators, $\{T_n\}_{n=0}^\infty$, such that 
\beq\nonumber
T[L_n(x)]:=\left(\sum_{k=0}^\infty Q_k(x)D^k\right) L_n(x) = \left(\sum_{k=0}^\infty T_kD^k\right) L_n(x) = \gamma_n L_n(x). 
\eeq
Then, for each $n\in\N_0$,
\beq\nonumber
\{b_{n,m}\}_{m=0}^\infty:=\left\{\sum_{k=0}^m \binom{m}{k} \frac{(-1)^n}{n!} \left(\sum_{j=0}^n \binom{n}{j} \frac{(k+j)!}{((k+j)-n)!}g_{k+j}^*(-1)\right)\right\}_{m=0}^\infty, 
\eeq
where $T_n[x^m]=b_{n,m}x^m$ for every $n,m\in\N_0$. 
\end{theorem}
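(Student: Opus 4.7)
The strategy will closely mirror the proof of Theorem \ref{thm:hermiteissums}. The existence of the polynomial sequence $\{Q_k(x)\}_{k=0}^\infty$ is guaranteed by Theorem \ref{thm:piotr}, and the existence of the classical diagonal differential operators $\{T_n\}_{n=0}^\infty$, together with the key identity
\[
b_{n,m}=\sum_{k=0}^m \binom{m}{k} Q_{k+n}^{(k)}(0),
\]
follows directly from Theorem \ref{thm:classic} (see equation \eqref{eq:aform}). Thus the entire theorem reduces to computing $Q_{k+n}^{(k)}(0)$ in closed form in terms of the reversed Jensen polynomial values $g_\ell^*(-1)$, and showing that the outer binomial sum against $\binom{m}{k}$ produces exactly the advertised expression for $b_{n,m}$.

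The first main step is to establish an explicit representation formula for the polynomial coefficients $Q_m(x)$ of a Laguerre diagonal differential operator, playing the role that the Forg\'acs--Piotrowski formula \eqref{eq:forgacs} played in the Hermite case. I would expect such a formula to write $Q_m(x)$ as a linear combination of Laguerre polynomials $L_j(x)$ weighted by values $g_\ell^*(-1)$ with explicit binomial and factorial coefficients. To derive it I would use the recursive formula \eqref{equ13} of Theorem \ref{thm:piotr} together with the fact that $L_m^{(m)}=(-1)^m$, proving the candidate formula by induction on $m$. (An alternative is to apply $T$ formally to the generating function $\sum_n L_n(x)t^n=\frac{1}{1-t}e^{-xt/(1-t)}$, match coefficients of $t^m$, and expand; this mirrors the generating-function argument given for Theorem \ref{thm:hermcomplex}.) This is the theorem referenced in the introduction as \emph{Theorem \ref{thm:lagqk}}, so I would assume it is available at this point in the paper.

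Once the closed form of $Q_m(x)$ is in hand, the derivative $Q_m^{(k)}(0)$ is computed by differentiating termwise and evaluating at zero via Lemma \ref{lem:zerolag}, which gives $L_j^{(k)}(0)=\binom{j}{k}(-1)^k$. Substituting $m=k+n$ yields $Q_{k+n}^{(k)}(0)$ as a single sum over $g_\ell^*(-1)$, with $\ell$ ranging over a window determined by $k$ and $n$. Inserting this into $b_{n,m}=\sum_{k=0}^m \binom{m}{k}Q_{k+n}^{(k)}(0)$ produces a double sum, and a reindexing should put the inner sum into the exact $\binom{n}{j}\frac{(k+j)!}{((k+j)-n)!}g_{k+j}^*(-1)$ shape claimed.

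The main obstacle will be the combinatorial simplification: unlike in the Hermite case, the Laguerre polynomials do not vanish at $0$ under repeated differentiation by parity, so the analog of Lemma \ref{lem:zeroherm} (namely Lemma \ref{lem:zerolag}) introduces a full triangular array of nonzero values $\binom{j}{k}(-1)^k$ rather than a sparse set. This forces a genuinely nontrivial binomial reorganization to collapse the resulting triple sum into the desired double sum, and this is exactly the role of Lemma \ref{lm:horrible}: the intricate identity
\[
\sum_{k=0}^n\sum_{j=0}^m\binom{m}{j}\binom{k-j}{p-j}\binom{p}{k-j}\binom{n+1}{k+m-j}=\binom{n+1}{p}\binom{n+1}{m}-\binom{n+1-m}{p-m}\binom{p}{n+1-m}
\]
is tailored to effect this collapse. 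Thus I anticipate that the proof ends with a careful bookkeeping of indices followed by a single application of Lemma \ref{lm:horrible} to reach the stated formula for $\{b_{n,m}\}_{m=0}^\infty$.
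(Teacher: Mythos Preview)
Your overall strategy is sound and close to the paper's, but two points of the plan are misaligned with how the argument actually goes.

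First, you cannot invoke Theorem \ref{thm:lagqk} as input: in the paper it appears \emph{after} Theorem \ref{thm:lagissums} and is stated as a direct consequence of the computations carried out in this proof (see the sentence preceding Theorem \ref{thm:lagqk}). So the Laguerre case does not parallel the Hermite case structurally: there the Forg\'acs--Piotrowski formula \eqref{eq:forgacs} was already available and one simply differentiated it, whereas here the paper proves the formula for $Q_n^{(m)}(0)$ (equation \eqref{eq:lagjens}) directly, by induction on $n$ via the recursion \eqref{equ13}, and only afterwards reads off the closed form of $Q_n(x)$.

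Second, you have misplaced the role of Lemma \ref{lm:horrible}. Once the identity
\[
Q_{k+n}^{(k)}(0)=\frac{(-1)^n}{n!}\sum_{j=0}^n\binom{n}{j}\frac{(k+j)!}{(k+j-n)!}\,g_{k+j}^*(-1)
\]
is established, the stated formula for $b_{n,m}$ is \emph{literally} the definition $b_{n,m}=\sum_{k=0}^m\binom{m}{k}Q_{k+n}^{(k)}(0)$ with this expression substituted in; no further collapse or reindexing is required. The heavy combinatorics of Lemma \ref{lm:horrible} is used instead \emph{inside} the inductive step: expanding $Q_{n+1}^{(m)}(0)$ from \eqref{equ13} via Leibniz' rule and Lemma \ref{lem:zerolag} produces exactly the triple sum on the left of Lemma \ref{lm:horrible}, and the lemma is what reduces it to the right-hand side of \eqref{eq:lagjens} at level $n+1$. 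So your instinct that Lemma \ref{lm:horrible} is the crux is correct, but it belongs in the induction verifying $Q_n^{(m)}(0)$, not in a post-processing of $b_{n,m}$.
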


\begin{proof}
The existence of the sequences $\{Q_k(x)\}_{k=0}^\infty$ and $\{T_k\}_{k=0}^\infty$ are established by Theorem \ref{thm:piotr} and \ref{thm:classic}. Recall from Theorem \ref{thm:classic} that, 
\beq
\{b_{n,m}\}_{m=0}^\infty=\left\{\sum_{k=0}^m \binom{m}{k} Q_{k+n}^{(k)}(0)\right\}_{m=0}^\infty. 
\eeq
Hence, we wish to verify that,   
\beq\label{form13}
Q_{k+n}^{(k)}(0)=\frac{(-1)^n}{n!} \left(\sum_{j=0}^n \binom{n}{j} \frac{(k+j)!}{((k+j)-n)!}g_{k+j}^*(-1)\right). 
\eeq
To ease the verification process, we first rewrite formula \eqref{form13} as follows, 
\beq\label{eq:lagjens}
Q_n^{(m)}(0)=\sum_{p=0}^{n} (-1)^{n-m} \binom{n-m}{p-m}\binom{p}{n-m}g_p^*(-1). 
\eeq
We will now verify formula \eqref{eq:lagjens}, \textit{tour de force}, by induction. Suppose for every $m\in\N_0$ and $k\in\{0,1,\ldots,n\}$, formula \eqref{eq:lagjens} holds for $Q_k^{(m)}(0)$. We now calculate $Q_{n+1}^{(m)}(0)$ using the recursive formula of Theorem \ref{thm:piotr}, equation \eqref{eq:jenreverse}, and Lemma \ref{lem:zerolag} and \ref{lm:horrible}, 
\begin{align}
Q_{n+1}^{(m)}(0) 
=&\frac{1}{L_{n+1}^{(n+1)}}\left(\mathcol{\gamma_{n+1}}\ \mathcol{L_{n+1}^{(m)}(0)} - \sum_{k=0}^n \frac{d^m}{dx^m}\left.\left[Q_k(x) L_{n+1}^{(k)}(x)\right]\right|_{x=0} \right) \\
=& (-1)^{n+1}\Bigg(\mathcol{\sum_{p=0}^{n+1} \binom{n+1}{p}g_p^*(-1)}\ \mathcol{\binom{n+1}{m}(-1)^m} \nonumber\\
 &\ \hspace{.2in}- \sum_{k=0}^n \sum_{j=0}^m \binom{m}{j} \mathcol{Q_k^{(j)}(0)}\ \mathcol{L_{n+1}^{(k+m-j)}(0)} \Bigg) \\
=& (-1)^{n+1}\Bigg(\sum_{p=0}^{n+1} \binom{n+1}{p}g_p^*(-1)\ \binom{n+1}{m}(-1)^m \nonumber\\
 &\ \hspace{.2in}-\sum_{k=0}^n \sum_{j=0}^m \binom{m}{j} \mathcol{\left(\sum_{p=0}^{n+1} \binom{k-j}{p-j}\binom{p}{k-j} (-1)^{k-j} g_p^*(-1)\right)} \nonumber\\ 
 &\ \hspace{1.7in}\mathcol{\left( \binom{n+1}{k+m-j}(-1)^{k+m-j}\right)} \Bigg) \\
=&\sum_{p=0}^{n+1} \Bigg( (-1)^{n+1-m}\Bigg(\binom{n+1}{p}\binom{n+1}{m} \nonumber\\
 &\ \hspace{.2in}-\sum_{k=0}^n\sum_{j=0}^m \binom{m}{j}\binom{k-j}{p-j}\binom{p}{k-j}\binom{n+1}{k+m-j}\Bigg)  \Bigg) g_p^*(-1) \\
=& \sum_{p=0}^{n+1} (-1)^{n+1-m}\binom{n+1-m}{p-m}\binom{p}{n+1-m} g_p^*(-1). \nonumber\tag*{\qedhere}
\end{align}
\end{proof}

Similar to the Hermite case (see Theorem \ref{thm:herm}) the following theorem establishes a Rodrigues type formula between $h_n(x)$ ($n\in\N_0$) and $f(x)$. This formula then relates the hyperbolicity preservation of $T$ with each $T_n$ ($n\in\N_0$). 

\begin{theorem}\label{thm:lag}
Suppose $\{\gamma_k\}_{k=0}^\infty$ is a non-trivial Laguerre multiplier sequence and let $\{g_k^*(x)\}_{k=0}^\infty$ be the reversed Jensen polynomials associated with $\{\gamma_k\}_{k=0}^\infty$. Then, for each $n\in\N_0$,
\beq\label{eq:crazylag}\nonumber
\{b_{n,m}\}_{m=0}^\infty:=\left\{\sum_{k=0}^m \binom{m}{k} \frac{(-1)^n}{n!} \left(\sum_{j=0}^n \binom{n}{j} \frac{(k+j)!}{((k+j)-n)!}g_{k+j}^*(-1)\right)\right\}_{m=0}^\infty, 
\eeq
is a Laguerre multiplier sequence. 
\end{theorem}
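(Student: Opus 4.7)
The plan is to mimic the structure of the Hermite proof (Theorem \ref{thm:herm}) by deriving a Rodrigues-type factorization that expresses the generating function
$$F_n(x) := \sum_{k=0}^\infty g_{n,k}^*(-1)\, x^k$$
of the reversed Jensen evaluations of the candidate sequence $\{b_{n,m}\}_{m=0}^\infty$ in terms of $f(x) := \sum_{k=0}^\infty g_k^*(-1)\, x^k$, which by Theorem \ref{thm:mslag} and the hypothesis is a polynomial in $\R[x]\cap\lp^s[-1,0]$. Applying the Jensen inversion of Theorem \ref{thm:comb} (treating $Q_{k+n}^{(k)}(0)$ as the ``$\alpha$'' sequence and $b_{n,m}$ as the ``$\beta$'' sequence) yields $g_{n,k}^*(-1) = Q_{k+n}^{(k)}(0)$, which Theorem \ref{thm:lagissums} has already computed in closed form. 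By Theorem \ref{thm:mslag} it suffices to show $F_n(x)\in \R[x]\cap\lp^s[-1,0]$.

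The central computation is the simplification of the resulting double sum. Substituting the Theorem \ref{thm:lagissums} formula into $F_n(x)$, interchanging summations, and reindexing via $l = k+j$, each inner sum takes the shape $\sum_{l \ge j} \tfrac{l!}{(l-n)!} g_l^*(-1) x^{l-j}$. The falling factorial vanishes for $l < n$, and since $j \le n$, one may extend the sum to $l \ge n$ without altering its value, which after factoring out $x^{n-j}$ reveals exactly $D^n f(x)$. The outer sum over $j$ then collapses via the binomial theorem, $\sum_{j=0}^n \binom{n}{j} x^{n-j} = (x+1)^n$, producing the clean Rodrigues-type formula
$$F_n(x) = \frac{(-1)^n}{n!} (x+1)^n D^n f(x).$$

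To conclude, I would invoke Remark \ref{rm:lpinc} together with Rolle's theorem to argue that $D^n f \in \lp^s[-1,0]$ whenever $f \in \lp^s[-1,0]$: the derivative preserves same-sign Taylor coefficients, and Rolle's theorem confines the zeros of $D^n f$ to the convex hull of the zeros of $f$. The multiplier $(x+1)^n$ has non-negative coefficients with its only zero at $-1 \in [-1,0]$, so the product $(x+1)^n D^n f(x)$ remains in $\lp^s[-1,0]$, and the scalar $\tfrac{(-1)^n}{n!}$ merely flips the overall sign. Theorem \ref{thm:mslag} then certifies that $\{b_{n,m}\}_{m=0}^\infty$ is a Laguerre multiplier sequence. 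The main technical hurdle is anticipating this factorization in the first place and verifying the vanishing of the low-index terms $l < n$; without that vanishing, the $x^{n-j}$ cannot be pulled out cleanly, the binomial collapse to $(x+1)^n$ fails, and the connection back to $f$ is lost.
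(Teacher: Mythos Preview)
Your proposal is correct and follows essentially the same route as the paper: your $F_n$ coincides with the paper's $h_n$, and both arrive at the identical Rodrigues-type formula $F_n(x)=\frac{(-1)^n}{n!}(x+1)^n D^n f(x)$ via the same Jensen inversion and interchange of summations. You are somewhat more explicit than the paper about the vanishing of the low-index terms and about why the product lands back in $\R[x]\cap\lp^s[-1,0]$, but the argument is the same.
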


\begin{proof}
By assumption, $\{\gamma_k\}_{k=0}^\infty$ is a Laguerre multiplier sequence. Hence, by Theorem \ref{thm:mslag}, 
\beq\label{eq:lagfx}
f(x)=\sum_{k=0}^\infty \frac{f^{(k)}(0)}{k!} x^k := \sum_{k=0}^\infty g_k^*(-1) x^k \in \R[x]\cap\lp^s[-1,0]. 
\eeq
To show that, $\{b_{n,m}\}_{m=0}^\infty$ is a Laguerre multiplier sequence we must show that, 
\beq
h_n(x):=\sum_{m=0}^\infty \left(\sum_{k=0}^m \binom{m}{k} b_{n,k} (-1)^{m-k} \right) x^m \in \R[x]\cap\lp^s[-1,0]. 
\eeq
We use Theorem \ref{thm:comb} and perform the following calculations, 
\begin{align}
h_n(x)
&=\sum_{k=0}^\infty \left(\sum_{j=0}^k \binom{k}{j} b_{n,j} (-1)^{k-j} \right) x^k \\
&=\sum_{k=0}^\infty \left(\frac{(-1)^n}{n!} \left(\sum_{j=0}^n \binom{n}{j} \frac{(k+j)!}{((k+j)-n)!}g_{k+j}^*(-1)\right) \right)x^k \\
&=\frac{(-1)^n}{n!} \sum_{j=0}^n \binom{n}{j} \sum_{k=0}^\infty \left( \frac{(k+j)!}{((k+j)-n)!}g_{k+j}^*(-1)\right) x^k \\
&=\frac{(-1)^n}{n!} \sum_{j=0}^n \binom{n}{j} \sum_{k=0}^\infty \frac{f^{(k+j)}(0)}{((k+j)-n)!} x^k \\
&=\frac{(-1)^n}{n!} \sum_{j=0}^n \binom{n}{j}x^{n-j} D^n f(x) \\
&=\frac{(-1)^n}{n!} (1+x)^n D^n f(x). \label{eq:lagrecur}
\end{align}
Hence, if $f(x)\in\R[x]\cap\lp^s[-1,0]$, then $h_n(x)\in\R[x]\cap\lp^s[-1,0]$. 
\end{proof}

Similar to the Hermite case, equation \eqref{eq:lagrecur} also provides a recursive formula, 
\beq
h_n(x):=\frac{-1}{n}(x+1)^nD(x+1)^{1-n}h_{n-1}(x),\ \ \ \ \ (n\ge 1,\ h_0(x):=f(x)). 
\eeq
Thus, again, the hyperbolicity preservation of $T_n$ with a Laguerre multiplier sequence, is enough to establish that $T_{n+1}$ is hyperbolicity preserving with a Laguerre multiplier sequence. 

\begin{example}
We show, similar to Examples \ref{ex:notherm1} and \ref{ex:notherm2}, that it is possible for $T_n$ to be hyperbolicity preserving for every $n$ and yet $T$ fail to be hyperbolicity preserving. Consider the following non-Laguerre multiplier sequence (see \eqref{eq:lagfun} and Theorem \ref{thm:mslag}), 
\beq
\{a_n\}_{n=0}^\infty:=\{2,3,4,5,6,\ldots\}, 
\eeq
where
\beq
T[L_n(x)]:=a_nL_n(x). 
\eeq
From Theorem \ref{thm:classic}, we obtain $T=\sum_{n=0}^\infty T_n D^n$, where $T_n[x^m]=b_{n,m}x^m$ (see \eqref{eq:aform}) are classical diagonal differential operators. We calculate $f(x)$ from equation \eqref{eq:lagfx}, 
\beq\label{eq:lagfun}
f(x):=\sum_{k=0}^\infty g_k^*(-1)x^k = x+2. 
\eeq
Hence by formula \eqref{eq:lagrecur},
\begin{align*}
h_0(x)&=\sum_{k=0}^\infty \left(\sum_{j=0}^k \binom{k}{j} b_{0,j} (-1)^{k-j} \right) x^k=x+2, \\
h_1(x)&=\sum_{k=0}^\infty \left(\sum_{j=0}^k \binom{k}{j} b_{1,j} (-1)^{k-j} \right) x^k=-x-1,\ \ \ \ \ \text{and} \\
h_n(x)&=\sum_{k=0}^\infty \left(\sum_{j=0}^k \binom{k}{j} b_{n,j} (-1)^{k-j} \right) x^k=0,\ \ \ \ \ \text{for}\ n\ge 2.
\end{align*}
We see that $h_0(x)\not\in\R[x]\cap\lp^s[-1,0]$, hence $\{b_{0,k}\}_{k=0}^\infty$ is not a Laguerre multiplier sequence (see Theorem \ref{thm:mslag}). However, if we define a classical multiplier sequence, $W[x^m]:=\frac{1}{m!}x^m$, then 
\beq
\sum_{k=0}^\infty \frac{b_{n,k}}{k!}x^k = e^x W[h_n(x)]\in\lp^s. 
\eeq
Hence, $\{b_{0,k}\}_{k=0}^\infty$ is a classical multiplier sequence (see Theorem \ref{thm:ms}). In addition, $h_n(x)\in\R[x]\cap\lp^s[-1,0]$ for $n\ge 1$. Thus, each $T_n$ ($n\ge 0$) is hyperbolicity preserving (see Theorem \ref{thm:mslag}), each $T_n$ ($n\ge 1$) diagonalizes with a Laguerre multiplier sequence, but $T$ itself is not a hyperbolicity preserver. 
\end{example}

From the calculations of \eqref{eq:lagjens} we can also provide a formula for the $Q_k$'s in a Laguerre differential operator (cf. Theorem \ref{thm:hermqk} and \ref{thm:hermcomplex}). 

\begin{theorem}\label{thm:lagqk}
Let $\{\gamma_n\}_{n=0}^\infty$ be a sequence of real numbers and $\{Q_k(x)\}_{k=0}^\infty$ be a sequence of polynomials, such that, 
\beq
T[L_n(x)]:=\left(\sum_{k=0}^\infty Q_k(x)D^k\right)L_n(x)=\gamma_nL_n(x). 
\eeq
Then for each $n\in\N_0$, 
\beq\label{eq:lagqk}
Q_n(x)=\sum_{k=0}^{n}\left(\sum_{p=0}^{n} (-1)^{n-k} \binom{n-k}{p-k}\binom{p}{n-k}g_p^*(-1)\right)x^k, 
\eeq
where $\{g_k^*(x)\}_{k=0}^\infty$ are the associated reversed Jensen polynomials of $\{\gamma_n\}_{n=0}^\infty$. 
\end{theorem}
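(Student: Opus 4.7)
The plan is to recognize that this theorem is essentially a corollary of an intermediate identity already established (inside the proof of Theorem \ref{thm:lagissums}), repackaged via Taylor's theorem. Because $T$ is a diagonal differential operator, the recursive formula of Theorem \ref{thm:piotr} forces $\deg(Q_n(x)) \le n$ for every $n \in \N_0$; consequently, each $Q_n(x)$ is completely determined by the finitely many values $Q_n^{(0)}(0), Q_n^{(1)}(0), \ldots, Q_n^{(n)}(0)$, and one recovers
$$Q_n(x) = \sum_{k=0}^n \frac{Q_n^{(k)}(0)}{k!}\, x^k.$$

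The first step I would take is to extract (or re-derive) the closed-form identity \eqref{eq:lagjens},
$$Q_n^{(m)}(0) = \sum_{p=0}^n (-1)^{n-m}\binom{n-m}{p-m}\binom{p}{n-m}\, g_p^*(-1).$$
This is proven by induction on $n$: the base case is $Q_0(x) = \gamma_0 = g_0^*(-1)$, and in the inductive step one computes $Q_{n+1}^{(m)}(0)$ by applying the recursive formula \eqref{equ13}, using Lemma \ref{lem:zerolag} to evaluate $L_{n+1}^{(j)}(0)$, expanding $\gamma_{n+1}$ in terms of the $g_p^*(-1)$'s via \eqref{eq:jenreverse}, and substituting the inductive hypothesis for each $Q_k^{(j)}(0)$ with $k \le n$. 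Reorganizing the resulting triple sum by the index $p$ produces exactly the combinatorial expression that Lemma \ref{lm:horrible} collapses to the desired form.

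The second step is then purely mechanical: substitute \eqref{eq:lagjens} into the Taylor expansion of $Q_n(x)$ above to obtain \eqref{eq:lagqk}. The main obstacle is not located in the present theorem at all but in the combinatorial identity of Lemma \ref{lm:horrible}, whose proof requires a double application of Vandermonde's convolution to reduce a triple sum to a product of two binomial coefficients; that computation has already been carried out. Once both \eqref{eq:lagjens} and the degree bound $\deg(Q_n) \le n$ are in hand, the theorem is a matter of bookkeeping.
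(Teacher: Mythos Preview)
Your proposal is correct and matches the paper's approach exactly: the paper presents Theorem \ref{thm:lagqk} without a separate proof, simply noting that it follows ``from the calculations of \eqref{eq:lagjens},'' and you have spelled out precisely that derivation (degree bound from Theorem \ref{thm:piotr}, the inductive verification of \eqref{eq:lagjens} via Lemma \ref{lem:zerolag}, \eqref{eq:jenreverse}, and Lemma \ref{lm:horrible}, then Taylor reconstruction).
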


Similar to Theorem \ref{thm:hermcomplex}, we provide another formula for the $Q_k$'s in a Laguerre diagonal differential operator (cf. \cite[Proposition 216, p. 107]{Cha11}). 

\begin{theorem}\label{thm:lagqk2}
Let $\{\gamma_n\}_{n=0}^\infty$ be a sequence of real numbers and $\{Q_k(x)\}_{k=0}^\infty$ be a sequence of polynomials, such that, 
\beq
T[L_n(x)]:=\left(\sum_{k=0}^\infty Q_k(x)D^k\right)L_n(x)=\gamma_nL_n(x). 
\eeq
Then for each $n\in\N_0$, 
\beq\label{eq:lagqnn}
Q_n(x)=\sum_{k=0}^{n}\frac{(-x)^{k}}{k!}\sum_{j=0}^{n-k} \binom{n-k}{j} (-1)^j\gamma_{j}L_{j}(x). 
\eeq
\end{theorem}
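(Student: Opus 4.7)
The plan is to mirror Theorem~\ref{thm:hermcomplex}, using the Laguerre generating function
\beq\nonumber
G(x,t):=\sum_{n=0}^\infty L_n(x)t^n = \frac{e^{-xt/(1-t)}}{1-t}
\eeq
in place of $e^{2xt-t^2}$. I would apply $T$ to $G(x,t)$ formally in two ways. Because $1/(1-t)$ does not depend on $x$ and $D^k e^{-xt/(1-t)} = \bigl(-t/(1-t)\bigr)^k e^{-xt/(1-t)}$, the differential form yields $T[G(x,t)] = G(x,t)\sum_{k=0}^\infty Q_k(x)\bigl(-t/(1-t)\bigr)^k$, while the diagonal form yields $T[G(x,t)] = \sum_{n=0}^\infty \gamma_n L_n(x) t^n$. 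Since $G(x,t)$ is a unit in $\R[x][[t]]$ (its constant $t$-term equals $1$), dividing through gives
\beq\label{eq:lagplan1}
\sum_{k=0}^\infty Q_k(x)\left(\frac{-t}{1-t}\right)^k = (1-t)\, e^{xt/(1-t)}\sum_{n=0}^\infty \gamma_n L_n(x) t^n.
\eeq

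Next, I would perform the invertible formal power series substitution $s:=-t/(1-t)$, i.e.\ $t=-s/(1-s)$, under which $1-t=1/(1-s)$ and $xt/(1-t)=-xs$. Equation~\eqref{eq:lagplan1} then becomes
\beq\nonumber
\sum_{k=0}^\infty Q_k(x)\, s^k = e^{-xs}\sum_{j=0}^\infty (-1)^j\gamma_j L_j(x)\,\frac{s^j}{(1-s)^{j+1}}.
\eeq
Expanding $(1-s)^{-j-1}=\sum_{m\ge 0}\binom{m+j}{j}s^m$ and $e^{-xs}=\sum_{k\ge 0}(-x)^k s^k/k!$, then collecting the coefficient of $s^n$ under the constraint $k+j+m=n$ (so that $\binom{m+j}{j}=\binom{n-k}{j}$) produces precisely the claimed formula \eqref{eq:lagqnn}.

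The main obstacle, flagged already in Remark~\ref{rmk:careful}, is that $T[G(x,t)]$ need not converge, so the derivation is only formal. To make it rigorous, I would define $\tilde{T}:=\sum_{k=0}^\infty \tilde{Q}_k(x)D^k$ with $\tilde{Q}_k$ given by \eqref{eq:lagqnn} and verify $\tilde{T}[L_n(x)]=\gamma_n L_n(x)$ for each $n\in\N_0$ by running the chain of identities above in reverse inside $\R[x][[t]]$; then uniqueness of the differential representation (Theorem~\ref{thm:piotr}) forces $\tilde{Q}_k=Q_k$. Every step is legitimate within $\R[x][[t]]$: $G(x,t)$ is invertible, the substitution $t\mapsto -s/(1-s)$ sends $s$ to a series with zero constant term (hence is an automorphism of $\R[x][[s]]$), and each coefficient of $t^n$ receives contributions from only finitely many summands since $D^k L_n=0$ for $k>n$.
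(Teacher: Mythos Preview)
Your proposal is correct. The paper follows the same overall strategy---define $\tilde{T}$ by \eqref{eq:lagqnn} and verify $\tilde{T}[L_m(x)]=\gamma_m L_m(x)$, then appeal to uniqueness in Theorem~\ref{thm:piotr}---but carries out the verification by a direct combinatorial computation rather than via generating functions. Specifically, the paper expands $\tilde{T}[L_m(x)]$ using $L_m^{(k)}(0)=\binom{m}{k}(-1)^k$, the identity $\binom{n}{k}\binom{k}{j}=\binom{n}{j}\binom{n-j}{k-j}$, and the evaluation operator $W=\sum_{n\ge 0}\frac{(-1)^n}{n!}x^nD^n$ (which sends any polynomial to its value at~$0$), and collapses the triple sum down to $\gamma_m L_m(x)$.

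Your route instead packages the same verification as an identity in $\R[x][[t]]$: starting from the explicit $\tilde{Q}_k$ you rebuild $\sum_k\tilde{Q}_k(x)s^k$, undo the substitution $s=-t/(1-t)$, multiply by $G(x,t)$, and read off coefficients. This works because $(-t/(1-t))^k$ has $t$-order $k$, so every coefficient extraction involves only finitely many $k$; the exchange $[t^n]\sum_k\tilde{Q}_k D^kG=\sum_k\tilde{Q}_k D^kL_n=\tilde{T}[L_n]$ is then legitimate. The generating-function argument has the advantage of explaining \emph{where} the formula comes from (the change of variable $t\mapsto -s/(1-s)$ is the Laguerre analogue of the Hermite sign flip $t\mapsto -t$ in Theorem~\ref{thm:herminter}), while the paper's bare-hands calculation is more self-contained and mirrors its proof of Theorem~\ref{thm:hermcomplex} line for line.
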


\begin{proof}
The proof is very similar to the proof of Theorem \ref{thm:hermcomplex}. Define
\beq
\tilde{T}:=\sum_{n=0}^\infty Q_n(x)D^n, 
\eeq
where $Q_n(x)$ is defined from equation \eqref{eq:lagqnn}. We will establish the result by showing that $\tilde{T}[L_m(x)]=\gamma_m L_m(x)$ for every $m\in\N_0$. Define the evaluation operator, 
\beq
W:=\sum_{n=0}^\infty \frac{(-1)^n}{n!}x^nD^n. 
\eeq
Note that $W[f(x)]=f(0)$ for every polynomial $f(x)$. Using Theorem \ref{lem:zerolag} and formula $\binom{n}{k}\binom{k}{j} = \binom{n}{j}\binom{n-j}{k-j}$ (see \cite[p. 3]{Rio79}), we now evaluate $\tilde{T}$ at $L_m(x)$, 
\begin{align}
\tilde{T}[L_m(x)]
&= \sum_{n=0}^m\left(\sum_{k=0}^n \frac{(-x)^k}{k!} \sum_{j=0}^{n-k} \binom{n-k}{j}(-1)^j \gamma_j L_j(x)\right) L^{(n)}_m(x) \\
&= \sum_{j=0}^{m} (-1)^j \gamma_j L_j(x) \sum_{k=0}^m \sum_{n=0}^m \binom{n-k}{j}\frac{(-x)^k}{k!} L^{(n)}_m(x) \\
&= \sum_{j=0}^{m} (-1)^j \gamma_j L_j(x) \sum_{k=0}^m \sum_{n=0}^m \binom{k}{j}\frac{(-x)^{n-k}}{(n-k)!} L^{(n)}_m(x) \\
&= \sum_{j=0}^{m} (-1)^j \gamma_j L_j(x) \sum_{k=0}^m\binom{k}{j} \sum_{n=0}^m \frac{(-x)^{n}}{n!} L^{(n+k)}_m(x) \\
&= \sum_{j=0}^{m} (-1)^j \gamma_j L_j(x) \sum_{k=0}^m\binom{k}{j} W[ L^{(k)}_m(x)] \\
&= \sum_{j=0}^{m} (-1)^j \gamma_j L_j(x) \sum_{k=0}^m\binom{k}{j} L^{(k)}_m(0) \\
&= \sum_{j=0}^{m} (-1)^j \gamma_j L_j(x) \sum_{k=0}^m\binom{k}{j} \binom{m}{k}(-1)^k \\
&= \sum_{j=0}^{m} (-1)^j \gamma_j L_j(x) \sum_{k=0}^m\binom{m}{j} \binom{m-j}{k-j}(-1)^k \\
&= \sum_{j=0}^{m} (-1)^j \gamma_j L_j(x) \binom{m}{j}(-1)^j\sum_{k=0}^m\left(\binom{m-j}{k}(-1)^k\right) \\
&= \sum_{j=0}^{m} \binom{m}{j} \gamma_j L_j(x) \sum_{k=0}^m\binom{m-j}{k}(-1)^k \\
&= \gamma_m L_m(x) \nonumber\tag*{\qedhere}
\end{align}
\end{proof}

\vspace{.25in}
\section{Open Problems}

\begin{problem}
A frequent query in the literature is to find properties of the $Q_k$'s such that
\beq
T:=\sum_{k=0}^\infty Q_kD^k
\eeq
is hyperbolicity preserving. We ask instead a parallel question; what are the properties needed for classical diagonal differential operators, $T_n$'s, to form hyperbolicity preservers, as in 
\beq
T=\sum_{k=0}^\infty T_nD^n\ \ \ \text{?}
\eeq 
\end{problem}

\begin{problem}\label{que:que5}
Do the shifted Laguerre polynomials, $\{L_n(x-\alpha)\}_{n=0}^\infty$, possess the same property found in Theorem \ref{thm:lagissums} and Theorem \ref{thm:lag}? Generalized Laguerre? Generalized Hermite? 
\end{problem}

\begin{problem}
Find all hyperbolicity preservers that can be written as a sum of classical hyperbolicity preservers ($T=\sum_{k=0}^\infty T_kD^k$), as in Theorem \ref{thm:classic} or \ref{thm:classic2}. 
\end{problem}

\begin{problem}\label{pm:incdegtn}
Does there exist a hyperbolicity preserver of the form, 
\beq
T:=\sum_{k=-\infty}^\infty T_k D^k,
\eeq
such that $T_k\not\equiv 0$ for every $k\in\N$? Compare with the open problem on ``increasing degree'' of A. Piotrowski \cite[Problem 197, p. 172]{Pio07}.
\end{problem}

\begin{problem}
From T. Forg\'acs and A. Piotrowski \cite{FP13b} we are given an intriguing open problem. Namely, if 
\beq
T[H_n(x)]:=\left(\sum_{k=0}^\infty Q_k(x)D^k\right)H_n(x)=\gamma_n H_n(x)
\eeq
is a Hermite diagonal differential operator where $\{\gamma_n\}_{n=0}^\infty$ is a classic multiplier sequence and each $Q_k(x)$ has only real zeros, then can we conclude that $T$ is a hyperbolicity preserver (cf. Theorem \ref{thm:hermrealzeros})? 

Using the formulations throughout this paper, we pose two ideas that might prove of use to this question. First, following the method of T. Forg\'acs and A. Piotrowski we analysis the leading and second-leading coefficients of the $Q_k$'s (for the definition of $h_n$, see equation \eqref{h_n's}),  
\begin{align}
\frac{d}{dx}h_0(x)&=\sum_{k=0}^\infty \frac{Q_{k+1}^{(k+1)}(0)}{k!}x^k = e^{-x}(f'(x)-f(x)),\ \ \ \ \ \text{and} \\
(-4)\int h_1(x)dx&= (-4)\sum_{k=0}^\infty \frac{Q_{k+1}^{(k-1)}(0)}{k!}x^k =e^{-x}(f'(x)+f(x)),
\end{align}
where $f(x)=\sum_{k=0}^\infty \frac{\gamma_k}{k!}x^k$ and where we take $Q_1^{(-1)}(0):=f'(0)+f(0)$. Thus, in general we ask; if $\{\gamma_k\}_{k=0}^\infty$ is a non-increasing, positive, classic multiplier sequence, then can we conclude that $e^{-x}(f'(x)-f(x))$ and $e^{-x}(f'(x)+f(x))$ have at least one Taylor coefficient of opposite sign? 

Second, according to the Borcea-Branden Theorem \cite[Theorem 5]{BB09}, if
\beq
T:=\sum_{k=0}^\infty Q_k(x)D^k,\ \ \ \text{and,}\ \ \ W:=\sum_{k=0}^\infty \frac{Q_k(x)}{2^k}D^k, 
\eeq
then $T$ is hyperbolicity preserving if and only if $W$ is hyperbolicity preserving (see also the proof of Theorem \ref{thm:herminter}). However, if $T$ is also a Hermite diagonal differential operator, then only the hyperbolicity of $T[x^n]$ is needed to conclude that $T$ is a hyperbolicity preserver (see Theorem \ref{thm:minimal}). Can the same be said of $W$? This ``minimal set'' ($\{x^n\}_{n=0}^\infty$) that allows the conclusion of hyperbolicity preservation is a commonly sought after attribute of differential operators. We ask, what relationship do the sets $A$ and $B$ have, where
\begin{align}
A&=\left\{p(x):p(x)=\left(\sum_{k=0}^\infty Q_k(x)D^k\right)f_n(x)\right\},\ \ \ \text{and}\\
B&=\left\{p(x):p(x)=\left(\sum_{k=0}^\infty Q_k(x)\alpha^kD^k\right)f_n(x)\right\},
\end{align}
given $\{f_n(x)\}_{n=0}^\infty$ is some sequence of polynomials and $\alpha>0$? If $B$ only has hyperbolic polynomials, then must $A$ have only hyperbolic polynomials? What restrictions would allow this conditional to hold? 
\end{problem}

%
%

\vspace{.25in}
\section{acknowledgements}
The author would like to thank Dr. Csordas' Spring 2014 seminar for the helpful discussions and many inspiring suggestions that led to this paper. In particular, thanks is given to Dr. Csordas for his endless hours of advice and instruction in refining this work.

\bibliography{ODofMS_BiB}{}
\bibliographystyle{spmpsci}

%
%

\end{document}